\theoremstyle{change}
\newcommand{\AI}{{\mathcal{AI}}}
\newcommand{\A}{{\mathbb A}}
\newcommand{\Q}{{\mathbb Q}}
\newcommand{\Z}{{\mathbb Z}}
\newcommand{\R}{{\mathbb R}}
\newcommand{\C}{{\mathbb C}}
\newcommand{\bs}{\backslash}
\newcommand{\p}{\mathfrak p}
\newcommand{\eps}{\epsilon}
\newcommand{\F}{\mathfrak F}
\newcommand{\OF}{{\mathfrak o}}
\newcommand{\GL}{{\rm GL}}
\newcommand{\f}{{\rm f}}
\newcommand{\st}{{\rm st}}
\newcommand{\SL}{{\rm SL}}
\newcommand{\SO}{{\rm SO}}
\newcommand{\GSp}{{\rm GSp}}
\newcommand{\Sp}{{\rm Sp}}
\newcommand{\Ad}{{\rm Ad}}
\newcommand{\K}[1]{{\rm K}(\p^{#1})}
\newcommand{\dK}[1]{\tilde{\mathrm K}(\p^{#1})}
\newcommand{\Kl}[1]{{\rm Kl}(\p^{#1})}
\newcommand{\vol}{{\rm vol}}
\newcommand{\Ind}{{\rm Ind}}
\newcommand{\ind}{{\rm ind}}
\newcommand{\cInd}{\text{c-Ind}}
\newcommand{\Hom}{{\rm Hom}}
\newcommand{\mat}[4]{\begin{bsmallmatrix}#1&#2\\#3&#4\end{bsmallmatrix}}
\newcommand{\qed}{\hspace*{\fill}\rule{1ex}{1ex}}
\newcommand{\forget}[1]{}
\def\qdots{\mathinner{\mkern1mu\raise0pt\vbox{\kern7pt\hbox{.}}\mkern2mu
\raise3.4pt\hbox{.}\mkern2mu\raise7pt\hbox{.}\mkern1mu}}
\newenvironment{proof}{\vspace{1ex}\noindent\emph{Proof.}\hspace{0.5em}}
	{\hfill\qed\vspace{2ex}}
\newenvironment{bsmallmatrix}{\left[\begin{smallmatrix}}{\end{smallmatrix}\right]}
\newtheorem{lemma}{Lemma.}[section]
\newtheorem{theorem}[lemma]{Theorem.}
\newtheorem{corollary}[lemma]{Corollary.}
\newtheorem{proposition}[lemma]{Proposition.}
\newtheorem{remark}[lemma]{Remark.}
\newtheorem{conjecture}[lemma]{Conjecture.}
\begin{document}

\thispagestyle{empty}
\begin{center}
 {\bf\Large Simple supercuspidal representations of $\GSp_4$ and test vectors}

 \vspace{3ex}
 Ameya Pitale, Abhishek Saha, Ralf Schmidt
\end{center}

\abstract{We consider simple supercuspidal representations of $\GSp_4$ over a $p$-adic field and show that they have conductor exponent 5. We study (paramodular) newvectors and minimal vectors in these representations, obtain formulas for their matrix coefficients, and compute key local integrals involving these as test vectors.

Our local computations lead to several explicit global period formulas involving automorphic representations $\pi$ of $\GSp_4(\A)$ whose local components (at ramified primes) are simple supercuspidal representations, and where the global test vectors are chosen to be (diagonal shifts of) newforms or automorphic forms of minimal type.  As an analytic application of our work to the sup-norm problem, we show the existence of paramodular newforms on $\GSp_4(\A)$  of conductor $p^5$ that take ``large values" on a fixed compact set as $p\rightarrow \infty$.
}
%

\section{Introduction}\label{s:intro}
\subsection{Motivation}
Period formulas play an important role in the analytic and arithmetic theory of automorphic $L$-functions and have applications to several important problems in analytic number theory and quantum chaos. Given a cuspidal automorphic representation $\pi\simeq\otimes \pi_v$ of $G(\A)$ for some reductive group $G$, a key input in such a period formula is the choice of an automorphic form~$\phi$ in the space of $\pi$ corresponding to a pure tensor~$\otimes \phi_v$. To ensure that the global test vector~$\phi$ is suitable for the application at hand, one needs to choose the local test vectors $\phi_v$  carefully so that they have the necessary invariance properties and such that the corresponding local integrals appearing in the period formula are non-zero and well-controlled.
Consider for example the famous QUE
theorem proved by Lindenstrauss \cite{Lin06}, which states that as $\pi$ traverses a sequence for
which $\pi_\infty$ belongs to the principal series and $\pi_p$ is unramified at all finite primes, the
$L^2$-masses of the spherical vectors $\phi\in\pi$ corresponding to $\otimes_v \phi_v$ equidistribute. A key step in Lindenstrauss' proof of the QUE theorem involves replacing $\phi_\infty$ by a particular vector $\tilde\phi_\infty$
(the microlocal lift) at the archimedean place whose limit
measures acquire additional invariance.  Further illustration of
this principle is given by recent breakthroughs in period-based approaches to the
subconvexity problem, which depend crucially on the construction of good analytic test vectors (see e.g.\ \cite{michel-2009, nelson20, nelson21, hmn22}).

In the simplest and best-studied case of $G=\GL_2$, and $\pi_p$ a ramified representation (of $\GL_2$ over a $p$-adic field), it has been traditional  to take the local test vector $\phi_p$  to be the newvector. But for certain applications, other reasonable choices are often  more useful and
more natural. The paper \cite{HNS} considered the case when $\pi_p$ is supercuspidal and introduced a different choice of local test vector $\phi_p$, called the minimal vector, which is implicit in the
type theory approach to the construction of supercuspidal
representations. The minimal vectors for $\GL_2$ have several remarkable properties which make them good test vectors for key problems. In the last few years, this has led to an emerging theory surrounding applications of minimal vectors to the sup-norm problem \cite{HNS, sup-norm-minimal-compact}, the subconvexity problem \cite{HN-minimal}, the Kuznetsov formula \cite{Hu20}, and explicit Gross--Zagier formulas with applications to the BSD and Sylvester conjectures \cite{HSY19}.

For  higher rank groups $G$ such as $\GL_n$ and $\GSp_{2n}$, it is therefore of significant interest to find local test vectors inside ramified representations of $G$ over a $p$-adic field. A good analytic theory of such test vectors is currently lacking, even for relatively low rank cases of $G$ beyond the basic case  $G=\GL_2$. In this paper, we focus on  $G=\GSp_4$.  The Iwahori-spherical representations of $\GSp_4$ (these can be thought of as corresponding to automorphic forms of squarefree level) are well-understood and there has been a fair bit of work done \cite{Sch02, pitale-bessel, DPSS15}  on test vectors for these representations. However, our understanding of suitable test vectors is extremely limited for more ramified representations such as the supercuspidal ones. For ramified generic representations on $\GSp_4$, the theory of paramodular newvectors was developed in \cite{NF}. These paramodular newvectors  exist for generic representations of conductor $\p^n$ for each $n$ and have good uniqueness properties. However, many key local integrals, such as the local Whittaker integral appearing in the work of Lapid--Mao, have not been computed for the paramodular newvector for any $n>1$. Furthermore there are other important local integrals (such as those occuring in Gan--Gross--Prasad period formulas) for which the paramodular newvector does not even appear to be a good choice. It is therefore important to explore various choices of test vectors for $\GSp_4$ and crucially to compute the corresponding local integrals occurring in global period formulas.

This work is a first step towards the above-described goal. We define and study a particular class of supercuspidal representations of $\GSp_4$ over a $p$-adic field, known as simple supercuspidal representations. These representations are of great interest because of their ease of access, being induced from characters. We hope that the present work can serve as a stepping stone towards a more general theory.  We build a theory of minimal vectors inside these simple supercuspidal representations analogous to the one for $\GL_2$ constructed in \cite{HNS, HN-minimal}. We express the paramodular newvector in terms of the minimal vector and we compute key local integrals (with either the minimal vector or the paramodular newvector as our input test vector) leading to several explicit global period formulas. We also give an application of one of these period formulas  to the sup-norm problem in the level aspect.

In the rest of this introduction, we describe our results in more detail.

\subsection{Simple supercuspidals and local results}

Simple supercuspidal representations were originally defined for a class of simple groups over $p$-adic fields by Gross and Reeder \cite{GrossReeder2010}. In a sense  they provide the easiest construction of supercuspidals, being induced from affine generic characters of a pro-unipotent radical of a maximal compact subgroup. Knightly and Li \cite{KnightlyLi2015} extended the theory of simple supercuspidals to the case of $\GL_n$.

In Sect.~\ref{simplescsec} of this paper we develop the theory of simple supercuspidals of $\GSp_4(F)$, where $F$ is a $p$-adic field, following the theory outlined in the above papers. We construct these representations via compact induction from a character of the group $ZK'$ where $Z$ is the center and \begin{equation}
 K':=\GSp_4(F)\cap\begin{bsmallmatrix}1+\p&\OF&\OF&\OF\\\p&1+\p&\OF&\OF\\\p&\p&1+\p&\OF\\\p&\p&\p&1+\p\end{bsmallmatrix}
\end{equation}
is the pro-unipotent radical of the Iwahori subgroup. We show that these representations have conductor exponent 5, and we study two key distinguished vectors in these representations, namely the minimal vector and the newvector. In Proposition \ref{newminprop}, we write down explicitly the newvector as a linear combination of translates of the minimal vector.

We continue the local theory in Sect.~\ref{s:localprelims} where we study matrix coefficients for these vectors. A striking feature is that the matrix
coefficient associated to an $L^2$-normalized
minimal vector is a character of the supporting
subgroup $ZK'$. This allows us to explicitly compute the formal degree of a supercuspidal representation, which is done in Proposition~\ref{p:formaldeg}. The matrix coefficient of a newvector is much more complicated, but we are able to write down a reasonably explicit formula for their evaluation on the unipotent radical. Interestingly, this formula involves certain sums of hyper-Kloosterman type (see Remark~\ref{rem:kloosterman-type}) which suggests that there is substantial arithmetic information encoded by these matrix coefficients. We go on to compute various local integrals of Novodvorsky, Whittaker and Gan--Gross--Prasad type using minimal vectors or newvectors as test vectors.

\subsection{The Novodvorsky integral representation for the spinor \texorpdfstring{$L$}{}-function}
Let $\A$ be the ring of adeles of $\Q$. Let $\pi\simeq\otimes_v \pi_v$ be a globally generic, cuspidal automorphic representation of $\GSp_4(\A)$ with trivial central character. One can attach to $\pi$ the spinor (degree 4) $L$-function $L(s, \pi)$ which is equal to the $L$-function of the $\GL_4$ automorphic representation obtained by functorial transfer \cite{AS} of $\pi$ from $\GSp_4$ to $\GL_4$. More generally, one can twist by a Dirichlet character $\chi$ and define the $L$-function $L(s, \pi \times \chi)$. A key tool to understand these $L$-functions is an integral representation provided by Novodvorsky~\cite{nov79} (see also Bump~\cite{Bump87} and Takloo-Bighash~\cite{Ram-Tak}).

The above integral representation involves a global integral $Z(s, \phi, \chi)$, depending on a choice of automorphic form $\phi$ in the space of $\pi$ corresponding to a pure tensor $\otimes_v \phi_v$. The global integral $Z(s, \phi, \chi)$ factors into a product of local integrals $Z(s, W_{\phi_v}, \chi_v)$ where $W_{\phi_v}$ is the realization of $\phi_v$ in its Whittaker model. For finite $p$ the local integral $Z(s, W_{\phi_p}, \chi_p)$ equals $L(s, \pi_p \times \chi_p)$ whenever all the data is unramified. Moreover, if $\chi_p= \mathbf{1}$ is trivial,  $\pi_p$ is ramified, and $\phi_p$ is a taken to be paramodular newvector in $\pi_p$, then the local integral $Z(s, W_{\phi_p}, \mathbf{1})$ (once measures are normalized appropriately) equals $L(s, \pi_p )$. This shows that the global paramodular newform can serve as a test vector for the Novodvorsky integral representation for the spinor $L$-function in the untwisted case.

However, as explained earlier in the introduction, it is often useful to have a rich supply of test vectors for analytic applications (such as the subconvexity problem or non-vanishing of central $L$-values), because certain test vectors often work better than others for a specific application due to differing invariance properties or differing size of local integral.

We construct a new test vector $\phi_p$ for the Novodvorsky integral representation whenever $\pi_p$ is a simple supercuspidal and the conductor exponent of $\chi_p$ equals 0 or 1. The test vector $\phi_p$ is a particular diagonal translate of the minimal vector and we compute the local integral  $Z(s, W_{\phi_p}, \chi_p)$ corresponding to this test vector  in Proposition \ref{zeta-int-value-prop}.

As a consequence of this local computation, we explicitly write down a global integral representation for the spin $L$-function $L(s, \pi \times \chi)$ for a globally generic, cuspidal automorphic representation $\pi$ of $\GSp_4(\A)$ with trivial central character with each ramified local component of simple supercuspidal type, and an even Dirichlet character $\chi$ of squarefree conductor dividing that of $\pi$. We give two versions of the global formula, one where $\phi$ is Whittaker-normalized and the other where $\phi$ is $L^2$-normalized; the former is more suited for arithmetic applications and the latter more for analytic applications. For the exact statements, we refer the reader to Theorem \ref{global-novo-thm} and Corollary \ref{c:globalnov}.

\subsection{Generalized B\"ocherer's conjecture and the refined Gan--Gross--Pra\-sad period formula for \texorpdfstring{$(\SO_5, \SO_2)$}{}}
Let $f$ be a Siegel cusp form of degree 2 and weight $k$ for the group $\Sp_4(\Z)$. Assume that $f$ is a Hecke eigenform and let $d < 0$ be a fundamental discriminant.  B\"ocherer \cite{boch-conj} made a remarkable conjecture that relates the central $L$-value $L(1/2, f \times \chi_d)$ to the square of the sum of Fourier coefficients of $f$ corresponding to equivalence classes of forms of discriminant~$d$.
In a previous work with Dickson \cite{DPSS15}, we formulated an explicit generalization of B\"ocherer's conjecture by interpreting it as a special case of the refined  Gan--Gross--Prasad (GGP) period conjecture for $(\SO_5, \SO_2)$ as stated by Liu \cite{yifengliu}. The refined GGP conjecture in this special case takes the form
\begin{equation}\label{e:refggp}
  \frac{|B(\phi, \Lambda)|^2}{\langle \phi, \phi \rangle} = C \frac{L(1/2, \pi \times \AI(\Lambda^{-1}))}{L(1, \pi, \Ad)L(1, \chi_{d})} \prod_v B_{\Lambda_v, \theta_v}(\phi_v),
\end{equation}
where $\phi$, corresponding to $\otimes_v \phi_v$, is an automorphic form inside a cuspidal automorphic representation $\pi$ of $\GSp_4(\A)$, $\Lambda$ is a character of $K^\times \bs \A_K^\times$ satistfying $\Lambda|_{\A^\times} = 1$, $B(\phi, \Lambda)$ is the global Bessel period, and $C$ is a constant.

 The corresponding local Bessel integrals $B_{\Lambda_v, \theta_v}(\phi_v)$ were explicitly computed in \cite{DPSS15} in some special cases when $\Lambda_p$ is trivial and $\pi_p$ is Iwahori-spherical. This gives an explicit generalization of B\"ocherer's conjecture \cite[Thm.~1.13]{DPSS15} for certain Siegel cusp forms of squarefree level. In a couple of groundbreaking recent works, Furusawa and Morimoto \cite{FM21, FM22} have  now proved the above refined  GGP period conjecture \eqref{e:refggp}. An immediate corollary of their work is the proof of the explicit generalized Böcherer’s conjecture in the square-free level case that was formulated by us in \cite{DPSS15}. 
 
 The local Bessel integrals were also estimated in some special cases when $\Lambda_p$ is non-trivial and $\pi_p$ is unramified in \cite{CMS23} where these bounds were combined with the refined  GGP period conjecture to  bound the size of Fourer coefficients and the sup-norms of  Siegel cusp forms.

However, the local $p$-adic integrals $B_{\Lambda_v, \theta_v}(\phi_v)$ appearing in \eqref{e:refggp} have so far not been computed in \emph{any} case when  $\pi_p$ is not Iwahori-spherical. Consequently no explicit generalization of B\"ocherer’s conjecture for non square-free levels has been stated or proved. In this work, we fill this gap when $\pi_p$ is a simple supercuspidal representation.   We prove that a suitable diagonal translate of the minimal vector can be taken as a test vector $\phi_p$ for the local Bessel integral, and we compute the corresponding local integral $B_{\Lambda_p, \theta_p}(\phi_p)$ in Proposition~\ref{Bessel-model-prop}.  Our new test vector works for any character $\Lambda_p$ as long as its conductor exponent is not too small (in contrast to the result of \cite{DPSS15} which only applied to trivial $\Lambda_p$). The explicit refined global GGP period conjecture for this choice of test vector is proved in Theorem \ref{Bessel-per-thm} by combining our calculations with the recent work of Furusawa and Morimoto \cite{FM22}. Theorem \ref{Bessel-per-thm} can be reformulated in the classical language to give a proof of an explicit generalization of Böcherer’s conjecture for a certain class of forms of non square-free level, but we do not carry this out here in the interest of brevity.

\subsection{A special case of the Lapid--Mao formula}

Given a generic cuspidal automorphic form $\phi$ on $G(\A)$ for a quasi-split reductive group $G$, there are two natural ways  to specify a normalization of $\phi$. One is to set some particular Fourier--Whittaker coefficient equal to 1. For example, in the classical theory of Hecke eigenforms $f$ for $\SL_2(\Z)$, it is  natural, especially for arithmetic applications, to normalize $f$ by setting $a_f(1)=1$. This normalization ensures that all Fourier coefficients are algebraic integers. The other way to normalize $f$ is to set the Petersson norm $\langle f, f \rangle$ equal to 1, which is often useful for analytic applications.  The relation between these two normalizations is expressed by the well-known identity (see, e.g., \cite[(5.101)]{IKbook})
\begin{equation}\label{e:petgl2}
 \frac{|a_f(1)|^2}{\langle f, f \rangle} = \frac{2^k}{L(1, \pi_f, \Ad)},
\end{equation}
where $\pi_f$ is the automorphic representation generated by $f$ and $L(s, \pi_f, \Ad)$ denotes the (complete) adjoint $L$-function. The above identity is crucial for numerous applications in analytic number theory.

Lapid and Mao \cite{LM15} made a remarkable conjecture vastly generalizing \eqref{e:petgl2}. They proved the conjecture for cusp forms on $\GL_n$ using the theory of Rankin--Selberg integrals developed by
Jacquet, Piatetski-Shapiro, and Shalika. Moreover, in \cite{LM17}, they established an analogous formula in the metaplectic case.  In the special case $G=\GSp_4$, the Lapid--Mao conjecture was recently proved by Furusawa and Morimoto \cite[Theorem 6.3]{FM22} as part of their remarkable work on the refined GGP conjecture. Precisely, under the assumption that $\pi$ is a tempered automorphic representation of $\GSp_4(\A)$,  Furusawa and Morimoto proved that
\begin{equation}\label{e:lapidmaogsp4}
 \frac{|W_\phi(1) |^2   }{\langle \phi, \phi\rangle} =  2^{-c}\frac{\zeta^{S} (2) \zeta^{(S)}(4) }{L^{S}(1,  \pi, \Ad)}  \prod_{v \in S}J_0(\phi_v),
\end{equation}
where $\phi$, corresponding to $\otimes_v \phi_v$, is a cusp form in the space of $\pi$, the function $W_\phi$ is the Whittaker period  associated to $\phi$, the set $S$ consists of places such that all the local data is unramified outside~$S$, $c \in \{1,2\}$ is an integer depending on the Arthur packet of $\phi$, and $J_0(\phi_v)$ is a local Whittaker integral defined as the Whittaker coefficient of the matrix coefficient of~$\phi_v$.

However, for applications, it is often important to have an exact formula where the quantities $J_0(\phi_v)$ at the  bad places $v \in S$ are explicitly written down. So far, there has been little progress in this direction. Chen and Ichino \cite{chen-ichino} computed $J_0(\phi_v)$ at $v=\infty$ for $\pi_\infty$ a principal series or large discrete series representation and $\phi_\infty$ a vector of minimal weight. They also computed  $J_0(\phi_v)$ at a finite place $v=p$ for the $\pi_p$ with conductor exponent equal to 1, and $\phi_p$ a paramodular newvector. Apart from this, we are not aware of any other case where $J_0(\phi_v)$ has been computed.

In this work, we compute the local Whittaker integral $J_0(\phi_p)$ when $\pi_p$ is a simple supercuspidal representation and $\phi_p$ is either a diagonal translate of a minimal vector (Prop.~\ref{J0-non-vanishing}) or a paramodular newvector (Prop.~\ref{J0-prop}). These local results lead to an explicit identity between Petersson norms and Whittaker coefficients of cusp forms $\phi \in \pi$ with the above local constraints; we refer the reader to Theorem \ref{t:lapidmaoexplicit} for the exact statement of this result.

\subsection{Large values of paramodular newforms} Given  an $L^2$-normalised cuspidal automorphic form $\phi$ on some group $G$, it is of great interest to bound $\|\phi\|_\infty$ in terms of its defining parameters. This is a highly active area of research with connections to geometric analysis and mathematical physics and has seen an explosion of recent activity. Strong upper bounds for $\|\phi\|_\infty$  often imply strong \emph{subconvexity bounds} for certain $L$-functions \cite{HS19} thus linking this problem to one of the most important problems in number theory. On the other hand, large lower bounds for $\|\phi\|_\infty$ give counterexamples to the random wave model from quantum mechanics, and even more remarkably, appear to have unexpected connections with functoriality. We refer the reader to the introductions of \cite{blomer-harcos-milicevic-maga, sup-norm-minimal-compact, HS19}  for brief discussions of some of these connections and various recent results.

Our focus here is on lower bounds for the sup-norm. Normalize the measure of the underlying space $Z_G(\A)G(\Q)\bs G(\A)$ so that it has volume equal to 2 and consider a family $\F$ of $L^2$-normalized  cuspidal automorphic forms on $G$. For each $\phi \in \F$, we have a ``trivial" lower bound $\|\phi\|_\infty \ge 1$ coming from the triangle inequality.  We say that the family $\F$ takes large values if something stronger is true, namely that for each $\phi \in \F$ we have $\|\phi\|_\infty \gg_\F C(\phi)^\delta$  for some fixed $\delta>0$, where $C(\phi)$ denotes the analytic conductor of the automorphic representation attached to $\phi$.

In the literature, one finds two main sources of large values. First, large values can  arise from the unusual behaviour of certain \emph{global} lifts. For example, Rudnick and Sarnak \cite{MR1266075} discovered that among Maass forms on the group $\SO(3,1)$ with eigenvalue $\lambda$, the ones that are theta lifts from $\SL_2$ have $L^\infty$-norm of the order of magnitude at least  $\lambda^{1/2}$, in contrast to a ``typical" Maass form whose $L^\infty$-norm is expected to be at most $\lambda^\epsilon$ according to the random wave model. This has been generalized to many higher rank cases by Brumley and Marshall \cite{brumley-marshall}. In the above results, the large values are obtained in the bulk, i.e., in a fixed compact set.
One can also have a very different source for large values coming from the shape of \emph{local} Whittaker functions. This phenomenon has been explored by  Templier \cite{templier-large}, the second-named author \cite{sahasupwhittaker, HNS}, Assing \cite{assing18} and Brumley--Templier \cite{brumley-templier}. In all these cases, the large values are obtained near the cusp.

In this work, we demonstrate a new phenomenon. We show that certain paramodular newforms take large values in the conductor aspect in a fixed compact set, \emph{despite} their source being the behaviour of the local Whittaker integrals associated to paramodular newvectors. More precisely, we prove the following theorem.

\begin{theorem}[Theorem \ref{t:largevalues}]\label{t:largeintro}
Let $\pi\simeq\otimes_v \pi_v$ be an irreducible, unitary, cuspidal, globally generic automorphic representation of $\GSp_4(\A)$ with trivial central character such that $\pi_p$ is a simple supercuspidal representation for each prime $p$ where it is ramified. Let $N = \prod_{p|N} p^5$ be the conductor of $\pi$, and assume that $\pi_\infty$ is a  discrete series representation.
$K(p^{a(\pi_p)})$ at each prime $p$.
Let $\phi$ be an automorphic form in the space of $\pi$. Suppose that $\phi$ is a newform with respect to the paramodular subgroup of level $N$ and $\phi_\infty$ is a lowest weight vector in~$\pi_\infty$.
Then we have $$\sup_{g \in U(\Q) \bs U(\A)}\frac{|\phi(g)|}{\langle \phi, \phi \rangle^{1/2}}  \gg_{\pi_\infty, \eps} N^{1/2 - \eps} $$ where $U$ is the unipotent radical of the Borel parabolic (so that $U(\Q) \bs U(\A)$ is compact).
\end{theorem}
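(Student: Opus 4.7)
\emph{Proof plan.} The strategy is to bound the sup-norm from below by an $L^2$-mass of $\phi$ on the compact cycle $U(\Q)\bs U(\A)$, bound this mass from below by the modulus squared of a single Whittaker value, and finally apply the explicit Lapid--Mao formula (Theorem~\ref{t:lapidmaoexplicit}) together with the explicit evaluation of $J_0(\phi_p)$ at the ramified primes from Proposition~\ref{J0-prop}.

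Since $U(\Q)\bs U(\A)$ is compact, we have
$$\sup_{g \in U(\Q)\bs U(\A)}|\phi(g)|^2 \ge \vol(U(\Q)\bs U(\A))^{-1}\int_{U(\Q)\bs U(\A)}|\phi(g)|^2\,dg.$$
Because $U$ is abelian, Parseval's identity expresses this integral as $\sum_{S}|a_S(\phi)|^2$ indexed by rational symmetric $2\times 2$ matrices $S$, where $a_S(\phi)=\int_{U(\Q)\bs U(\A)}\phi(u)\psi_S(u)^{-1}du$. Retaining a single non-degenerate $S$ matched to the local data of $\pi_\infty$ and of the $\phi_p$, and applying the standard unfolding of Siegel Fourier coefficients of generic $\GSp_4$ cusp forms into Whittaker translates, one obtains $|a_S(\phi)| \gg |W_\phi(t_0)|$ for a specific diagonal element $t_0 = t_0(S)$. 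Using the local transformation behavior of $W_\phi$ at every place---namely the lowest-weight condition at infinity and the explicit newvector Whittaker values at the ramified primes from Sect.~\ref{s:localprelims}---the quantity $|W_\phi(t_0)|$ is comparable to $|W_\phi(1)|$ up to multiplicative constants bounded uniformly in $N$.

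Combining with Theorem~\ref{t:lapidmaoexplicit} yields
$$\frac{|W_\phi(1)|^2}{\langle\phi,\phi\rangle} = C_{\pi_\infty}\,\frac{\zeta^{S}(2)\zeta^{(S)}(4)}{L^{S}(1,\pi,\Ad)}\prod_{p\mid N}J_0(\phi_p).$$
By Proposition~\ref{J0-prop}, each $J_0(\phi_p)$ at a simple supercuspidal prime of conductor exponent $5$ is explicit, positive, and of size $\asymp p$ up to bounded factors, so $\prod_{p\mid N}J_0(\phi_p)\gg N$. Standard convexity-type bounds in the level aspect give $L^{S}(1,\pi,\Ad)\ll_\eps N^\eps$, and the remaining global factors are $O(N^\eps)$. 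Hence $|W_\phi(1)|^2/\langle\phi,\phi\rangle \gg_{\pi_\infty,\eps} N^{1-\eps}$, and concatenating the inequalities above gives $\sup|\phi|/\langle\phi,\phi\rangle^{1/2}\gg_{\pi_\infty,\eps} N^{1/2-\eps}$, as claimed.

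The main obstacle will be the unfolding in the middle paragraph: one must choose the non-degenerate $S$ so that the associated Whittaker translate $W_\phi(t_0)$ is provably nonzero for a paramodular newvector at each simple supercuspidal prime, and verify that the local constants arising in the comparison between $|W_\phi(t_0)|$ and $|W_\phi(1)|$ do not absorb the $N$-gain coming from Proposition~\ref{J0-prop}. The adjoint $L$-value bound and the archimedean analysis, by contrast, are routine.
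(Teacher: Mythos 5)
The paper's proof is much more direct than your plan, and your plan contains a genuine gap in the middle step.

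The key observation you are missing is that $W_\phi(1)$ is \emph{already} an integral of $\phi$ over the compact set $U(\Q)\bs U(\A)$:
$$
W_\phi(1) = \int_{U(\Q)\bs U(\A)}\phi(u)\,\psi_U^{-1}(u)\,du,
$$
so by the triangle inequality alone one has
$\bigl|W_\phi(1)\bigr|\le \vol\bigl(U(\Q)\bs U(\A)\bigr)\cdot\sup_{u}|\phi(u)|$. That single line replaces your entire first and second paragraphs: there is no need for Cauchy--Schwarz, no $L^2$-mass, no Parseval, no Fourier--coefficient bookkeeping, and no comparison of $W_\phi(t_0)$ with $W_\phi(1)$. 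After that, the paper simply plugs into Theorem~\ref{t:lapidmaoexplicit}, uses Proposition~\ref{J0-prop} (each $J_0(\phi_p)\asymp p^5$, giving a product $\gg N^{1-\eps}$, not merely $\gg N$), appeals to \cite{Li} for $L^S(1,\pi,\Ad)\ll_\eps N^\eps$, and observes $J_\infty\gg_{\pi_\infty}1$.

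The genuine gap in your plan is the unfolding inequality $|a_S(\phi)|\gg|W_\phi(t_0)|$. For a generic cusp form on $\GSp_4$, the Fourier coefficient $a_S(\phi)$ attached to a non-degenerate symmetric $S$ expands as a \emph{lattice sum} of Whittaker values (over a $\Q$-rational torus orbit), so the relation goes the other way: one gets an upper bound on $|a_S(\phi)|$ by a sum of Whittaker moduli, and extracting a lower bound from a single term requires controlling cancellation across the whole lattice. Nothing in the paper gives you that, and indeed proving it would be a separate (hard) problem. A second, more minor, issue: the group $U$ used throughout the paper (cf.\ the definition of $\psi_U$ and of $W_\phi$ in Sect.~\ref{notations-sect}, and the set $\F_N$) is the full unipotent radical of the Borel, which is non-abelian; the phrase ``Siegel parabolic'' in the introductory statement is a slip. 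So the Parseval-over-symmetric-matrices setup does not literally match the $U$ in question, and would have to be replaced by the non-abelian Fourier expansion, compounding the issue above. Your back-end of the argument (applying Theorem~\ref{t:lapidmaoexplicit}, Proposition~\ref{J0-prop}, and the adjoint $L$-value bound) agrees with the paper.
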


We note that trace formula arguments imply that infinitely many paramodular newforms of the type considered in Theorem \ref{t:largeintro} exist   (see, e.g., \cite[Thm.~1.2]{kim-shin-temp}).   In Theorem \ref{t:largevalues} we prove a somewhat more general result where we give an explicit compact subset of $U(\R)$ where the large values are attained, include the case where $\pi_p$ has conductor exponent 1~or~5, and allow $\pi_\infty$ to be a principal series representation. We suspect that the large value phenomenon holds for any family of paramodular newforms of $\GSp_4$ whose conductors tend to infinity.

\subsection{Structure of the paper}In Sect. \ref{simplescsec}, we introduce the local notations and develop the theory of simple supercuspidal representations. In Sect. \ref{s:localprelims} we compute various local matrix coefficients and local integrals. We apply these results in Sect. \ref{s:global} to prove our main global results.

\subsection{Acknowledgements}A.S. acknowledges the support of the Engineering and Physical Sciences Research Council (grant number EP/T028343/1). We thank the anonymous referee for a careful reading of the manuscript and for many helpful comments, which have improved this paper.
\section{Simple supercuspidals}\label{simplescsec}
In this section we develop the theory of simple supercuspidal representations of $\GSp_4$, following the theory of simple supercuspidals of $\GL_n$ outlined in \cite{KnightlyLi2015}. In Sect.~\ref{paravecsec} we show that simple supercuspidals with trivial central character have conductor exponent $5$, and exhibit the local paramodular newvector in the standard model of these representations. In Sect.~\ref{primsec} we express the newvector in terms of another distinguished element called the \emph{minimal vector}.
\subsection{Basic local notations}\label{basiclocalnotsec}
Throughout Sects.~\ref{simplescsec} and \ref{s:localprelims}, $F$ will be a non-archimedean local field of characteristic zero. Let $\OF$ be the ring of integers of $F$, with maximal ideal $\p$ and uniformizer $\varpi$. Let $\mathbf{k}=\OF/\p$ be the residue class field, and $q$ its cardinality. For $x \in F$, let $| x |$ denote the normalized absolute value of $x$, so that $|\varpi| = q^{-1}$.  Let $\psi$ be a character of $F$ which is trivial on $\OF$ but non-trivial on $\p^{-1}$. Let $\psi_0$ be the character of $F$ defined by $\psi_0(x) = \psi(\varpi^{-1} x)$, so that $\psi_0$ is non-trivial on $\OF$ but trivial on~$\p$. Hence $\psi_0$ induces a non-trivial character of $\mathbf{k}$.

We use the Haar measure $dx$ on $F$ that assigns $\OF$ volume 1, and we use the Haar measure $d^\times x$ on $F^\times$ that assigns $\OF^\times$ volume 1. So we have $d^\times x= (1-q^{-1})^{-1} \frac{ dx}{|x|}.$

Let
\begin{equation}\label{gsp4defeq}
 G=\GSp_4(F)=\{g\in\GL_4(F):\ ^tgJg=\mu(g)J\text{ for some }\mu(g)\in F^\times\},\quad J=\begin{bsmallmatrix}&&&1\\&&1\\&-1\\-1\end{bsmallmatrix}.
\end{equation}
The elements
\begin{equation}\label{s1s2defeq}
 s_1=\begin{bsmallmatrix}&1\\1\\&&&1\\&&1\end{bsmallmatrix},\qquad s_2=\begin{bsmallmatrix}1\\&&1\\&-1\\&&&1\end{bsmallmatrix}
\end{equation}
represent generators for the $8$-element Weyl group of $G$. Let $K=\GSp_4(\OF)$ be the standard hyperspecial maximal compact subgroup of $G$.

Let $N'$ be the normalizer of the diagonal subgroup $M$ of $G$. Then
\begin{equation}\label{Aetatheoremeq1}
 N'=\bigsqcup_{w\in W}wM,
\end{equation}
where $W=\{1,s_1,s_2,s_1s_2,s_2s_1,s_1s_2s_1,s_2s_1s_2,s_1s_2s_1s_2\}$ represents the 8 elements of the Weyl group. The affine Bruhat decomposition implies that
\begin{equation}\label{Aetatheoremeq2}
 G=K'N'K',
\end{equation}
where
\begin{equation}\label{Kpdefeq}
 K':=G\cap\begin{bsmallmatrix}1+\p&\OF&\OF&\OF\\\p&1+\p&\OF&\OF\\\p&\p&1+\p&\OF\\\p&\p&\p&1+\p\end{bsmallmatrix}.
\end{equation}

For a representation $\pi$ of $G$ on a space $V$, when the representation $\pi$ is clear from the context, we will often use the shorthand $gv$ or $g\cdot v$ to denote $\pi(g)v$.  We  use $V_\pi$ to denote the space of~$\pi$.

For $\alpha, \beta \in F^\times$,  define the element $d_{\alpha, \beta}  \in M$ by
\begin{equation}\label{dalhabeta}
 d_{\alpha, \beta} := {\rm diag}(\alpha^2\beta, \alpha \beta, \alpha, 1).
\end{equation}

For each non-negative integer $n$ we define the Klingen congruence subgroup $\Kl{n}$ of level $n$ by
\begin{equation}\label{klingendefeq}
 \Kl{n} =\{k \in G\cap\begin{bsmallmatrix} \OF&\OF&\OF&\OF\\ \p^n &\OF&\OF&\OF\\ \p^n &\OF&\OF&\OF \\ \p^n &\p^n&\p^n&\OF \end{bsmallmatrix}:\, \det(k) \in \OF^\times\}
\end{equation}
and the paramodular group $\K{n}$ of level $n$ by
\begin{equation}\label{paradefeq}
 \K{n} =\left\{k \in G\cap\begin{bsmallmatrix} \OF&\OF&\OF&\p^{-n}\\ \p^n &\OF&\OF&\OF\\ \p^n &\OF&\OF&\OF \\ \p^n &\p^n&\p^n&\OF \end{bsmallmatrix}:\, \det(k) \in \OF^\times \right\}.
\end{equation}

\subsection{Affine generic characters}\label{affgencharsec}
Let $Z\cong F^\times$ be the center of $G$ and let $H=ZK'$. We fix a character $\omega$ of $Z$, trivial on $1+\p$. For $t_1,t_2,t_3\in\OF^\times$, we define a character $\chi:H\to\C^\times$ by
\begin{equation}\label{chidefeq}
 \chi(z\begin{bsmallmatrix} *&r_1&*&*\\ *&*&r_2&*\\ *&*&*&*\\\varpi r_3&*&*&*\end{bsmallmatrix})=\omega(z)\psi_0(t_1r_1+t_2r_2+t_3r_3).
\end{equation}
Such $\chi$ are called \emph{affine generic characters}. For fixed $\omega$, there are $(q-1)^3$ affine generic characters, corresponding to the choices of $t_1,t_2,t_3$ modulo $1+\p$. We sometimes write $\chi_{t_1,t_2,t_3}$ instead of $\chi$.

The group $M\cap K$ normalizes $K'$, and hence acts on the set of affine generic characters. For $m\in M\cap K$, let $\chi^m(x)=\chi(mxm^{-1})$. If $m={\rm diag}(a,b,cb^{-1},ca^{-1})$, then
\begin{equation}\label{chimactioneq}
 (\chi_{t_1,t_2,t_3})^m=\chi_{t_1ab^{-1},t_2b^2c^{-1},t_3a^{-2}c}.
\end{equation}
By choosing $m={\rm diag}(1,t_1,t_1t_2,t_1^2t_2)$, we see that the orbit of $\chi$ contains a character of the form $\chi_{1,1,t}$, and in fact a unique such character. Hence there are exactly $q-1$ orbits of affine generic characters with a fixed $\omega$.
\begin{lemma}\label{chimactionlemma}
 Let $t_1,t_2,t_3,\ell_1,\ell_2,\ell_3$ be elements of $\OF^\times$. Then $\chi_{t_1,t_2,t_3}$ and $\chi_{\ell_1,\ell_2,\ell_3}$ lie in the same $M\cap K$-orbit if and only if $t_1^2t_2t_3=\ell_1^2\ell_2\ell_3$ as elements of $\mathbf{k}^\times$.
\end{lemma}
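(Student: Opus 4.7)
The plan is straightforward: I would verify that $t_1^2 t_2 t_3 \in \mathbf{k}^\times$ is an invariant of the $M\cap K$-orbit, and then use the reduction to a canonical form $\chi_{1,1,t}$ that is already discussed in the paragraph preceding the lemma.

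First I would establish the ``only if'' direction by direct substitution into \eqref{chimactioneq}. If $m={\rm diag}(a,b,cb^{-1},ca^{-1})\in M\cap K$ sends $\chi_{t_1,t_2,t_3}$ to $\chi_{\ell_1,\ell_2,\ell_3}$, then $\ell_1=t_1ab^{-1}$, $\ell_2=t_2b^2c^{-1}$, $\ell_3=t_3a^{-2}c$. Multiplying these gives
\[
\ell_1^2\ell_2\ell_3=t_1^2a^2b^{-2}\cdot t_2b^2c^{-1}\cdot t_3a^{-2}c=t_1^2t_2t_3,
\]
so the product $t_1^2t_2t_3$ is literally fixed (not only modulo $1+\p$). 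Reducing mod $\p$ gives the claimed equality in $\mathbf{k}^\times$.

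For the ``if'' direction I would invoke the computation already recorded in the text: choosing $m={\rm diag}(1,t_1,t_1t_2,t_1^2t_2)$, one has $(\chi_{t_1,t_2,t_3})^m=\chi_{1,1,t_1^2t_2t_3}$, and similarly $\chi_{\ell_1,\ell_2,\ell_3}$ lies in the same orbit as $\chi_{1,1,\ell_1^2\ell_2\ell_3}$. Since $\chi$ depends on its parameters only through their images in $\mathbf{k}^\times$ (because $\psi_0$ is trivial on $\p$), the hypothesis $t_1^2t_2t_3=\ell_1^2\ell_2\ell_3$ in $\mathbf{k}^\times$ forces $\chi_{1,1,t_1^2t_2t_3}=\chi_{1,1,\ell_1^2\ell_2\ell_3}$, and the two orbits coincide.

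The only remaining delicate point -- and the one mildest obstacle -- is justifying the uniqueness claim made in the preceding paragraph, which is needed to ensure that distinct values of $t_1^2t_2t_3\in\mathbf{k}^\times$ really give distinct orbits. This follows immediately from the invariance established above: if $\chi_{1,1,t}$ and $\chi_{1,1,t'}$ lie in the same orbit, then applying the invariant gives $t=t'$ in $\mathbf{k}^\times$. So the map $\chi_{t_1,t_2,t_3}\mapsto t_1^2t_2t_3\in\mathbf{k}^\times$ descends to a bijection from $M\cap K$-orbits of affine generic characters (with fixed $\omega$) onto $\mathbf{k}^\times$, which is the content of the lemma.
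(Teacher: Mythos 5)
Your proof is correct and is the natural elaboration of the paper's terse ``Easy to see from \eqref{chimactioneq}'': you verify that $t_1^2t_2t_3$ modulo $\p$ is preserved by the action, and conversely reduce to the canonical form $\chi_{1,1,t}$ as in the paragraph before the lemma. One very minor imprecision: from $(\chi_{t_1,t_2,t_3})^m=\chi_{\ell_1,\ell_2,\ell_3}$ you may only conclude $\ell_1\equiv t_1ab^{-1}$, $\ell_2\equiv t_2b^2c^{-1}$, $\ell_3\equiv t_3a^{-2}c$ modulo $1+\p$ (since the $\chi$'s depend on the parameters only through their classes in $\mathbf{k}^\times$), so the product identity $\ell_1^2\ell_2\ell_3=t_1^2t_2t_3$ holds a priori only modulo $1+\p$ rather than ``literally''; but that is exactly what the lemma asserts, so the conclusion stands.
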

\begin{proof}
Easy to see from \eqref{chimactioneq}.
\end{proof}

For $\chi=\chi_{t_1,t_2,t_3}$, let
\begin{equation}\label{gchidefeq}
 g_\chi=\begin{bsmallmatrix}0&0&1&0\\0&0&0&-1\\-\varpi t_2/t_3&0&0&0\\0&\varpi t_2/t_3&0&0\end{bsmallmatrix}.
\end{equation}
Note that $g_\chi$ equals ${\rm diag}(1,1,-t_2/t_3,-t_2/t_3)$ times the Atkin-Lehner element $u_1$, where
\begin{equation}\label{undefeq}
 u_n=\begin{bsmallmatrix}&&1\\&&&-1\\\varpi^n\\&-\varpi^n\end{bsmallmatrix}
\end{equation}
is the usual Atkin-Lehner element of level $n$. An easy calculation confirms that $g_\chi$ normalizes $K'$ and $H$, and that
\begin{equation}\label{chigchieq}
 \chi(g_\chi^{-1}hg_\chi)=\chi(h)\qquad\text{for all }h\in H.
\end{equation}
Observe that $g_\chi^2=-\varpi\frac{t_2}{t_3}I_4$.
\subsection{The induced representation}
Given an affine generic character $\chi$, define
\begin{equation}\label{pichidefeq}
 \pi_\chi=\cInd^G_H(\chi).
\end{equation}
The standard model $A_\chi$ of $\pi_\chi$ consists of smooth functions $f:G\to\C$ with the transformation property $f(hg)=\chi(h)f(g)$ for $g\in G$ and $h\in H$, which are compactly supported modulo $Z$. Note that $\pi_\chi$ is a representation for which the center acts via the character $\omega$.
\begin{proposition}\label{supercuspidalprop}
 Any irreducible subrepresentation of $\pi_\chi$ is supercuspidal.
\end{proposition}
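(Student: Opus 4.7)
The plan is to verify Casselman's criterion: an irreducible smooth representation $\sigma$ of $G$ is supercuspidal if and only if its Jacquet module $\sigma_U$ vanishes for the unipotent radical $U$ of every proper standard parabolic subgroup. Since the Jacquet functor is exact and any such $\sigma$ embeds in $\pi_\chi$, it suffices to show $(\pi_\chi)_U = 0$ for $U$ the unipotent radical of each of the Borel, Siegel, and Klingen parabolics.

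Fix such a $U$. To prove $(\pi_\chi)_U = 0$, I would show that for every $v \in V_{\pi_\chi}$ there exists a compact open subgroup $U_0 \subset U$ with $\int_{U_0} \pi_\chi(u) v \, du = 0$. Since $v \in \cInd_H^G(\chi)$ has compact support modulo $H$, the function $g \mapsto \int_{U_0} v(gu)\, du$ vanishes outside a compact set modulo $H$, so it is enough to verify the vanishing of this integral for $g$ in a fixed compact subset of $G$ modulo $H$. The key ingredient is the following averaging identity: whenever $u_0 \in U \cap g^{-1}Hg$ satisfies $\chi(gu_0g^{-1}) \neq 1$, choosing $U_0$ compact open in $U$, containing $u_0$, and stable under translation by $u_0$ yields
\begin{equation*}
\int_{U_0} v(gu)\, du = \chi(gu_0g^{-1}) \int_{U_0} v(gu)\, du,
\end{equation*}
forcing the integral to vanish. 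A standard compactness argument then produces a single $U_0$ that simultaneously works for all $g$ in the relevant compact set.

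It remains to establish the essential claim: for every $g \in G$ and every proper parabolic unipotent $U$, there exists $u_0 \in U \cap g^{-1}Hg$ with $\chi(gu_0g^{-1}) \neq 1$. Using the affine Bruhat decomposition \eqref{Aetatheoremeq2}, one writes $g = k_1 w m k_2$ with $k_1, k_2 \in K'$, $w$ a Weyl representative, and $m \in M$; since $K'$ is normal in the Iwahori and $M$ normalizes $U$, this reduces the problem to a finite check over the eight Weyl representatives of \eqref{s1s2defeq} and the three proper standard parabolics. In each case one verifies by inspection that $wUw^{-1} \cap K'$ contains at least one of the four simple affine root subgroups supporting the entries $r_1, r_2, \varpi r_3$ in \eqref{chidefeq}, on which $\chi$ is nontrivial by affine genericity.

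The main obstacle is this case-by-case analysis together with the bookkeeping of how conjugation by $k_1, k_2$, and $m$ twists the character $\chi$. Although such conjugation may replace $\chi$ by another affine generic character (compare \eqref{chimactioneq}), it does not destroy nontriviality on the relevant root subgroups, so the conclusion persists. Executing this step carefully requires tracking the interaction between the root system of $\GSp_4$, the Weyl elements, and the explicit entry-by-entry description of $K'$ in \eqref{Kpdefeq}.
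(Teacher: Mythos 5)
Your proposal follows a genuinely different route from the paper. The paper simply cites Proposition~3.1 of Knightly--Li~\cite{KnightlyLi2015}, whose underlying argument is the classical matrix-coefficient criterion: since $H=ZK'$ is open and compact modulo $Z$, every matrix coefficient of $\pi_\chi=\cInd_H^G(\chi)$ is compactly supported modulo the center (the coefficient attached to $f_0$ is literally supported on $H$), and a standard theorem then says that any irreducible subrepresentation of an admissible representation with such a matrix coefficient is supercuspidal. That route is a two-line reduction to a textbook fact. Your approach instead verifies Jacquet-module vanishing directly, which is a legitimate alternative in principle, and your observation that by linearity one may assume $v$ is supported on a single coset $Hg_0$ (so that the left $\chi$-equivariance of the averaged function makes the ``uniform $U_0$'' issue disappear) is sound. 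The real substance, though, is hidden in the ``essential claim'' that for every $g$ and every proper parabolic unipotent $U$ there is $u_0\in U\cap g^{-1}Hg$ with $\chi(gu_0g^{-1})\neq 1$, and this is where the proposal is incomplete.

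The sketched reduction via the affine Bruhat decomposition does not work as stated. Writing $g=k_1 w m k_2$ with $k_1,k_2\in K'$ and $m\in M$, conjugating by $k_1$ is harmless because $\chi$ is a character of $H\ni k_1$, and conjugating by $m$ is harmless because $M$ normalizes $U$. But $k_2\in K'$ does \emph{not} normalize $U$: $k_2 U k_2^{-1}\neq U$ in general, so the element $u_0$ found for $g'=k_1wm$ gives $u_0'=k_2^{-1}u_0k_2\in k_2^{-1}Uk_2$, which need not lie in $U$. The parenthetical justification (``$K'$ is normal in the Iwahori and $M$ normalizes $U$'') does not address this; normality of $K'$ in $I$ says nothing about conjugating $U$. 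A cleaner reduction is available via Iwasawa: writing $g=kp$ with $k\in K$ and $p$ in the parabolic whose unipotent radical is $U$, one gets $gUg^{-1}=kUk^{-1}$, so it suffices to treat $g\in K$; but one still has to show that $\chi$ is nontrivial on $kUk^{-1}\cap K'$ for every $k\in K$, and that is exactly the hard part. As written, the proposal asserts this crucial nontriviality as a ``finite check by inspection'' without carrying it out and with a reduction that does not legitimately make the check finite. (Also, a small slip: there are three relevant affine simple root directions $r_1,r_2,\varpi r_3$, not four.) I would recommend either filling in the nontriviality argument carefully --- e.g.\ by passing to the quotient $K'/K''$ on which $\chi$ factors and analyzing the image of $kUk^{-1}\cap K'$ there --- or simply switching to the matrix-coefficient argument, which sidesteps all of this.
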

\begin{proof}
See the proof of Proposition 3.1 of \cite{KnightlyLi2015}.
\end{proof}

For an affine generic character $\eta$, let
\begin{equation}\label{Aetadefeq}
 A^\eta=\{f\in A_\chi:\,\pi_\chi(h)f=\eta(h)f\text{ for all }h\in H\}.
\end{equation}
A non-zero element of $A^\chi$ is given by
\begin{equation}\label{f0defeq}
 f_0(h)=\begin{cases}
         \chi(h)&\text{if }h\in H,\\
         0&\text{otherwise.}
        \end{cases}
\end{equation}
\begin{proposition}\label{etachiprop}
 Let $\chi$ and $\eta$ be affine generic characters. Suppose $\phi\in A^\eta$. If $\phi(x)\neq0$, then
 \begin{equation}\label{etachipropeq1}
  \eta(h)=\chi(xhx^{-1})\qquad\text{for all }h\in H\cap x^{-1}Hx.
 \end{equation}
 This condition is independent of the choice of representative $x$ for the double coset $HxH$. Conversely, if $x\in G$ is any element satisfying \eqref{etachipropeq1}, then there exists a unique element $\phi_x\in A^\eta$ supported on $HxH$ and satisfying $\phi_x(x)=1$.

 An element $x\in G$ satisfies \eqref{etachipropeq1} if and only if $g_\chi x$ satisfies \eqref{etachipropeq1}. For such $x$, the set
 \begin{equation}\label{etachipropeq2}
  \{\phi_x,\phi_{g_\chi x}\}\subset A^\eta
 \end{equation}
 is linearly independent.
\end{proposition}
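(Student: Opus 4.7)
The plan is to run the standard Mackey-theoretic analysis for the compactly induced representation $\pi_\chi=\cInd_H^G(\chi)$, which acts by right translation; under this action, $\phi\in A^\eta$ is equivalent to $\phi(h_1gh_2)=\chi(h_1)\eta(h_2)\phi(g)$ for all $h_1,h_2\in H$ and $g\in G$. For the first assertion, assume $\phi(x)\neq 0$ and $h\in H\cap x^{-1}Hx$. Evaluating $\phi(xh)$ in two ways, first as $\eta(h)\phi(x)$ via right $H$-equivariance and then as $\phi((xhx^{-1})x)=\chi(xhx^{-1})\phi(x)$ using $xhx^{-1}\in H$, and dividing by $\phi(x)$, yields \eqref{etachipropeq1}. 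Independence of the representative $x\in HxH$ is a short check: replacing $x$ by $h_1xh_2$ conjugates the intersection $H\cap x^{-1}Hx$ by $h_2^{-1}$, and, using the character property of $\chi$ and $\eta$, the condition at a new element $h_2^{-1}h''h_2$ collapses to the original condition at $h''$.

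For the converse, given $x$ satisfying \eqref{etachipropeq1}, I define $\phi_x$ on $HxH$ by $\phi_x(h_1xh_2):=\chi(h_1)\eta(h_2)$ and extend by zero to $G$. The only point to verify is well-definedness: if $h_1xh_2=h_1'xh_2'$, then $h_1'^{-1}h_1=x(h_2'h_2^{-1})x^{-1}$, so $h_0:=h_2'h_2^{-1}$ lies in $H\cap x^{-1}Hx$, and \eqref{etachipropeq1} gives $\chi(h_1'^{-1}h_1)=\chi(xh_0x^{-1})=\eta(h_0)$, which is exactly the required equality. Smoothness, compact support modulo $Z$ (since $H\supset Z$), and the two transformation laws are immediate from the formula. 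Uniqueness is clear since any element of $A^\eta$ supported on $HxH$ is determined by its value at~$x$.

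The $g_\chi$-invariance assertion is formal: since $g_\chi$ normalizes $H$, we have $(g_\chi x)^{-1}H(g_\chi x)=x^{-1}Hx$, and for $h\in H\cap x^{-1}Hx$ the element $y:=xhx^{-1}$ is in $H$, so \eqref{chigchieq} gives
\[
\chi\bigl((g_\chi x)h(g_\chi x)^{-1}\bigr)=\chi(g_\chi y g_\chi^{-1})=\chi(y)=\chi(xhx^{-1});
\]
the converse direction is the same argument applied to $g_\chi^2\in Z\subset H$. For the linear independence, the cleanest route is to prove that $HxH$ and $Hg_\chi xH$ are always distinct double cosets, so that $\phi_x$ and $\phi_{g_\chi x}$ have disjoint support. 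I would argue by parity of the symplectic multiplier $\mu\colon G\to F^\times$: from $g_\chi^2=-\varpi(t_2/t_3)I_4$ one extracts $\mu(g_\chi)\in\varpi\cdot\OF^\times$, which has odd valuation, whereas every $h=zk'\in ZK'$ satisfies $\mu(h)=z^2\mu(k')\in(F^\times)^2\cdot\OF^\times$, which has even valuation. If $Hg_\chi xH=HxH$, then $g_\chi=h_1\,(xh_2x^{-1})$ for some $h_1,h_2\in H$, and the conjugation-invariance of $\mu$ forces $\val(\mu(g_\chi))=\val(\mu(h_1))+\val(\mu(h_2))$ to be even, contradiction. The main obstacle is precisely this parity step; everything else is the standard Mackey calculus for induction from an open subgroup containing the center.
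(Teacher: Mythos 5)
Your proof is correct and is essentially the argument that the paper invokes by citing Proposition~3.3 of Knightly--Li: the standard Mackey-style analysis of intertwining data for a compactly induced representation, with double-coset well-definedness proved exactly as you do, plus a valuation-of-$\mu$ parity argument (the $\GSp_4$ analogue of Knightly--Li's determinant-valuation argument for $\GL_n$) to show $HxH\neq Hg_\chi xH$ and hence disjoint supports of $\phi_x,\phi_{g_\chi x}$. Your remark that the converse direction of the $g_\chi$-invariance ``uses $g_\chi^2\in Z$'' is superfluous --- your computation already shows the two conditions are identical equations --- but it causes no harm.
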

\begin{proof}
See the proof of Proposition 3.3 of \cite{KnightlyLi2015}.
\end{proof}

\begin{theorem}\label{Aetatheorem}
 Let $t_1,t_2,t_3,\ell_1,\ell_2,\ell_3$ be elements of $\OF^\times$. Let $\chi=\chi_{t_1,t_2,t_3}$ and $\eta=\chi_{\ell_1,\ell_2,\ell_3}$. Let $A=A_\chi$ be the standard model of $\pi_\chi$. Then the following are equivalent.
 \begin{enumerate}
  \item $A^\eta\neq0$
  \item $t_1^2t_2t_3=\ell_1^2\ell_2\ell_3$ as elements of $\mathbf{k}^\times$.
  \item $\chi$ and $\eta$ are in the same $M\cap K$-orbit.
 \end{enumerate}
 If these conditions hold, then $\chi^{m_0}=\eta$, where
 \begin{equation}\label{Aetatheoremeq15}
  m_0={\rm diag}\Big(1,\frac{t_1}{\ell_1},\frac{t_1t_2}{\ell_1\ell_2},\frac{t_1^2t_2}{\ell_1^2\ell_2}\Big).
 \end{equation}
 Furthermore, the elements $\phi_{m_0}$ and $\phi_{g_\chi m_0}$ in \eqref{etachipropeq2} form a basis of $A^\eta$.
\end{theorem}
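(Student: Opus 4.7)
The plan is to prove the equivalences in a cycle and then deduce the basis assertion. The equivalence (ii) $\Leftrightarrow$ (iii) is immediate from Lemma~\ref{chimactionlemma}. For (iii) $\Rightarrow$ (i), I would first verify, by a direct substitution into \eqref{chimactioneq} together with the assumption $t_1^2 t_2 t_3 = \ell_1^2 \ell_2 \ell_3$, that the element $m_0$ defined in \eqref{Aetatheoremeq15} lies in $M \cap K$ and satisfies $\chi^{m_0} = \eta$. Since $m_0 \in M \cap K$ normalizes both $K'$ and $Z$, it normalizes $H$; consequently $H \cap m_0^{-1} H m_0 = H$ and condition \eqref{etachipropeq1} with $x = m_0$ reduces to $\eta = \chi^{m_0}$, which holds. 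Proposition~\ref{etachiprop} then yields a non-zero $\phi_{m_0} \in A^\eta$, proving (i). The same reasoning with $x = g_\chi m_0$, using \eqref{chigchieq} and the fact that $g_\chi$ normalizes $H$, produces $\phi_{g_\chi m_0} \in A^\eta$, and the two are linearly independent by the last clause of Proposition~\ref{etachiprop}.

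The substantial work lies in (i) $\Rightarrow$ (iii) and in showing that $\{\phi_{m_0}, \phi_{g_\chi m_0}\}$ spans $A^\eta$. Given a non-zero $\phi \in A^\eta$, I fix $x \in G$ with $\phi(x) \neq 0$, so that $x$ satisfies \eqref{etachipropeq1} by Proposition~\ref{etachiprop}. Using the Bruhat decomposition~\eqref{Aetatheoremeq2} and the inclusion $K' \subset H$, I can replace $x$ by an element of $N = \bigsqcup_{w \in W} wM$ without changing the double coset $HxH$, and write $x = wm$ with $w \in W$ and $m \in M$. The main step is then a case analysis over the eight Weyl elements. For each $w$ outside of $\{1, s_2 s_1 s_2\}$ (the second being the Weyl class of $g_\chi$, as one reads off from \eqref{gchidefeq}), I would exhibit explicit elements $h \in K'$ lying in $H \cap x^{-1}Hx$ for which \eqref{etachipropeq1} forces $\chi(xhx^{-1})$ to be non-trivial while $\eta(h)$ is trivial --- or vice versa --- contradicting the affine genericity of $\chi$ or $\eta$. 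This Weyl-by-Weyl bookkeeping of how conjugation acts on the root subgroups defining affine genericity is the main obstacle: mechanical, but requiring care.

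For the two surviving cases $w \in \{1, s_2 s_1 s_2\}$, the analogous analysis forces $m \in (M \cap K) Z$ and, modulo this, determines $m$ up to the stabilizer of $\chi$ in $M \cap K$ under the twist action \eqref{chimactioneq}. A short computation setting the three ratios in \eqref{chimactioneq} equal to $1$ modulo $\p$ shows that this stabilizer equals $Z \cdot (M \cap K \cap K')$, which is contained in $H$; so $m$ is uniquely determined modulo $H$, and comparison with the value $m_0$ produced in Part~1 yields (iii). In particular, every non-zero $\phi \in A^\eta$ is supported on $Hm_0H \cup Hg_\chi m_0 H$. Combined with the one-dimensionality of functions in $A^\eta$ supported on a single such double coset (Proposition~\ref{etachiprop}), this proves that $\{\phi_{m_0}, \phi_{g_\chi m_0}\}$ is a basis of $A^\eta$.
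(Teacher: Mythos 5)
Your outline follows the strategy the paper intends: the paper's own ``proof'' is only a citation to Knightly--Li, Theorem 3.4, and your plan (use the decomposition $G = K'NK'$, reduce $x$ to a representative $wm$ with $w$ among the eight Weyl elements, then appeal to Proposition~\ref{etachiprop} for linear independence and for the uniqueness of a supported function on each double coset) is exactly the $\GSp_4$ version of that argument. The treatment of (ii) $\Leftrightarrow$ (iii), the verification that $\chi^{m_0} = \eta$, and the construction of the two independent vectors $\phi_{m_0}, \phi_{g_\chi m_0}$ are all correct.

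However, the constraint you assert for the surviving case $w = s_2 s_1 s_2$ is wrong, and it is not a gap you can safely defer. You write that the analysis ``forces $m \in (M\cap K)Z$'' for both $w = 1$ and $w = s_2 s_1 s_2$; this is true for $w = 1$ but false for $w = s_2 s_1 s_2$. Conjugation by $s_2 s_1 s_2 = (1\,3)(2\,4)$ interchanges the $(2,3)$ entry (an $\OF$-entry of $K'$) with the $(4,1)$ entry (a $\p$-entry), and also exchanges $(1,2)$ with $(3,4)$. Matching $\eta(h) = \chi(xhx^{-1})$ on the three affine-generic coordinates then forces $v(m_3/m_4) = 0$, $v(m_2/m_3) = 1$, and $v(m_4/m_1) = -1$, so $m$ has valuation pattern $(c+1,c+1,c,c)$, not the constant pattern $(c,c,c,c)$. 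This is exactly consistent with the factorization $g_\chi = s_2 s_1 s_2 \cdot \mathrm{diag}(\varpi t_2/t_3,\, -\varpi t_2/t_3,\, 1,\, -1)$: the diagonal part of $g_\chi m_0$ has valuations $(1,1,0,0)$ and lies nowhere near $(M\cap K)Z$. If you insist on $m \in (M\cap K)Z$ in this case, the matching condition is violated and the case appears eliminated, yielding $\dim A^\eta = 1$ — contradicting the theorem. So the conclusion you state (support in $Hm_0H \cup Hg_\chi m_0H$) is correct, but the valuation bookkeeping in the $w = s_2 s_1 s_2$ branch must be redone with the $\varpi$-shift accounted for.
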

\begin{proof}
The proof is similar to that of \cite[Thm.~3.4]{KnightlyLi2015}.
\end{proof}
\subsection{Definition of simple supercuspidal representations of \texorpdfstring{$\GSp_4$}{}}\label{inddecompeq}
Let
\begin{equation}\label{Edefeq}
 E=\{g\in G:\,v(\mu(g))\in2\Z\},
\end{equation}
where $v$ is the normalized valuation on $F$. Then $E$ is a subgroup of $G$ of index $2$. We have $G=E\sqcup g_\chi E$.

Let $t_1,t_2,t_3\in\OF^\times$ and $\chi=\chi_{t_1,t_2,t_3}$ be the associated affine generic character. As before, let $A$ be the standard model of the induced representation $\pi_\chi$. Let $A_0$ be the subspace of $A$ consisting of functions whose support is in $E$, and let $A_1$ be the subspace of $A$ consisting of functions whose support is in $g_\chi E$. Then
\begin{equation}\label{A01decompeq}
 A=A_0\oplus A_1
\end{equation}
by the argument in \cite[Sect.~4.1]{KnightlyLi2015}. Evidently, $A_0$ and $A_1$ are $E$-submodules of $A$, and $A_0$ can be identified with the space of the compactly induced representation
\begin{equation}\label{sigmachidefeq}
 \sigma_\chi=\cInd^E_H(\chi).
\end{equation}
The proof of the following result is similar to that of \cite[Prop.~4.1]{KnightlyLi2015}.
\begin{proposition}\label{A0irredprop}
 The representation $(\sigma_\chi,A_0)$ of $E$ is irreducible. Two such representations $\sigma_\chi$ and $\sigma_\eta$ are equivalent if and only if $\eta=\chi^m$ for some $m\in M\cap K$.
\end{proposition}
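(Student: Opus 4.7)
The plan is to parallel \cite[Prop.~4.1]{KnightlyLi2015} and reduce both assertions to a single intertwining dimension computation, which Theorem~\ref{Aetatheorem} essentially hands us.

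First, by Frobenius reciprocity for compact induction, for every affine generic character $\eta$ one has
\[
 \Hom_E(\sigma_\chi,\sigma_\eta) \cong \Hom_H(\chi,\sigma_\eta|_H),
\]
and the right-hand side identifies with $A^\eta \cap A_0$, where $A^\eta \subset A_\chi$ is defined as in \eqref{Aetadefeq} and $A_0$ is the subspace of $A_\chi$ consisting of functions supported on $E$. (One matches the double-coset decomposition from the Mackey intertwining formula with the double-coset description of $A^\eta$ in Proposition~\ref{etachiprop}.)

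Second, by Theorem~\ref{Aetatheorem}, $A^\eta$ vanishes unless $\chi$ and $\eta$ lie in a common $M \cap K$-orbit, in which case $A^\eta$ has basis $\{\phi_{m_0},\phi_{g_\chi m_0}\}$ with $m_0$ as in \eqref{Aetatheoremeq15}. Since each $\phi_x$ is supported on the single double coset $HxH$, and since $H = ZK' \subset E$ (the center contributes $\mu$-valuation in $2\Z$ and $K'$ contributes $\mu$-valuation $0$), the intersection $A^\eta \cap A_0$ is spanned by those $\phi_x$ with $x \in E$. Now $m_0 \in M \cap K \subset E$, whereas the identity $g_\chi^2 = -\varpi(t_2/t_3)I_4$ forces $v(\mu(g_\chi)) = 1$, so $g_\chi m_0 \notin E$. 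Hence $\dim(A^\eta \cap A_0)$ equals $1$ when $\eta$ is in the $M \cap K$-orbit of $\chi$ and $0$ otherwise.

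Finally, setting $\eta = \chi$ yields $\dim\mathrm{End}_E(\sigma_\chi) = 1$. Since $\sigma_\chi = \cInd_H^E(\chi)$ is admissible and finitely generated (by $f_0$), this forces irreducibility, and the equivalence criterion then follows because any nonzero intertwiner between irreducibles is an isomorphism. The only point of genuine computational substance is the parity $v(\mu(g_\chi)) = 1$; everything else is bookkeeping on top of Theorem~\ref{Aetatheorem}.
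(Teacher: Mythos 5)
Your approach parallels what the paper (via Knightly--Li, Prop.~4.1) does: Mackey/Frobenius reduction to the $H$-double-coset intertwining count, then input from Theorem~\ref{Aetatheorem}. The crucial computation is correct: $g_\chi^2=-\varpi(t_2/t_3)I_4$ gives $v(\mu(g_\chi))=1$, so $g_\chi m_0\notin E$ while $m_0\in M\cap K\subset E$, and hence exactly one of the two basis vectors $\phi_{m_0},\phi_{g_\chi m_0}$ of $A^\eta$ lies in $A_0$. (A minor slip: Frobenius reciprocity for compact induction from the open subgroup $H$ gives $\Hom_E(\sigma_\eta,\sigma_\chi)\cong\Hom_H(\eta,\sigma_\chi|_H)\cong A^\eta\cap A_0$, not $\Hom_E(\sigma_\chi,\sigma_\eta)$; the dimension count is unaffected by symmetry.)

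There is, however, a genuine gap at the final step. You claim that $\dim\End_E(\sigma_\chi)=1$, together with admissibility and finite generation, ``forces irreducibility.'' This is not a valid inference in general: a non-split extension of two non-isomorphic irreducibles (e.g.\ the length-two degenerate principal series of $\GL_2(\Q_p)$ with trivial and Steinberg constituents) is admissible, finitely generated, and has one-dimensional endomorphism algebra, yet is reducible. Moreover, admissibility of $\cInd_H^E(\chi)$ is not available a priori here --- in this setting it is \emph{equivalent} to irreducibility, so invoking it begs the question. What is actually needed, and what Knightly--Li rely on, is the standard irreducibility criterion for compact induction from an open, compact-mod-center subgroup: if no element of $E\setminus H$ intertwines $\chi$ (which is precisely what $\dim(A^\chi\cap A_0)=1$ encodes, since a double coset $HxH\subset E$ supports an intertwiner iff $\phi_x\in A^\chi\cap A_0$), then $\cInd_H^E(\chi)$ is irreducible and supercuspidal. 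With that lemma cited in place of the admissibility claim, the rest of your argument, including the equivalence criterion via non-vanishing of $A^\eta\cap A_0$, closes correctly.
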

Define an operator $L$ on $A$ by
\begin{equation}\label{Ldefeq}
 (L\phi)(x)=\phi(g_\chi^{-1}x),\qquad x\in G.
\end{equation}
It is clear that $L$ induces $E$-isomorphisms $A_0\to A_1$ and $A_1\to A_0$. Hence the $E$-module $A_1$ is also irreducible and isomorphic to $\sigma_\chi$. As a consequence, we obtain the following result, which is proven just like \cite[Cor.~4.3]{KnightlyLi2015}.
\begin{proposition}\label{chietaequivprop}
 Given two affine generic characters $\chi$ and $\eta$, the induced representations $\pi_\chi$ and $\pi_\eta$ of $G$ are equivalent if and only if $\chi$ and $\eta$ belong to the same $M\cap K$-orbit.
\end{proposition}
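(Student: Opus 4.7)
My plan is to follow the template of \cite[Cor.~4.3]{KnightlyLi2015}, treating the two implications separately. The ``if'' direction is a formal consequence of the fact that $M\cap K$ normalizes $H=ZK'$, while the converse reduces, after restriction to the index-$2$ subgroup $E$, to the information already packaged in Proposition~\ref{A0irredprop}.

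For the ``if'' direction, I would assume $\eta=\chi^m$ for some $m\in M\cap K$ and exhibit the explicit intertwiner $T\colon A_\eta\to A_\chi$ given by left translation, $(T\phi)(g):=\phi(m^{-1}g)$. Since $m$ normalizes $Z$ and $K'$ (and hence $H$), a short calculation using the defining relation $\eta(h')=\chi(mh'm^{-1})$, applied with $h'=m^{-1}hm$, shows that $T\phi$ satisfies the transformation law under left multiplication by $H$ required to land in $A_\chi$. Right translation by $G$ trivially commutes with left multiplication by a fixed element, so $T$ is a $G$-intertwiner, and its inverse is the analogous map with $m$ replaced by $m^{-1}$. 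This yields $\pi_\chi\cong\pi_\eta$.

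For the ``only if'' direction, I would restrict to $E$. By \eqref{A01decompeq} we have $A=A_0\oplus A_1$ as $E$-modules, with $A_0\cong\sigma_\chi$ by the definition \eqref{sigmachidefeq} and $A_1\cong A_0$ via the operator $L$ of \eqref{Ldefeq}. Therefore $\pi_\chi|_E\cong\sigma_\chi\oplus\sigma_\chi$, and the analogous statement holds for $\pi_\eta$. Assuming $\pi_\chi\cong\pi_\eta$ as $G$-representations and restricting to $E$ yields $\sigma_\chi\oplus\sigma_\chi\cong\sigma_\eta\oplus\sigma_\eta$ as $E$-modules. Uniqueness of irreducible constituents in a finite-length smooth representation, together with the irreducibility of $\sigma_\chi$ supplied by Proposition~\ref{A0irredprop}, then forces $\sigma_\chi\cong\sigma_\eta$, and the second half of that proposition produces the desired $m\in M\cap K$ with $\eta=\chi^m$.

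I do not anticipate a substantial obstacle, since all the hard structural input is already in hand via Proposition~\ref{A0irredprop} and the $E$-decomposition \eqref{A01decompeq}. The step most likely to require comment is the passage from $\sigma_\chi\oplus\sigma_\chi\cong\sigma_\eta\oplus\sigma_\eta$ to $\sigma_\chi\cong\sigma_\eta$, but since each side has length $2$ with a single isomorphism class of irreducible constituent, this is immediate. The only other sanity check concerns the ``if'' direction: no hypothesis beyond $m\in M\cap K$ is needed, because precisely the normalization of $H$ by $M\cap K$ is what makes the intertwiner $T$ well-defined.
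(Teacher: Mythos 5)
Your proposal is correct and, as far as can be determined, matches the route the paper intends: the paper explicitly defers to \cite[Cor.~4.3]{KnightlyLi2015}, and the argument there is exactly yours. Briefly checking the details: the intertwiner $T\colon A_\eta\to A_\chi$, $(T\phi)(g)=\phi(m^{-1}g)$, is well defined because $m\in M\cap K$ normalizes $H$ and $\eta(m^{-1}hm)=\chi(m(m^{-1}hm)m^{-1})=\chi(h)$, and it obviously commutes with right translation, which settles the ``if'' direction (one could also phrase it as $\pi_\chi\cong\cInd_E^G(\sigma_\chi)$ by transitivity of induction and then apply Proposition~\ref{A0irredprop}, but that is the same isomorphism in different notation). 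For the ``only if'' direction, the restriction to $E$ gives $\pi_\chi|_E\cong\sigma_\chi\oplus\sigma_\chi$ via the decomposition $A=A_0\oplus A_1$ and the $E$-isomorphism $L\colon A_0\to A_1$; then Jordan--H\"older applied to the length-two semisimple $E$-modules forces $\sigma_\chi\cong\sigma_\eta$, and the second clause of Proposition~\ref{A0irredprop} produces the required $m$. No gap.
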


We now decompose $\pi_\chi$ into irreducibles. By Lemma~\ref{chimactionlemma} and Proposition~\ref{chietaequivprop}, we may assume that $\chi=\chi_{1,1,t}$ for $t\in\OF^\times$. In this case $g_\chi^2=-\frac{\varpi}tI_4$. Let $\zeta\in\C$ satisfy $\zeta^2=\omega(-\varpi/t)$. It is straightforward to verify that
\begin{equation}\label{zetaL2eq}
 (\zeta L)^2\phi=\phi\qquad\text{for all }\phi\in A.
\end{equation}
Define
\begin{equation}\label{Sigmazetadefeq}
 \Sigma_\zeta=\{\phi+\zeta L\phi\::\:\phi\in A_0\}.
\end{equation}
The map $A_0\to\Sigma_\zeta$ given by $\phi\mapsto\phi+\zeta L\phi$ is an isomorphism of $E$-modules. Thus $\Sigma_\zeta$ is an irreducible $E$-module isomorphic to $\sigma_\chi$. For $\phi\in A_0$, set $\psi=\pi_\chi(g_\chi)\phi\in A_1$. Then
\begin{equation}
 \pi_\chi(g_\chi)(\phi+\zeta L\phi)=\psi+\zeta L\psi=\zeta L\psi+(\zeta L)^2\psi=\xi+\zeta L\xi,
\end{equation}
where $\xi=\zeta L\psi\in A_0$. This shows that $\Sigma_\zeta$ is $\pi(g_\chi)$-invariant, and hence is a $G$-submodule of $A$. It is an irreducible $G$-module, since it is irreducible as an $E$-module. We denote the action of $G$ on $\Sigma_\zeta$ by $\sigma_\chi^\zeta$. By Proposition~\ref{supercuspidalprop}, $\sigma_\chi^\zeta$ is a supercuspidal representation of $G$. We call it a \emph{simple supercuspidal representation}.
\begin{theorem}\label{pichizetatheorem}
 Let $\chi=\chi_{1,1,t}$ with $t\in\OF^\times$, and fix $\zeta\in\C^\times$ with $\zeta^2=\omega(-\varpi/t)$. Then
 \begin{equation}\label{pichizetatheoremeq1}
  \pi_\chi=\sigma_\chi^\zeta\oplus\sigma_\chi^{-\zeta},
 \end{equation}
 and the two supercuspidal representations $\sigma_\chi^\zeta$ and $\sigma_\chi^{-\zeta}$ are not isomorphic.
\end{theorem}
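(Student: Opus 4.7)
The plan is twofold: (a) establish the direct sum decomposition $A=\Sigma_\zeta\oplus\Sigma_{-\zeta}$ as vector spaces (each summand already being a non-zero irreducible $G$-submodule by the discussion preceding the theorem), and (b) prove that $\sigma_\chi^\zeta\not\cong\sigma_\chi^{-\zeta}$.

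For part (a), I use that $(\zeta L)^2=1$ by \eqref{zetaL2eq} and that $L$ swaps $A_0$ and $A_1$, so $\zeta L$ restricts to a bijection $A_1\to A_0$. Given $\phi=\phi_0+\phi_1\in A_0\oplus A_1=A$, set $\alpha=\tfrac12(\phi_0+\zeta L\phi_1)$ and $\beta=\tfrac12(\phi_0-\zeta L\phi_1)$, both in $A_0$; a direct check gives $\phi=(\alpha+\zeta L\alpha)+(\beta-\zeta L\beta)\in\Sigma_\zeta+\Sigma_{-\zeta}$. For the intersection, an equation $\alpha+\zeta L\alpha=\beta-\zeta L\beta$ with $\alpha,\beta\in A_0$ forces (by uniqueness of the $A_0$- and $A_1$-components) $\alpha=\beta$ and $\zeta L\alpha=-\zeta L\beta$, hence $\alpha=\beta=0$.

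For part (b), I compute the pullback of $\pi_\chi(g_\chi)$ through the $E$-isomorphisms $A_0\xrightarrow{\sim}\Sigma_{\pm\zeta}$ given by $\phi\mapsto\phi\pm\zeta L\phi$. Following the manipulation in the excerpt, for $\phi\in A_0$ and $\psi=\pi_\chi(g_\chi)\phi\in A_1$, one has $\psi\pm\zeta L\psi=\xi_\pm\pm\zeta L\xi_\pm$ where $\xi_\pm=\pm\zeta L\psi\in A_0$. Hence the operator on $A_0$ induced by $\pi_\chi(g_\chi)$ through the identification with $\Sigma_{\pm\zeta}$ is $T_{\pm\zeta}\phi=\pm\zeta L\pi_\chi(g_\chi)\phi$; in particular, $T_{-\zeta}=-T_\zeta$. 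Any $G$-isomorphism $\sigma_\chi^\zeta\xrightarrow{\sim}\sigma_\chi^{-\zeta}$ transports to a linear map $U:A_0\to A_0$ intertwining the common $E$-action $\sigma_\chi$, so by Schur's lemma applied to the irreducible representation $\sigma_\chi$ (Proposition \ref{A0irredprop}), $U=c\cdot\mathrm{id}$ for some $c\in\C$. The relation $UT_\zeta=T_{-\zeta}U$ then reads $cT_\zeta=-cT_\zeta$, which forces $c=0$ since $T_\zeta$ is invertible (as a composition of bijections) on the non-zero space $A_0$.

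The only step requiring genuine care is the computation of the pulled-back $g_\chi$-action on $A_0$; once the sign flip $T_{-\zeta}=-T_\zeta$ is in hand, Schur's lemma closes the argument at once, and I do not foresee any serious obstacle.
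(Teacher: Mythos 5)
Your proof is correct. For part (a), you supply the details of the decomposition $A=\Sigma_\zeta\oplus\Sigma_{-\zeta}$, which the paper describes as ``easy to see"; your computation is exactly what is meant. For part (b), you take a genuinely different route from the paper. The paper's argument is short and abstract: by Frobenius reciprocity and Theorem~\ref{Aetatheorem}, $\dim\Hom_G(\pi_\chi,\pi_\chi)=\dim A^\chi=2$, whereas $\sigma_\chi^\zeta\cong\sigma_\chi^{-\zeta}$ would force $\dim\Hom_G(\pi_\chi,\pi_\chi)=4$. Your argument is concrete: both $\sigma_\chi^{\pm\zeta}$ are realized on the common $E$-module $A_0$ via the identifications $\phi\mapsto\phi\pm\zeta L\phi$, the induced $g_\chi$-actions $T_{\pm\zeta}=\pm\zeta L\pi_\chi(g_\chi)$ differ by a sign, and Schur's lemma for the irreducible $E$-module $(\sigma_\chi,A_0)$ (Proposition~\ref{A0irredprop}) forces any putative intertwiner to be scalar, so the sign discrepancy kills it. Both are valid; the paper's dimension count buys economy and would scale to index-$n$ subgroups with $n$ constituents, whereas your sign-flip argument is specific to the index-$2$ setting but makes the mechanism of non-isomorphism explicit, exhibiting precisely how the two constituents differ (by the extension of $\chi$ to $H'$, i.e.\ $\chi^+$ versus $\chi^-$ in the notation of \eqref{chipmdefeq}). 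Both proofs ultimately rest on the same two ingredients: irreducibility of $\sigma_\chi$ as an $E$-module and the structure of $L$.
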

\begin{proof}
It is easy to see that $A=\Sigma_\zeta\oplus\Sigma_{-\zeta}$ as vector spaces, proving \eqref{pichizetatheoremeq1}. To prove the last assertion, observe
\begin{equation}\label{pichizetatheoremeq2}
 \Hom_G(\pi_\chi,\pi_\chi)\cong\Hom_H(\chi,\pi_\chi)\cong A^\chi,
\end{equation}
and $\dim A_\chi=2$ by Theorem \ref{Aetatheorem}.
\end{proof}

Since $A_0\cong\Sigma_\zeta$ as vector spaces, the representation $\sigma_\chi^\zeta$ has a model on $A_0$. It is given by
\begin{equation}\label{A0modeleq}
 (\sigma_\chi^\zeta(g)\phi)(x)=\begin{cases}
                                \phi(xg)&\text{if }g\in E,\\
                                \zeta\phi(g_\chi^{-1}xg)&\text{if }g\in g_\chi E.
                               \end{cases}
\end{equation}
We consider in particular the case that the central character $\omega$ is trivial. Then $\zeta^2=1$. Instead of \eqref{Sigmazetadefeq} we will write
\begin{equation}\label{Sigmazetadefe2}
 \Sigma_\pm=\{\phi\pm L\phi\::\:\phi\in A_0\},
\end{equation}
and denote the action of $G$ on this space by $\sigma_\chi^\pm$. We have $\pi_\chi=\sigma_\chi^+\oplus\sigma_\chi^-$. Consider the group
\begin{equation}\label{Hprimedefeq}
 H'=H\sqcup g_\chi H.
\end{equation}
The character $\chi=\chi_{1,1,t}$ of $H$ admits two different extensions $\chi^+$ and $\chi^-$ to a character of $H'$, given by
\begin{equation}\label{chipmdefeq}
 \chi^\pm(h)=\chi(h) \text{ and } \chi^\pm(g_\chi h)=\pm\chi(h)
\end{equation}
for $h\in H$. Since $\ind_H^{H'}(\chi)=\chi^+\oplus\chi^-$, we see that
\begin{equation}\label{chipmindeq}
 \sigma_\chi^\pm=\cInd^G_{H'}(\chi^\pm).
\end{equation}
\subsection{Paramodular vectors}\label{paravecsec}
Let $\chi=\chi_{1,1,t}$ with $t\in\OF^\times$. Let $A$ be the standard space of the representation $\pi_\chi$ defined in~\eqref{pichidefeq}. In this section we consider paramodular vectors in $A$, and hence assume that the central character of $\pi_\chi$ is trivial. In this case the number $\zeta$ appearing in \eqref{zetaL2eq} is $\pm1$. Hence $\Sigma_+$ and $\Sigma_-$ as in \eqref{Sigmazetadefe2} are the two irreducible constituents of $\pi_\chi$. Let $\sigma_\chi^\pm$ be the representation of $G$ on $\Sigma_\pm$.

For a non-negative integer $n$ recall the paramodular group $\K{n}$ defined in \eqref{paradefeq}. Let $A(n)$ be the subspace of $A$ consisting of $\K{n}$-invariant vectors.

\begin{lemma}\label{HgKwelldeflemma}
 For $g\in G$ and a non-negative integer $n$ the following are equivalent.
 \begin{enumerate}
  \item There exists $f\in A(n)$ with $f(g)\neq0$.
  \item $\chi$ is trivial on $H\cap g\K{n}g^{-1}$.
 \end{enumerate}
\end{lemma}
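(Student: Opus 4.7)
The plan is to unpack the two conditions directly from the definitions of compact induction and paramodular invariance, using the standard trick that a vector in $\cInd_H^G(\chi)$ is determined on each double coset $HgK$ by its value at the representative.

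For the direction (i) $\Rightarrow$ (ii), I would take $f \in A(n)$ with $f(g) \neq 0$ and an arbitrary $h \in H \cap g\K{n}g^{-1}$, writing $h = gkg^{-1}$ for some $k \in \K{n}$. Then $hg = gk$, so on one hand $f(hg) = \chi(h)f(g)$ by the transformation law of $A$, while on the other hand $f(gk) = f(g)$ by $\K{n}$-invariance. Dividing by the nonzero $f(g)$ gives $\chi(h) = 1$.

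For the converse direction (ii) $\Rightarrow$ (i), I would construct an explicit $f \in A(n)$ by defining it to be supported on the double coset $Hg\K{n}$ and setting
\begin{equation*}
 f(hgk) = \chi(h), \qquad h \in H,\ k \in \K{n},
\end{equation*}
and zero elsewhere. The key point is well-definedness: if $h_1 g k_1 = h_2 g k_2$ for some $h_i \in H$, $k_i \in \K{n}$, then $h_2^{-1}h_1 = g k_2 k_1^{-1} g^{-1} \in H \cap g\K{n}g^{-1}$, on which $\chi$ is assumed to be trivial, so $\chi(h_1) = \chi(h_2)$. Support modulo $Z$ is compact because $Z \subset H$ and $\K{n}$ is compact. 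By construction $f \in A(n)$ and $f(g) = 1 \neq 0$.

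No step here presents a real obstacle — the entire statement is a standard compact-induction/double-coset bookkeeping argument. The only point requiring a moment's care is the well-definedness in the converse direction, which is precisely what condition (ii) is designed to guarantee.
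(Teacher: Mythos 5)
Your proof is correct and is exactly the ``straightforward verification'' the paper alludes to: the forward direction compares the $H$-equivariance of $f$ with its right $\K{n}$-invariance on the element $hg = gk$, and the converse constructs the natural indicator-type function on $Hg\K{n}$, with condition (ii) providing precisely the well-definedness check. Nothing is missing.
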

\begin{proof}
Straightforward verification.
\end{proof}

We consider double cosets of the form $Hg\K{n}$, where $g\in M$. By adjusting by units and an element of the center, we see that every such double coset is of the form
\begin{equation}\label{HMKeq1}
 H d_{\varpi^i,\varpi^j}\K{n},\qquad i,j\in\Z.
\end{equation}
It is an easy exercise to show that the pair $(i,j)$ is uniquely determined by the double coset. The following result is an easy consequence of Lemma~\ref{HgKwelldeflemma}.
\begin{lemma}\label{HMKlemma}
 The double coset \eqref{HMKeq1} supports an element of $A(n)$ if and only if $i,j\geq1$ and $2i+j\leq n-2$.
\end{lemma}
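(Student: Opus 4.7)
The plan is to apply Lemma \ref{HgKwelldeflemma}: with $g = d_{\varpi^i,\varpi^j} = {\rm diag}(\varpi^{2i+j},\varpi^{i+j},\varpi^i,1)$, the double coset $H g \K{n}$ supports an element of $A(n)$ if and only if $\chi$ is trivial on $H \cap g\K{n}g^{-1}$. Since $\omega$ is assumed trivial in this section, the central scalar factor of any element of $H$ contributes nothing to $\chi$, so the problem reduces to a question about $K' \cap g\K{n}g^{-1}$ after noting that the $(1,1)$-entry of every element of $g\K{n}g^{-1}$ still lies in $\OF^\times$, forcing any central representative to be a unit.

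Next, I would compute $g\K{n}g^{-1}$ entrywise: since $g$ is diagonal with valuation vector $(d_1,d_2,d_3,d_4)=(2i+j,i+j,i,0)$, the $(a,b)$-entry of $g\K{n}g^{-1}$ is obtained from the $(a,b)$-entry of $\K{n}$ by scaling with $\varpi^{d_a - d_b}$. Intersecting the resulting matrix of ideals with the entrywise description of $K'$ yields an explicit matrix of ideals for $K' \cap g\K{n}g^{-1}$; one checks in particular that the diagonal is forced into $1+\p$, confirming the central-scalar reduction.

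Reading off formula \eqref{chidefeq}, $\chi$ is trivial on an element of $K'$ exactly when its $(1,2)$-entry lies in $\p$, its $(2,3)$-entry lies in $\p$, and its $(4,1)$-entry lies in $\p^2$ (using that $t_1,t_2,t_3 \in \OF^\times$). From the explicit description, these three entries of $K' \cap g\K{n}g^{-1}$ lie in $\OF\cap\varpi^i\OF$, $\OF\cap\varpi^j\OF$, and $\p\cap\p^{n-2i-j}$ respectively. Demanding containment in $\p,\p,\p^2$ is therefore equivalent to $i\geq 1$, $j\geq 1$, and $n-2i-j \geq 2$, which are exactly the claimed inequalities.

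The ``only if'' direction requires one more step. When one of the three inequalities fails, I would exhibit the appropriate elementary matrix $I + xE_{1,2}$, $I + xE_{2,3}$, or $I + xE_{4,1}$ (genuine elements of $\GSp_4$ since these are root positions) with $x$ in the corresponding ideal as an explicit element of $K' \cap g\K{n}g^{-1}$ on which $\chi$ is nontrivial. The main obstacle, such as it is, lies in the careful bookkeeping of the entrywise computation of $g\K{n}g^{-1}$ and its intersection with $K'$; once this is laid out as a table of ideals, the three inequalities fall out by inspection.
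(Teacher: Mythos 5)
Your strategy is the one the paper intends: the paper's ``proof'' is simply the remark that the lemma is an easy consequence of Lemma \ref{HgKwelldeflemma}, and you flesh this out by computing $g\K{n}g^{-1}$ entrywise, reducing from $H$ to $K'$ using triviality of $\omega$, and reading off the conditions from \eqref{chidefeq}. The ideal computation and the resulting inequalities are all correct. There are, however, two slips. First, the claim that ``$\chi$ is trivial on an element of $K'$ \emph{exactly when}'' its $(1,2),(2,3),(4,1)$-entries lie in $\p,\p,\p^2$ is only a sufficient condition for a single element, not a necessary one --- one can have $t_1r_1+t_2r_2+t_3r_3\in\p$ with, say, $r_1,r_2\notin\p$. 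This overstatement is harmless because your first paragraph only ever uses sufficiency (for the ``if'' direction) and the ``only if'' is handled by your explicit-element argument. Second, $I+xE_{1,2}$ is \emph{not} in $\GSp_4(F)$ for $x\neq 0$: one computes ${}^t(I+xE_{1,2})\,J\,(I+xE_{1,2})=J+x(E_{2,4}-E_{4,2})$, which is not a scalar multiple of $J$. The $(1,2)$-position is not by itself a root space for $\mathfrak{sp}_4$; the root element for the short simple root is $I+x(E_{1,2}-E_{3,4})$, which does lie in $\Sp_4(F)$ and still has vanishing $(2,3)$- and $(4,1)$-entries, so your argument goes through unchanged after this substitution. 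The long-root positions $(2,3)$ and $(4,1)$ are genuine one-dimensional root spaces, so $I+xE_{2,3}$ and $I+xE_{4,1}$ are fine as stated. With these two corrections the proof is complete and matches the paper's approach.
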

Let $A^*(n)$ be the space of $f\in A(n)$ that are supported on double cosets of the form \eqref{HMKeq1}. Lemma~\ref{HMKlemma} shows that
\begin{equation}\label{dimAstarneq}
 \dim A^*(n)=
 \begin{cases}
  \Big\lfloor\frac{(n-3)^2}4\Big\rfloor&\text{for }n\geq5,\\[1ex]
  0&\text{for }n\leq4.
 \end{cases}
\end{equation}
Recall the decomposition \eqref{A01decompeq}, and define $A_0(n)=A_0\cap A(n)$ and $A_1(n)=A_1\cap A(n)$. Assuming that $(i,j)$ satisfies $i,j\geq1$ and $2i+j\leq n-2$, let $f_{i,j}^{(n)}\in A(n)$ be the vector supported on the double coset \eqref{HMKeq1} and taking the value $1$ on the diagonal representative. We have $f_{i,j}^{(n)}\in A_0$ if $j$ is even and $f_{i,j}^{(n)}\in A_1$ if $j$ is odd. For example, $f_{1,1}^{(5)}$ is in $A_1(5)$. Explicitly,
\begin{equation}\label{f115eq}
 f_{1,1}^{(5)}(h d_{\varpi, \varpi}k)=\chi(h)\qquad\text{for }h\in H,\;k\in\K{5}.
\end{equation}
 We have
\begin{equation}\label{gchiuneq}
 g_\chi=\begin{bsmallmatrix}0&0&1&0\\0&0&0&-1\\-\varpi/t&0&0&0\\0&\varpi/t&0&0\end{bsmallmatrix}=\begin{bsmallmatrix}1\\&1\\&&-\varpi^{1-n}t^{-1}\\&&&-\varpi^{1-n}t^{-1}\end{bsmallmatrix}u_n,
\end{equation}
with $u_n$ defined in (\ref{undefeq}). Now let $f_1:=f_{1,1}^{(5)}$ and set $f_0:=Lf_1$, where $L$ is the operator defined in \eqref{Ldefeq}. By \eqref{zetaL2eq} we have $L^2=1$, so that $f_1=Lf_0$. Let
\begin{equation}\label{fpmdefeq}
 f_\pm:=f_0\pm f_1.
\end{equation}
Since $f_1=Lf_0$, we see $f_\pm\in\Sigma_\pm$. A straightforward calculation using \eqref{gchiuneq} confirms that
\begin{equation}\label{fpmu5eq}
 \pi_\chi(u_5)f_\pm=\pm f_\pm.
\end{equation}
For the following lemma, let $T_{0,1}$ be the paramodular Hecke operator of level $5$ defined in (6.3) of \cite{NF}.
\begin{lemma}\label{sigmapmlevellemma}
 We have
 $$
  (T_{0,1}f_\pm)( d_{\varpi, \varpi})=0.
 $$
\end{lemma}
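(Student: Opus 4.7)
My plan is to unwind $(T_{0,1}f_\pm)(d_{\varpi,\varpi})$ directly using the single coset decomposition of $T_{0,1}$ from \cite[(6.3)]{NF}. That formula writes $T_{0,1}f(g)=\sum_i f(g h_i)$ for an explicit finite list of representatives $h_i$ of $\K{5}\bs \K{5}\alpha\K{5}$, where $\alpha$ is the element defining $T_{0,1}$. Splitting $f_\pm=f_0\pm f_1$ with $f_1=f_{1,1}^{(5)}$ supported on $H d_{\varpi,\varpi}\K{5}$ and $f_0=Lf_1$ supported on $H g_\chi d_{\varpi,\varpi}\K{5}$, the sum reduces to
\begin{equation*}
 (T_{0,1}f_\pm)(d_{\varpi,\varpi})=\sum_i f_0(d_{\varpi,\varpi}h_i)\pm\sum_i f_1(d_{\varpi,\varpi}h_i),
\end{equation*}
and each summand is nonzero only when $d_{\varpi,\varpi}h_i$ lies in one of the two named double cosets.

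The first step is the classification. By Lemma~\ref{HMKlemma}, the only pair $(i,j)$ with $i,j\ge1$ and $2i+j\le 3$ is $(1,1)$, so among double cosets $Hd_{\varpi^i,\varpi^j}\K{5}$ the function $f_1$ is forced onto $Hd_{\varpi,\varpi}\K{5}$ (and $f_0$ onto its $g_\chi$-translate). For each coset representative $h_i$ I would then compute the product $d_{\varpi,\varpi}h_i$ and use the affine Bruhat decomposition \eqref{Aetatheoremeq2} together with the valuation of the similitude (which must match $v(\mu(d_{\varpi,\varpi}))=3$ for the $f_1$ contribution and a corresponding even value shifted by $g_\chi$ for the $f_0$ contribution) to check whether the product lands in the relevant double coset.

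For the contributing $h_i$ I would then write $d_{\varpi,\varpi}h_i=h\cdot g_0\cdot k$ with $h\in H$, $k\in\K{5}$, and $g_0\in\{d_{\varpi,\varpi},\,g_\chi d_{\varpi,\varpi}\}$, and read off the value as $\chi(h)$; the $h_i$ that give nontrivial $r_1,r_2,r_3$ entries in the $H$-part of the decomposition produce factors $\psi_0(t_1r_1+t_2r_2+t_3r_3)$ through \eqref{chidefeq}. The final step is to sum these character values. I expect the contributing representatives to be parameterized by elements $x\in\OF/\p$ such that the $H$-part depends linearly and nontrivially on $x$; summing $\psi_0$ of a nontrivial character of $\mathbf{k}$ then yields zero, which produces the desired vanishing for \emph{both} signs simultaneously (so the argument must not rely on cancellation between the $+f_1$ and $-f_1$ terms but rather on each sum vanishing separately).

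The main obstacle is the bookkeeping of step two: with the specific list of representatives from \cite[(6.3)]{NF} one must carry out a matrix computation for each representative to locate $d_{\varpi,\varpi}h_i$ in $H\backslash G/\K{5}$, and in the cases where it does fall into the support, extract the $H$-part correctly modulo $1+\p$ in the three affine-generic entries. Once this case analysis is complete, the vanishing follows cleanly from non-triviality of $\psi_0$ on $\mathbf{k}$.
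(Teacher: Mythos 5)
Your overall framework is the same as the paper's: apply the explicit right-coset decomposition of $T_{0,1}$ from \cite{NF} (the paper uses the four-part formula of Lemma~6.1.2~i) of \cite{NF}, giving four sums $A,B,C,D$), compute $d_{\varpi,\varpi}h_i$ for each representative, and decide membership in the support of $f_\pm$ via the double coset classification of Lemma~\ref{HMKlemma}. Your identification of the relevant support ($f_1$ lives on $Hd_{\varpi,\varpi}\K5$, $f_0$ on its $g_\chi$-translate) and the observation that $(i,j)=(1,1)$ is the only admissible diagonal at level~$5$ are both correct.

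Where your prediction goes astray is in the expected mechanism of vanishing. You anticipate that a nonempty family of representatives lands the argument in the support and that the vanishing comes from summing $\psi_0$ over a nontrivial linear character of $\mathbf{k}$. The paper's computation shows something stronger and simpler: after absorbing the unipotent parameters into $H$ and $\K5$, \emph{every} term reduces to an evaluation of $f_\pm$ at a point $H'd_{\varpi^i,\varpi^j}\K5$ with $(i,j)\neq(1,1)$ (for example, in the sum $A$ the $z=0$ term lands at $d_{\varpi,\varpi^2}$, and the $z\in(\OF/\p)^\times$ terms reduce to the double coset of $d_{\varpi,\varpi}\,\mathrm{diag}(1,\varpi,1,\varpi)$, which modulo center is $d_{1,\varpi^2}$, hence has $i=0$). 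In other words $A=B=C=D=0$ individually because the arguments lie entirely outside the support; there is no character-sum cancellation at all. Your parenthetical remark that the vanishing should not depend on the $\pm$ sign is correct and is borne out by this: each of $A,B,C,D$ is zero for the same support-theoretic reason regardless of sign. If you carry out your "step two" bookkeeping honestly you will find the answer, but you should not expect a hyper-Kloosterman-type cancellation here — the set of contributing representatives in your "step three" is empty.
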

\begin{proof}
By \cite[Lemma~6.1.2 i)]{NF},
$$
 (T_{0,1}f_\pm)( d_{\varpi, \varpi})=A+B+C+D,
$$
with
\begin{align*}
 A&=\sum_{x,y,z\in\OF/\p}f_\pm( d_{\varpi, \varpi}\begin{bsmallmatrix}1&&y&z\varpi^{-5}\\&1&x&y\\&&1\\&&&1\end{bsmallmatrix} d_{1,\varpi}),\\
 B&=\sum_{x,z\in\OF/\p}f_\pm( d_{\varpi, \varpi}\begin{bsmallmatrix}1&x&&z\varpi^{-5}\\&1\\&&1&-x\\&&&1\end{bsmallmatrix} d_{\varpi,\varpi^{-1}}),\\
 C&=\sum_{x,y\in\OF/\p}f_\pm( d_{\varpi, \varpi}t_5\begin{bsmallmatrix}1&&y\\&1&x&y\\&&1\\&&&1\end{bsmallmatrix} d_{1,\varpi}),\\
 D&=\sum_{x\in\OF/\p}f_\pm( d_{\varpi, \varpi}t_5\begin{bsmallmatrix}1&x\\&1\\&&1&-x\\&&&1\end{bsmallmatrix} d_{\varpi,\varpi^{-1}}),
\end{align*}
where
$$
 t_5=\begin{bsmallmatrix}1&&&-\varpi^{-5}\\&1\\&&1\\\varpi^5\end{bsmallmatrix}.
$$
We have
\begin{align*}
 A&=q^2\sum_{z\in\OF/\p}f_\pm( d_{\varpi, \varpi}\begin{bsmallmatrix}1&&&z\varpi^{-5}\\&1\\&&1\\&&&1\end{bsmallmatrix} d_{1,\varpi})\\
 &=q^2\sum_{z\in(\OF/\p)^\times}f_\pm( d_{\varpi, \varpi}\begin{bsmallmatrix}1&&&z\varpi^{-5}\\&1\\&&1\\&&&1\end{bsmallmatrix} d_{1,\varpi})\\
 &=q^2\sum_{z\in(\OF/\p)^\times}f_\pm( d_{\varpi, \varpi}\begin{bsmallmatrix}1\\&1\\&&1\\z^{-1}\varpi^5&&&1\end{bsmallmatrix}\begin{bsmallmatrix}&&&z\varpi^{-5}\\&1\\&&1\\-z^{-1}\varpi^5\end{bsmallmatrix}\begin{bsmallmatrix}1\\&1\\&&1\\z^{-1}\varpi^5&&&1\end{bsmallmatrix} d_{1,\varpi})\\
 &=q^2\sum_{z\in(\OF/\p)^\times}f_\pm( d_{\varpi, \varpi}\begin{bsmallmatrix}1\\&1\\&&1\\z^{-1}\varpi^5&&&1\end{bsmallmatrix}\begin{bsmallmatrix}&&&z\varpi^{-5}\\&1\\&&1\\-z^{-1}\varpi^5\end{bsmallmatrix} d_{1,\varpi})\\
 &=q^2\sum_{z\in(\OF/\p)^\times}f_\pm( d_{\varpi, \varpi}\begin{bsmallmatrix}1\\&1\\&&1\\z^{-1}\varpi^5&&&1\end{bsmallmatrix}\begin{bsmallmatrix}1\\&\varpi\\&&1\\&&&\varpi\end{bsmallmatrix}\begin{bsmallmatrix}&&&z\varpi^{-5}\\&1\\&&1\\-z^{-1}\varpi^5\end{bsmallmatrix})\\
 &=q^2\sum_{z\in(\OF/\p)^\times}f_\pm( d_{\varpi, \varpi}\begin{bsmallmatrix}1\\&1\\&&1\\z^{-1}\varpi^5&&&1\end{bsmallmatrix}\begin{bsmallmatrix}1\\&\varpi\\&&1\\&&&\varpi\end{bsmallmatrix})\\
 &=q^2\sum_{z\in(\OF/\p)^\times}f_\pm( d_{\varpi, \varpi}\begin{bsmallmatrix}1\\&\varpi\\&&1\\&&&\varpi\end{bsmallmatrix})\\
 &=0.
\end{align*}
Similar arguments show $B=C=D=0$.
\end{proof}

\begin{proposition}\label{sigmapmlevelprop}
 The representations $\sigma_\chi^\pm$ have conductor exponent $a(\sigma_\chi^\pm)=5$.
\end{proposition}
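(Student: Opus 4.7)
The plan is to prove the bounds $a(\sigma_\chi^\pm) \le 5$ and $a(\sigma_\chi^\pm) \ge 5$ separately, the first via the explicit paramodular vector $f_\pm$, and the second via the Hecke operator vanishing established in Lemma \ref{sigmapmlevellemma}.

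\textbf{Upper bound.} I would first verify that $f_\pm = f_0 \pm f_1$ is a nonzero $\K{5}$-invariant vector in $\Sigma_\pm$. Membership in $\Sigma_\pm$ is immediate from the definition \eqref{Sigmazetadefe2} together with $L^2 = 1$. For $\K{5}$-invariance: $f_1 = f_{1,1}^{(5)}$ is by construction, and since $L$ is left-translation by $g_\chi^{-1}$ it commutes with right-translation by $\K{5}$, so $f_0 = Lf_1$ is also $\K{5}$-invariant, hence so is $f_\pm$. Nonvanishing follows from the direct sum decomposition $A = A_0 \oplus A_1$ with $f_0 \in A_0$ and $f_1 \in A_1$ both nonzero. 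This gives $a(\sigma_\chi^\pm) \le 5$.

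\textbf{Lower bound.} Since simple supercuspidals are generic (the affine generic character $\chi$ restricted to the unipotent radical of the Borel provides a Whittaker functional via Frobenius reciprocity), the paramodular newform theory of Roberts--Schmidt \cite{NF} applies to $\sigma_\chi^\pm$. I would argue by contradiction: suppose $a(\sigma_\chi^\pm) = N < 5$. Then by the oldform structure theorem of \cite{NF}, every nonzero $\K{5}$-invariant vector in $\sigma_\chi^\pm$ is an oldform obtained by applying the three level-raising operators (composed appropriately to raise from level $N$ to level $5$) to the newform at level $N$. One then checks, using the formulas for these level-raising operators combined with the formula for $T_{0,1}$ in \cite[Lem.~6.1.2]{NF}, that $(T_{0,1}v)(d_{\varpi,\varpi}) \neq 0$ for every such nonzero oldform $v$ at level $5$. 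This contradicts Lemma \ref{sigmapmlevellemma}, which asserts $(T_{0,1}f_\pm)(d_{\varpi,\varpi}) = 0$. Hence $a(\sigma_\chi^\pm) \ge 5$.

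\textbf{Main obstacle.} The hard part is the oldform/Hecke analysis at the heart of the lower bound: one needs uniform control, for every possible $N \in \{1,2,3,4\}$, of the action of $T_{0,1}$ on the oldform space at level $5$ and its evaluation at $d_{\varpi,\varpi}$. A cleaner alternative would bypass the oldform machinery entirely by showing directly that $A(n) = 0$ for $n \le 4$, extending the computation \eqref{dimAstarneq} of $\dim A^*(n)$ beyond diagonal double cosets. Concretely, using the affine Bruhat decomposition $G = K'NK' \subset HN\K{n}$, every $H$-$\K{n}$ double coset has a representative $wm$ with $w \in W$ and $m = d_{\varpi^i,\varpi^j}$, and one enumerates over the eight Weyl elements to check via Lemma \ref{HgKwelldeflemma} that none yields a subgroup $H \cap wm\K{n}(wm)^{-1}$ on which $\chi$ is trivial when $n \le 4$. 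Either route concludes the proof.
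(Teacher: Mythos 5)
Your upper bound matches the paper exactly, and your instinct to use Lemma~\ref{sigmapmlevellemma} together with the Hecke/oldform machinery of \cite{NF} for the lower bound is the right one. But there is a genuine gap in how you propose to execute the lower bound, and the paper closes it with an ingredient you do not mention.

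You propose to rule out every $N\in\{1,2,3,4\}$ by analyzing the oldform space at level~$5$ descending from a putative newform at level~$N$, and you acknowledge that obtaining uniform control over all four cases is the ``main obstacle.'' The paper avoids this entirely by first invoking \cite[Theorem~8.4.7]{stable-jrs}, which says that \emph{generic supercuspidal} representations of $\GSp_4$ automatically have conductor exponent $\geq 4$. This reduces the lower bound to a single case, $N=4$, and in that case the oldform analysis is completely manageable: $\Sigma_\pm(5)$ is exactly two-dimensional with basis $\theta f^{\rm new}_\pm$, $\theta' f^{\rm new}_\pm$, the Atkin--Lehner eigenvectors are $(\theta\pm\theta')f^{\rm new}_\pm$, and a chain of identities from \cite{NF} (Cor.~7.4.6 gives $T_{0,1}f^{\rm new}_\pm=0$ at level 4, Cor.~6.3.2 then forces $T_{0,1}$-eigenvalue $\pm q^2$ on the Atkin--Lehner eigenvectors at level 5) combined with \eqref{fpmu5eq} pins down $f_\pm$ as one of those eigenvectors. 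This contradicts Lemma~\ref{sigmapmlevellemma} cleanly. Without the conductor $\geq 4$ input, one would have to control a potentially larger oldform space built from four level-raising operators composed in multiple ways, and your claim that $(T_{0,1}v)(d_{\varpi,\varpi})\neq 0$ for \emph{every} such $v$ is asserted but not verified; it is not obviously true.

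Your proposed ``cleaner alternative'' also has a flaw as stated. The inclusion $G=K'NK'\subset HN\K{n}$ requires $K'\subset\K{n}$, but this fails for $n\geq 2$: the $(2,1)$-entry of an element of $K'$ need only lie in $\p$, while $\K{n}$ demands $\p^n$. So the enumeration of double cosets by Weyl-times-diagonal representatives is not immediate, and showing $A(n)=0$ for $n\leq 4$ would require a more careful double-coset analysis than you sketch. In short: the key missing idea in your proposal is the a priori conductor bound for generic supercuspidals, which is what makes the paper's argument terminate.
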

\begin{proof}
Let $\Sigma_\pm(n)$ be the space of $\K{n}$-invariant vectors in $\Sigma_\pm$. Above we produced a vector $f_\pm\in\Sigma_\pm(5)$. It follows that $\sigma_\chi^\pm$ is generic, and that $a(\sigma_\chi^\pm)\leq5$.

Using \cite[Theorem 8.4.7]{stable-jrs}, we know that generic supercuspidal representations have conductor exponent $\geq4$. So we only have to exclude the possibility that $a(\sigma_\chi^\pm)=4$.

Assume that $a(\sigma_\chi^\pm)=4$; we will obtain a contradiction. By our assumption, $\Sigma_\pm(4)$ is one-dimensional, spanned by a newvector $f^{\rm new}_\pm$. By \cite[Thm.~7.5.6]{NF} we know that $\Sigma_\pm(5)$ is $2$-dimensional, spanned by $\theta f^{\rm new}_\pm$ and $\theta' f^{\rm new}_\pm$; here $\theta$ and $\theta'$ are the level raising operators defined in Sect.~3.2 of \cite{NF}. The Atkin-Lehner eigenvectors in $\Sigma_\pm(5)$ are $(\theta+\theta')f^{\rm new}_\pm$ and $(\theta-\theta')f^{\rm new}_\pm$; one of them has $u_5$-eigenvalue $+1$ and the other has $u_5$-eigenvalue $-1$. It therefore follows from \eqref{fpmu5eq} that, at least up to multiples,
\begin{equation}\label{sigmapmlevelpropeq1}
 f_\pm=(\theta+\theta')f^{\rm new}_\pm\qquad\text{or}\qquad f_\pm=(\theta-\theta')f^{\rm new}_\pm.
\end{equation}
By \cite[Cor.~7.4.6]{NF}, $T_{0,1}f^{\rm new}_\pm=0$. (Here, $T_{0,1}$ is the Hecke operator at level $4$.) As a consequence, by \cite[Cor.~6.3.2]{NF}, $(\theta+\theta')f^{\rm new}_\pm$ and $(\theta-\theta')f^{\rm new}_\pm$ are eigenvectors for $T_{0,1}$ with eigenvalues $\pm q^2$. (Here, $T_{0,1}$ is the Hecke operator at level $5$.) Hence $f_\pm$ is an eigenvector for $T_{0,1}$ with eigenvalue $q^2$ or $-q^2$. However, this contradicts Lemma~\ref{sigmapmlevellemma}.
\end{proof}

By Theorem~7.5.6 of \cite{NF},
\begin{equation}\label{dimsigmapmneq}
 \dim\Sigma_\pm(n)=
 \begin{cases}
  \Big\lfloor\frac{(n-3)^2}4\Big\rfloor&\text{for }n\geq5,\\[1ex]
  0&\text{for }n\leq4.
 \end{cases}
\end{equation}
Comparison of \eqref{dimAstarneq} and \eqref{dimsigmapmneq} shows that the map
\begin{equation}\label{Astarisoeq}
 A^*(n)\longrightarrow\Sigma_\pm(n),\qquad f\longmapsto f\pm Lf,
\end{equation}
is an isomorphism. The newvector in $\sigma_\chi^\pm$ is given by
\begin{equation}\label{fpmeq}
 f^\pm_{\rm new}(g)=\begin{cases}
         \chi(h)&\text{if }g= h d_{\varpi, \varpi}k\text{ with }h\in H,\;k\in\K{5},\\
         \pm\chi(h)&\text{if }g= hg_\chi d_{\varpi, \varpi}k\text{ with }h\in H,\;k\in\K{5},\\
         0&\text{otherwise},
        \end{cases}
\end{equation}
where $g_\chi$ is as in \eqref{gchiuneq}.
Using the notations \eqref{Hprimedefeq} and \eqref{chipmdefeq}, this can also be written as
\begin{equation}\label{fpmeq2}
 f^\pm_{\rm new}(g)=\begin{cases}
         \chi^\pm(h)&\text{if }g= h d_{\varpi, \varpi}k\text{ with }h\in H',\;k\in\K{5},\\
         0&\text{otherwise}.
        \end{cases}
\end{equation}
\subsection{An expression for the newvector in terms of the minimal vector}\label{primsec}
Let $\chi=\chi_{1,1,t}$ with $t\in\OF^\times$, and assume that the central character $\omega$ of $\pi_\chi$ is trivial. As above, $\pi_\chi=\cInd^G_H(\chi)$ decomposes into two irreducible representations $\sigma_\chi^+$ and $\sigma_\chi^-$, with spaces $\Sigma_\pm$ as in \eqref{Sigmazetadefe2}.
Let $f^\pm_{\rm new}\in\Sigma_\pm$ be the local newvector of $\sigma_\chi^\pm$, given explicitly in \eqref{fpmeq} and \eqref{fpmeq2}. We define the \emph{minimal vector} $f^{\pm}_{\rm min}\in\Sigma_\pm$ by
\begin{align}\label{fmindefeq}
 f^\pm_{\rm min}(g)
      &=\begin{cases}
         \chi(h)&\text{if }g=h\text{ with }h\in H,\\
         \pm\chi(h)&\text{if }g=hg_\chi\text{ with }h\in H,\\
         0&\text{otherwise,}
        \end{cases}\nonumber\\
      &=\begin{cases}
         \chi^\pm(h)&\text{if }g=h\text{ with }h\in H',\\
         0&\text{otherwise}.
        \end{cases}
\end{align}
In Proposition~\ref{newminprop} below we will express $f^\pm_{\rm new}$ in terms of $f^\pm_{\rm min}$.

\begin{lemma}\label{Hcosetlemma1}
 With $\Gamma_1(\p)=\mat{1+\p}{\OF}{\p}{1+\p}$, we have
 \begin{align}\label{Hcosetlemma1eq1}
  \Gamma_1(\p)\mat{\varpi}{}{}{1}\GL(2,\OF)&=\bigsqcup_{u,v\in(\OF/\p)^\times}\bigsqcup_{x\in\OF/\p^2}\Gamma_1(\p)\mat{\varpi}{}{}{1}\mat{u}{}{}{v}\mat{1}{}{x}{1}\mat{}{1}{-1}{}\nonumber\\
   &\qquad\sqcup\bigsqcup_{u,v\in(\OF/\p)^\times}\bigsqcup_{x\in\OF/\p}\Gamma_1(\p)\mat{\varpi}{}{}{1}\mat{u}{}{}{v}\mat{1}{}{x\varpi}{1}.
 \end{align}
\end{lemma}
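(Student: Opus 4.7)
The plan is to reformulate the claim as a coset decomposition in $\GL(2,\OF)$. Set $d := \mat{\varpi}{}{}{1}$ and
\[
 \Gamma := d^{-1}\Gamma_1(\p)d \cap \GL(2,\OF) = \mat{1+\p}{\OF}{\p^2}{1+\p} \cap \GL(2,\OF).
\]
Then $\Gamma_1(\p)dg_1 = \Gamma_1(\p)dg_2$ precisely when $g_1 g_2^{-1} \in \Gamma$, so the lemma amounts to showing that, as $(u,v,x)$ range over the stated sets, the matrices $\mat{u}{}{}{v}\mat{1}{}{x}{1}w$ (call these Type~1) and $\mat{u}{}{}{v}\mat{1}{}{x\varpi}{1}$ (Type~2) form a complete set of representatives for $\Gamma\bs\GL(2,\OF)$.

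I would stratify $\GL(2,\OF)$ by the $(2,1)$-entry. Let $\Gamma_0(\p)\subset\GL(2,\OF)$ denote the subgroup whose $(2,1)$-entry lies in $\p$. The Type~2 representatives visibly lie in $\Gamma_0(\p)$ (their $c$-entry is $xv\varpi \in \p$), while the Type~1 representatives have $c = -v \in \OF^\times$ and lie in the complement. So it suffices to verify that each family hits its corresponding stratum exactly once.

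For $g = \mat{a}{b}{c_0\varpi}{d}\in\Gamma_0(\p)$, solving the system $\gamma g = \mat{u}{0}{xv\varpi}{v}$ entry by entry for $\gamma = \mat{1+\varpi e_1}{e_2}{\varpi^2 e_3}{1+\varpi e_4}\in\Gamma$ shows that for \emph{any} choice of lifts $u\in(\OF/\p)^\times$ of $a\bmod\p$, $v\in(\OF/\p)^\times$ of $d\bmod\p$, and $x\in\OF/\p$ of $c_0/d\bmod\p$, there is a unique $\gamma$ doing the job. For $g = \mat{a}{b}{c}{d}$ with $c\in\OF^\times$, left-multiplying by $\mat{1}{-a/c}{}{1}\in\Gamma$ produces the form $\mat{0}{\beta}{c}{d}$ with $\beta = -\det(g)/c\in\OF^\times$. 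The $\Gamma$-stabilizer of ``$(1,1)=0$'' consists of $\mat{1+\varpi e_1}{0}{\varpi^2 e_3}{1+\varpi e_4}$, which independently rescales $\beta$ and $c$ by $1+\p$ and, via the $e_3$-freedom, shifts $d$ within its coset modulo $\p^2$. Hence for any lifts $u\equiv\beta\bmod\p$, $-v\equiv c\bmod\p$, $xv\equiv d\bmod\p^2$, there is a unique such $\gamma'$ sending $g$ to $\mat{0}{u}{-v}{xv} = \mat{u}{}{}{v}\mat{1}{}{x}{1}w$.

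A count confirms the decomposition: Type~1 contributes $(q-1)^2 q^2$ cosets and Type~2 contributes $(q-1)^2 q$, summing to $(q-1)^2 q(q+1) = [\GL(2,\OF):\Gamma_0(\p)]\cdot[\Gamma_0(\p):\Gamma] = (q+1)(q-1)^2 q$. The main obstacle is just careful entry-by-entry bookkeeping modulo $\p^2$; the core of the argument is straightforward row reduction.
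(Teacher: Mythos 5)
Your overall strategy is the same as the paper's. You conjugate to $\Gamma := d^{-1}\Gamma_1(\p)d\cap\GL(2,\OF)=\mat{1+\p}{\OF}{\p^2}{1+\p}$ and compute $\Gamma\backslash\GL(2,\OF)$ by Bruhat stratification on the $(2,1)$-entry; the paper's terse proof (``This is an exercise, using \eqref{Hcoseteq1}'') defers to the $\Gamma_0(\p^2)$-coset decomposition and the further $(q-1)^2$ diagonal refinement, which is exactly what your two strata combined with the $\mat{u}{}{}{v}$-factors encode. Your index check $(q-1)^2q^2+(q-1)^2q=[\GL(2,\OF):\Gamma]$ is a valid sanity check, and the Type~2 (small-cell) conditions you state are correct.

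There is, however, a computational slip in the Type~1 (big-cell) case. After reducing to $\mat{0}{\beta}{c}{d}$, the stabilizer element $\gamma'=\mat{1+\varpi e_1}{0}{\varpi^2 e_3}{1+\varpi e_4}$ carries the $(2,2)$-entry to $\varpi^2 e_3\beta + (1+\varpi e_4)d$; once you pin $(1+\varpi e_4)c=-v$, this ranges over $-(v/c)d+\p^2$, \emph{not} $d+\p^2$. You dropped the $(1+\varpi e_4)$-rescaling of $d$. The correct matching condition is therefore $cx\equiv -d\pmod{\p^2}$, i.e.\ $x\equiv -d/c\pmod{\p^2}$, not $xv\equiv d\pmod{\p^2}$; the latter depends on which lift of $v$ you chose and disagrees with the former modulo $\p^2$ whenever $v\neq -c$ in $\OF$. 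One can also read this off directly: for $g=\mat{a}{b}{c}{d}$,
\[
 g\,\mat{0}{u}{-v}{vx}^{-1}=\mat{(ax+b)/u}{\ -a/v}{(cx+d)/u}{\ -c/v},
\]
and the requirement that the $(2,1)$-entry lie in $\p^2$ reads $cx+d\in\p^2$, while the $(1,1)$- and $(2,2)$-entries force $u\equiv ax+b$ and $v\equiv -c$ modulo $\p$. The lemma itself is unaffected — the class of $x$ in $\OF/\p^2$ is still uniquely determined by $g$, so the disjoint covering claim stands — but the explicit parametrization you wrote down is not the right one.
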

\begin{proof}
This is an exercise using
\begin{equation}\label{Hcoseteq1}
 \GL(2,\OF)=
 \bigsqcup_{x\in\OF/\p^2}
 \Gamma_0(\p^2)\mat{1}{}{x}{1}\mat{}{1}{-1}{}
 \sqcup
 \bigsqcup_{x\in\OF/\p}
 \Gamma_0(\p^2)\mat{1}{}{x\varpi}{1},
\end{equation}
which in turn follows from the easy-to-see decompositions $\Gamma_0(\p)=\bigsqcup_{x\in\OF/\p}\mat{1}{}{x\varpi}{1}\Gamma_0(\p^2)$ and $\GL(2,\OF)=\bigsqcup_{x\in\OF/\p}
 \Gamma_0(\p)\mat{1}{}{x}{1}\mat{}{1}{-1}{}
 \sqcup\Gamma_0(\p)$.
\end{proof}

\begin{lemma}\label{Hcosetlemma2}
 \begin{align}\label{Hcosetlemma2eq1}
  H d_{\varpi, \varpi}\Kl{5}&=\bigsqcup_{u,v\in(\OF/\p)^\times}\bigsqcup_{x\in\OF/\p^2}H d_{\varpi, \varpi}\begin{bsmallmatrix}uv\\&u\\&&v\\&&&1\end{bsmallmatrix}\begin{bsmallmatrix}1\\&1\\&x&1\\&&&1\end{bsmallmatrix}\begin{bsmallmatrix}1\\&&1\\&-1\\&&&1\end{bsmallmatrix}\nonumber\\
   &\qquad\sqcup\bigsqcup_{u,v\in(\OF/\p)^\times}\bigsqcup_{x\in\OF/\p}H d_{\varpi, \varpi}\begin{bsmallmatrix}uv\\&u\\&&v\\&&&1\end{bsmallmatrix}\begin{bsmallmatrix}1\\&1\\&x\varpi&1\\&&&1\end{bsmallmatrix}.
 \end{align}
\end{lemma}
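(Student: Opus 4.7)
The plan is to reduce to Lemma~\ref{Hcosetlemma1} via the Iwahori decomposition of $\Kl{5}$ with respect to the Klingen parabolic $P_Q=M_Q U_Q$, where $M_Q\cong \GL_2\times\GL_1$ is the Levi and $\bar U_Q$ denotes the opposite unipotent radical. Abbreviating $d:=d_{\varpi,\varpi}=\mathrm{diag}(\varpi^3,\varpi^2,\varpi,1)$, the first step is to absorb the two unipotent factors into $H$. The conjugation rule $(dkd^{-1})_{ij}=\varpi^{a_i-a_j}k_{ij}$, with $(a_1,a_2,a_3,a_4)=(3,2,1,0)$, shows that $d\,(U_Q\cap\Kl{5})\,d^{-1}\subset K'$ (upper-root entries are multiplied by positive powers of $\varpi$) and $d\,(\bar U_Q\cap\Kl{5})\,d^{-1}\subset K'$ (each of the five opposite-root entries has valuation $\geq 5$ and is divided by at most $\varpi^3$, landing in $\p^2$). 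Writing $k=u\,m\,\bar u$ in the Iwahori form and using that $M_Q\cap\Kl{5}$ normalizes $\bar U_Q\cap\Kl{5}$, one then obtains $H\,d\,k=H\,d\,m$. Hence $H\,d\,\Kl{5}=H\,d\,(M_Q\cap\Kl{5})$, and the $H$-orbits are in bijection with $\bigl(d^{-1}Hd\cap M_Q\cap\Kl{5}\bigr)\backslash(M_Q\cap\Kl{5})$.

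The second step identifies this quotient with a coset space of $\GL_2(\OF)$. Using the embedding $\iota\colon\GL_2\to M_Q$, $A\mapsto\mathrm{diag}(\det A,A,1)$, one has $M_Q\cap\Kl{5}=Z(\OF)\cdot\iota(\GL_2(\OF))$ as an internal direct product, and $Z\subset d^{-1}Hd$. A direct entrywise comparison against the shape of $K'$ in \eqref{Kpdefeq} shows that $d\iota(A)d^{-1}\in H$ if and only if $A\in\Gamma^*$, where
\[\Gamma^*:=\bigl\{A\in\GL_2(\OF):a_{11},a_{22}\in 1+\p,\ a_{21}\in\p^2\bigr\}=\mat{\varpi^{-1}}{}{}{1}\Gamma_1(\p)\mat{\varpi}{}{}{1}\cap\GL_2(\OF).\]
Therefore the $H$-orbits on $H\,d\,\Kl{5}$ are in natural bijection with $\Gamma^*\backslash\GL_2(\OF)$.

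Finally, the standard double-coset/coset bijection $\Gamma_1(\p)\backslash\Gamma_1(\p)\mat{\varpi}{}{}{1}\GL_2(\OF)\longleftrightarrow\Gamma^*\backslash\GL_2(\OF)$ converts Lemma~\ref{Hcosetlemma1} into an explicit set of representatives for $\Gamma^*\backslash\GL_2(\OF)$, namely $\mat{u}{}{}{v}\mat{1}{}{x}{1}\mat{}{1}{-1}{}$ for $u,v\in(\OF/\p)^\times,\ x\in\OF/\p^2$, together with $\mat{u}{}{}{v}\mat{1}{}{x\varpi}{1}$ for $u,v\in(\OF/\p)^\times,\ x\in\OF/\p$. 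Lifting through $\iota$ (which sends $\mat{u}{}{}{v}$ to $\mathrm{diag}(uv,u,v,1)$, matching the middle factor in the statement, and sends the $\SL_2$ factors to the corresponding block-embedded matrices with $1$'s at the outer corners) and multiplying by $Hd$ on the left produces exactly the union claimed in \eqref{Hcosetlemma2eq1}.

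The main obstacle is the two entrywise verifications: the absorption $d\,(\bar U_Q\cap\Kl{5})\,d^{-1}\subset K'$, and the identification $\iota^{-1}\bigl(d^{-1}Hd\cap\iota(\GL_2(\OF))\bigr)=\Gamma^*$. Both are elementary but require careful bookkeeping of valuations across all five Klingen root positions together with the diagonal and off-diagonal conditions defining $K'$. Disjointness of the cosets in \eqref{Hcosetlemma2eq1} then follows automatically from the disjointness in Lemma~\ref{Hcosetlemma1}, transported through the bijections above.
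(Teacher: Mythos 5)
Your proof is correct and takes essentially the same route as the paper: decompose $\Kl{5}$ along the Klingen parabolic, absorb the conjugated unipotent factors (and the center) into $H$, reduce to the embedded $\GL(2,\OF)$, and invoke Lemma~\ref{Hcosetlemma1}. One small improvement over the paper's terse presentation: you correctly observe that disjointness of the cosets in \eqref{Hcosetlemma2eq1} is inherited from Lemma~\ref{Hcosetlemma1} via the identity $\iota^{-1}\bigl(d^{-1}Hd\cap\iota(\GL_2(\OF))\bigr)=\Gamma^*=\mat{\varpi^{-1}}{}{}{1}\Gamma_1(\p)\mat{\varpi}{}{}{1}\cap\GL_2(\OF)$, whereas the paper simply asserts that disjointness is ``easy to check'' without giving this structural reason.
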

\begin{proof}
Any element of $\Kl{5}$ can be written in the form
\begin{equation}\label{Hcosetlemma2eq2}
 \begin{bsmallmatrix}1&\OF&\OF&\OF\\&1&&\OF\\&&1&\OF\\&&&1\end{bsmallmatrix}\begin{bsmallmatrix}1\\\p^5&1\\\p^5&&1\\\p^5&\p^5&\p^5&1\end{bsmallmatrix}\begin{bsmallmatrix}\det(A)\\&A\\&&1\end{bsmallmatrix}\begin{bsmallmatrix}z\\&z\\&&z\\&&&z\end{bsmallmatrix}
\end{equation}
with $A\in\GL(2,\OF)$ and $z\in\OF^\times$. The upper and lower triangular part can be absorbed into~$H$ after commuting it past $d_{\varpi, \varpi}$. Note that the center is also absorbed, so that
\begin{equation}\label{Hcosetlemma2eq3}
 H d_{\varpi, \varpi}\Kl{5}=\bigcup_{A\in\GL(2,\OF)}H d_{\varpi, \varpi}\begin{bsmallmatrix}\det(A)\\&A\\&&1\end{bsmallmatrix}.
\end{equation}
By Lemma~\ref{Hcosetlemma1} we get a decomposition as in \eqref{Hcosetlemma2eq1}, even though not necessarily disjoint. However, the disjointness is then easy to check by direct computation.
\end{proof}

\begin{lemma}\label{Hcosetlemma3}
\begin{align}\label{Hcosetlemma3eq0}
  &H d_{\varpi, \varpi}\K{5}\nonumber\\
  &=\bigsqcup_{u,v\in(\OF/\p)^\times}\bigsqcup_{\substack{x\in\OF/\p^2\\y\in\OF/\p^2}}H d_{\varpi, \varpi}\begin{bsmallmatrix}uv\\&u\\&&v\\&&&1\end{bsmallmatrix}\begin{bsmallmatrix}1\\&1\\&x&1\\&&&1\end{bsmallmatrix}\begin{bsmallmatrix}1&&&y\varpi^{-5}\\&1\\&&1\\&&&1\end{bsmallmatrix}\begin{bsmallmatrix}1\\&&1\\&-1\\&&&1\end{bsmallmatrix}\nonumber\\
   &\qquad\sqcup\bigsqcup_{u,v\in(\OF/\p)^\times}\bigsqcup_{\substack{x\in\OF/\p\\y\in\OF/\p^2}}H d_{\varpi, \varpi}\begin{bsmallmatrix}uv\\&u\\&&v\\&&&1\end{bsmallmatrix}\begin{bsmallmatrix}1\\&1\\&x\varpi&1\\&&&1\end{bsmallmatrix}\begin{bsmallmatrix}1&&&y\varpi^{-5}\\&1\\&&1\\&&&1\end{bsmallmatrix}\nonumber\\
  &\qquad\sqcup\bigsqcup_{u,v\in(\OF/\p)^\times}\bigsqcup_{\substack{x\in\OF/\p^2\\z\in\OF/\p}}H d_{\varpi, \varpi}\begin{bsmallmatrix}uv\\&u\\&&v\\&&&1\end{bsmallmatrix}\begin{bsmallmatrix}1\\&1\\&x&1\\&&&1\end{bsmallmatrix}\begin{bsmallmatrix}1&&&z\varpi^{-4}\\&1\\&&1\\&&&1\end{bsmallmatrix}\begin{bsmallmatrix}&&&\varpi^{-5}\\&&1\\&-1\\-\varpi^5\end{bsmallmatrix}\nonumber\\
   &\qquad\sqcup\bigsqcup_{u,v\in(\OF/\p)^\times}\bigsqcup_{\substack{x\in\OF/\p\\z\in\OF/\p}}H d_{\varpi, \varpi}\begin{bsmallmatrix}uv\\&u\\&&v\\&&&1\end{bsmallmatrix}\begin{bsmallmatrix}1\\&1\\&x\varpi&1\\&&&1\end{bsmallmatrix}\begin{bsmallmatrix}1&&&z\varpi^{-4}\\&1\\&&1\\&&&1\end{bsmallmatrix}\begin{bsmallmatrix}&&&\varpi^{-5}\\&1\\&&1\\-\varpi^5\end{bsmallmatrix}.
\end{align}
The same decomposition holds with $H'$ instead of $H$. (See \eqref{Hprimedefeq} for the definition of $H'$.)
\end{lemma}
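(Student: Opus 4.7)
The plan is to refine Lemma~\ref{Hcosetlemma2} by incorporating the additional cosets arising from elements of $\K{5}$ not in $\Kl{5}$. The $H$-left-cosets in $H d_{\varpi,\varpi}\K{5}$ are in natural bijection with $H_0\backslash\K{5}$, where $H_0 := d_{\varpi,\varpi}^{-1}Hd_{\varpi,\varpi}\cap\K{5}$. The first step is to compute $H_0$ explicitly: conjugating the defining pattern of $K'$ by $d_{\varpi,\varpi}^{-1}$ rescales the off-diagonal entries by powers of $\varpi$, and intersecting with the paramodular pattern of $\K{5}$ gives a subgroup with explicit entry constraints, notably $\p^2$ in position $(3,2)$ and $\varpi^{-3}\OF$ in position $(1,4)$.

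Next, I would decompose $\K{5}$ into Bruhat-type pieces and enumerate canonical representatives of $H_0\backslash\K{5}$. The four cases in the lemma correspond to four Bruhat positions: Cases 1 and 2 correspond to representatives of the form (Klingen element) $\cdot\, u(y)$, where $u(y) = \begin{bsmallmatrix}1&&&y\varpi^{-5}\\&1\\&&1\\&&&1\end{bsmallmatrix}$, while Cases 3 and 4 correspond to representatives involving the two Atkin--Lehner-type elements appearing in the statement. Within each Bruhat position, the $u, v, x$ parameters from Lemma~\ref{Hcosetlemma2} reappear. A key computation is the commutation
\[
d_{\varpi,\varpi}\cdot u(y)\cdot d_{\varpi,\varpi}^{-1} = \begin{bsmallmatrix}1&&&y\varpi^{-2}\\&1\\&&1\\&&&1\end{bsmallmatrix},
\]
which lies in $H$ precisely when $y\in\p^2$, accounting for the modulus $\OF/\p^2$ on $y$ in Cases 1 and 2; an analogous computation for the Atkin--Lehner cases yields the modulus $\OF/\p$ on $z$ in Cases 3 and 4.

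Disjointness of the final decomposition would be verified by comparing Bruhat cells: Cases 1 and 2 are distinguished from Cases 3 and 4 by the valuation of the $(4,1)$-entry of the representative (zero in the first two, exactly $5$ in the last two), and Case 1 (resp. Case 3) is distinguished from Case 2 (resp. Case 4) by the valuation of the $(3,2)$-entry. Finally, the $H'$-version of the statement follows because $H' = H \sqcup g_\chi H$ with $g_\chi^2 \in Z \subset H$; using the expression \eqref{gchiuneq} for $g_\chi$ in terms of $u_5$ and a diagonal matrix, one checks by direct computation that left multiplication by $g_\chi$ permutes the listed cosets. The main obstacle throughout is the careful bookkeeping for the Bruhat decomposition of $\K{5}$ and matching the chosen representatives precisely to the four cases in the statement.
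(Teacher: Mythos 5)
Your overall strategy coincides with the paper's: combine the decomposition of $H d_{\varpi,\varpi}\Kl{5}$ from Lemma~\ref{Hcosetlemma2} with the decomposition of $\K{5}$ into $\Kl{5}$-cosets (which the paper takes from Lemma~3.3.1 of \cite{NF} and inverts; your ``Bruhat-position'' framing is a variant of the same idea). Your bijection $H\backslash H d_{\varpi,\varpi}\K{5} \leftrightarrow H_0\backslash\K{5}$ is the correct way to organize the count, and your key commutation $d_{\varpi,\varpi}\,u(y\varpi^{-5})\,d_{\varpi,\varpi}^{-1}=u(y\varpi^{-2})$ is the right mechanism behind reducing $y$ from $\OF/\p^5$ to $\OF/\p^2$. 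One small thing you leave implicit: the reduction of the $y$ (resp.\ $z$) range must be carried out after conjugating $u(\cdot)$ not only past $d_{\varpi,\varpi}$ but also past the diagonal, the lower-unipotent $n(x)$, and the $s_2$ factor in the Klingen representative. This works because all of those normalize the $(1,4)$-root subgroup up to a unit, but it should be stated; otherwise the passage from $\OF/\p^5$ to $\OF/\p^2$ is not justified.

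The genuine error is in your argument for the $H'$-version. You claim that ``left multiplication by $g_\chi$ permutes the listed cosets.'' It does not. Every representative $r_i = d_{\varpi,\varpi}k$ with $k\in\K{5}$ has $v(\mu(r_i))=3$, while $v(\mu(g_\chi))=1$ and $v(\mu(h))$ is even for every $h\in H$. Hence $g_\chi H r_i$ consists entirely of elements with multiplier valuation $\equiv 0 \pmod 2$, whereas each $H r_j$ consists of elements with multiplier valuation $\equiv 1 \pmod 2$; the sets $g_\chi H r_i$ and $H r_j$ are therefore disjoint for \emph{every} pair $(i,j)$. So $g_\chi$ sends each listed coset out of the list, not to another member of it. The correct argument (and the one the paper uses, phrased as ``by considering multipliers'') is precisely this observation: $H'd_{\varpi,\varpi}\K{5} = Hd_{\varpi,\varpi}\K{5} \sqcup g_\chi Hd_{\varpi,\varpi}\K{5} = \bigl(\bigsqcup_i Hr_i\bigr) \sqcup \bigl(\bigsqcup_i g_\chi Hr_i\bigr) = \bigsqcup_i H'r_i$, where the disjointness of the $H'r_i$ follows because any overlap would force either two distinct $Hr_i$'s to meet (impossible by the $H$-version) or an $Hr_i$ to meet a $g_\chi Hr_j$ (impossible by multiplier parity). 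Replace your permutation claim with this parity argument and the $H'$ step is correct.
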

\begin{proof}
By considering multipliers, it is easy to see that if $Hg\K{5}=\sqcup Hr_i$ with some representatives $r_i$, then $H'g\K{5}=\sqcup H'r_i$. Hence the last assertion follows once we know~\eqref{Hcosetlemma3eq0}.

By Lemma 3.3.1 of \cite{NF} there is a disjoint decomposition
\begin{equation}\label{paramodularKlingendecompositioneq}
  \K{n}=\bigsqcup\limits_{y\in\OF/\p^n}
  \begin{bsmallmatrix}1&&&y\varpi^{-n}\\&1\\&&1\\&&&1\end{bsmallmatrix}\Kl{n}\sqcup\bigsqcup\limits_{z\in\OF/\p^{n-1}}
  t_n\begin{bsmallmatrix}1&&&z\varpi^{-n+1}\\&1\\&&1\\&&&1\end{bsmallmatrix}\Kl{n},
\end{equation}
where
\begin{equation}\label{tneq}
 t_n=\begin{bsmallmatrix}&&&-\varpi^{-n}\\&1\\&&1\\\varpi^n\end{bsmallmatrix}.
\end{equation}
Hence
\begin{equation}\label{paramodularKlingendecompositioneq2}
  \K{5}=\bigsqcup\limits_{y\in\OF/\p^5}
  \Kl{5}\begin{bsmallmatrix}1&&&y\varpi^{-5}\\&1\\&&1\\&&&1\end{bsmallmatrix}\sqcup\bigsqcup\limits_{z\in\OF/\p^4}
  \Kl{5}\begin{bsmallmatrix}1&&&z\varpi^{-4}\\&1\\&&1\\&&&1\end{bsmallmatrix}\begin{bsmallmatrix}&&&\varpi^{-5}\\&1\\&&1\\-\varpi^5\end{bsmallmatrix}.
\end{equation}
Using Lemma~\ref{Hcosetlemma2}, we get the required decomposition, which can be checked to be disjoint.
\end{proof}

\begin{proposition}\label{newminprop}
 \begin{align}\label{newminpropeq1}
  &f^\pm_{\rm new}=\sum_{u,v\in(\OF/\p)^\times}\sum_{\substack{x\in\OF/\p^2\\y\in\OF/\p^2}}\sigma_\chi^\pm(\begin{bsmallmatrix}1\\&&-1\\&1\\&&&1\end{bsmallmatrix}\begin{bsmallmatrix}1&&&y\varpi^{-5}\\&1\\&&1\\&&&1\end{bsmallmatrix}\begin{bsmallmatrix}1\\&1\\&x&1\\&&&1\end{bsmallmatrix}\begin{bsmallmatrix}uv\\&u\\&&v\\&&&1\end{bsmallmatrix}\begin{bsmallmatrix}1\\&\varpi\\&&\varpi^2\\&&&\varpi^3\end{bsmallmatrix})f^\pm_{\rm min}\nonumber\\
   &+\sum_{u,v\in(\OF/\p)^\times}\sum_{\substack{x\in\OF/\p\\y\in\OF/\p^2}}\sigma_\chi^\pm(\begin{bsmallmatrix}1&&&y\varpi^{-5}\\&1\\&&1\\&&&1\end{bsmallmatrix}\begin{bsmallmatrix}1\\&1\\&x\varpi&1\\&&&1\end{bsmallmatrix}\begin{bsmallmatrix}uv\\&u\\&&v\\&&&1\end{bsmallmatrix}\begin{bsmallmatrix}1\\&\varpi\\&&\varpi^2\\&&&\varpi^3\end{bsmallmatrix})f^\pm_{\rm min}\nonumber\\
  &+\sum_{u,v\in(\OF/\p)^\times}\sum_{\substack{x\in\OF/\p^2\\z\in\OF/\p}}\sigma_\chi^\pm(\begin{bsmallmatrix}&&&-\varpi^{-5}\\&&-1\\&1\\\varpi^5\end{bsmallmatrix}\begin{bsmallmatrix}1&&&z\varpi^{-4}\\&1\\&&1\\&&&1\end{bsmallmatrix}\begin{bsmallmatrix}1\\&1\\&x&1\\&&&1\end{bsmallmatrix}\begin{bsmallmatrix}uv\\&u\\&&v\\&&&1\end{bsmallmatrix}\begin{bsmallmatrix}1\\&\varpi\\&&\varpi^2\\&&&\varpi^3\end{bsmallmatrix})f^\pm_{\rm min}\nonumber\\
   &+\sum_{u,v\in(\OF/\p)^\times}\sum_{\substack{x\in\OF/\p\\z\in\OF/\p}}\sigma_\chi^\pm(\begin{bsmallmatrix}&&&-\varpi^{-5}\\&1\\&&1\\\varpi^5\end{bsmallmatrix}\begin{bsmallmatrix}1&&&z\varpi^{-4}\\&1\\&&1\\&&&1\end{bsmallmatrix}\begin{bsmallmatrix}1\\&1\\&x\varpi&1\\&&&1\end{bsmallmatrix}\begin{bsmallmatrix}uv\\&u\\&&v\\&&&1\end{bsmallmatrix}\begin{bsmallmatrix}1\\&\varpi\\&&\varpi^2\\&&&\varpi^3\end{bsmallmatrix})f^\pm_{\rm min}.
 \end{align}
\end{proposition}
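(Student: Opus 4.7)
The plan is to verify the identity coset-by-coset using the decomposition of the support of $f^\pm_{\rm new}$ supplied by Lemma~\ref{Hcosetlemma3}. The basic observation is that, since $f^\pm_{\rm min}$ is supported on $H'$ with $f^\pm_{\rm min}(h) = \chi^\pm(h)$, its right translate $\sigma_\chi^\pm(g) f^\pm_{\rm min}$ is supported precisely on $H' g^{-1}$ and takes the value $\chi^\pm(h)$ at $hg^{-1}$. Meanwhile, by \eqref{fpmeq2}, $f^\pm_{\rm new}$ is supported on $H' d_{\varpi,\varpi}\K{5}$ and takes the value $\chi^\pm(h)$ on $h d_{\varpi,\varpi} k$. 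Hence it suffices to show that, as $g$ ranges over the group elements on the right-hand side of \eqref{newminpropeq1}, the inverses $g^{-1}$ form a complete and non-repeating system of representatives for the $H'$-cosets in $H' d_{\varpi,\varpi}\K{5}$ listed in Lemma~\ref{Hcosetlemma3}, with central scalars in $Z$ absorbed freely because $\omega$ is trivial.

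The key algebraic identity driving the inversion is that the final matrix $\begin{bsmallmatrix}1\\&\varpi\\&&\varpi^2\\&&&\varpi^3\end{bsmallmatrix}$ equals $\varpi^{3}\, d_{\varpi,\varpi}^{-1}$; its inverse therefore contributes $\varpi^{-3}\, d_{\varpi,\varpi}$, producing the leftmost factor $d_{\varpi,\varpi}$ in the form of Lemma~\ref{Hcosetlemma3} up to a central scalar. Using also that $d_{\varpi,\varpi}$ commutes with $\begin{bsmallmatrix}uv\\&u\\&&v\\&&&1\end{bsmallmatrix}$, a direct inversion of the first summand yields
\[
g(u,v,x,y)^{-1} = \varpi^{-3}\,d_{\varpi,\varpi}\begin{bsmallmatrix}(uv)^{-1}\\&u^{-1}\\&&v^{-1}\\&&&1\end{bsmallmatrix}\begin{bsmallmatrix}1\\&1\\&-x&1\\&&&1\end{bsmallmatrix}\begin{bsmallmatrix}1&&&-y\varpi^{-5}\\&1\\&&1\\&&&1\end{bsmallmatrix}\begin{bsmallmatrix}1\\&&1\\&-1\\&&&1\end{bsmallmatrix},
\]
which is exactly the first-type representative of Lemma~\ref{Hcosetlemma3} at parameters $(u^{-1},v^{-1},-x,-y)$, times the central scalar $\varpi^{-3}$. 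Since $(u,v,x,y)\mapsto (u^{-1},v^{-1},-x,-y)$ is a bijection on $(\OF/\p)^{\times 2}\times (\OF/\p^2)^2$, the first sum in \eqref{newminpropeq1} bijectively visits every first-type coset and places the correct value $\chi^\pm(h)$ on each.

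The remaining three summands are handled in the same way. A short direct verification shows that the leftmost matrix factor in the second, third, and fourth summands is respectively $I$, or the inverse of the Atkin--Lehner-type element $\begin{bsmallmatrix}&&&\varpi^{-5}\\&&1\\&-1\\-\varpi^5\end{bsmallmatrix}$, or the inverse of $\begin{bsmallmatrix}&&&\varpi^{-5}\\&1\\&&1\\-\varpi^5\end{bsmallmatrix}$; hence after inversion each summand becomes the corresponding coset representative of Lemma~\ref{Hcosetlemma3} with parameters bijectively reparameterized by $u\mapsto u^{-1}$, $v\mapsto v^{-1}$, and sign-flipping on $x$ and $y$ (or $z$). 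Summing the four families then reproduces $f^\pm_{\rm new}$. The main obstacle is the bookkeeping for the anti-diagonal factors in the third and fourth summands, but this amounts to a short matrix computation with no conjugation issues beyond the diagonal-diagonal commutation already used for the first summand.
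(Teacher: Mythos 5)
Your proof is correct and takes essentially the same route as the paper: both reduce the identity to the coset decomposition of $H' d_{\varpi,\varpi}\K{5}$ from Lemma~\ref{Hcosetlemma3}, noting that each summand on the right of \eqref{newminpropeq1} is a translate of $f^\pm_{\rm min}$ supported on a single $H'$-coset with the correct value $\chi^\pm(h)$. The reparameterization $(u,v,x,y)\mapsto(u^{-1},v^{-1},-x,-y)$ and the absorption of the central scalar $\varpi^{-3}$ into $Z\subset H'$ are the only bookkeeping steps, and you have handled them accurately.
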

\begin{proof}
Let $f_0$ be as in \eqref{f0defeq}. If a function $f$ in the standard space of $\pi_\chi$ is supported on $Hr\K{n}$, is right $\K{n}$-invariant, and satisfies $f(r)=c$, and if $Hr\K{n}=\bigsqcup_i Hrk_i$ for some representatives $k_i\in\K{n}$, then $f(g)=c\sum_if_0(gk_i^{-1}r^{-1})$ for all $g\in G$. Hence the result follows from Lemma~\ref{Hcosetlemma3}.
\end{proof}

\section{Matrix coefficients and local integrals}\label{s:localprelims}
In this section, we study matrix coefficients and various local integrals associated to test vectors in the  simple supercuspidal representations $\sigma_{\chi}^\pm$ defined in \eqref{chipmindeq}. Our test vectors will be (translates of) either the minimal vector $f^\pm_{\rm min}$  or the  paramodular newvector $f^\pm_{\rm new}$. The matrix coefficient for the minimal vector is computed in Sect. \ref{s:matrixc}, which leads to a formula for the formal degree of $\sigma_{\chi}^\pm$ in Proposition~\ref{p:formaldeg}. In Sect.~\ref{s:localwhittakerintegralminimal}, we compute the local Whittaker integral for the minimal vector, and in Sect.~\ref{s:localnov}, we  compute the local Novodvorsky integral for a certain diagonal translate of the minimal vector. We write down a formula for the matrix coefficient of the paramodular newvector  evaluated on the unipotent radical in Sect.~\ref{MC-unipotent} and we use this to compute the local Whittaker integral for the newvector in Sect.~\ref{s:localwhitnew}. In Sect.~\ref{locbessec} we compute the local Bessel integral of Gan--Gross--Prasad type for translates of the minimal vector.
\subsection{Inner product and matrix coefficients}\label{s:matrixc}
We define an inner product on $A$ (the standard model of $\pi_\chi=\cInd_H^G(\chi)$) by
\begin{equation}\label{innerproduct}
 \langle f_1,f_2\rangle=\vol(Z \bs H')^{-1}\int\limits_{Z\backslash G}f_1(x)\overline{f_2(x)}\,dx.
\end{equation}
This is well-defined, since the support of any $f\in A$ is contained in a subset of $G$ of the form $HC=ZK'C$ where $C$ is compact, and independent of the normalization of the Haar measure $dx$ on $Z \bs G$. This gives us a $G$-invariant  Hermitian pairing $(v_1, v_2) \mapsto \langle v_1,v_2\rangle$  on each of the representations $\sigma_\chi^+$, $\sigma_\chi^-$.

By definition, the matrix coefficient attached to the pair $(f_1,f_2)$ is the function
\begin{equation}\label{matrixcoeff}
 \Phi_{f_1,f_2}(g)=\langle\pi_\chi(g)f_1,f_2\rangle.
\end{equation}
Let $\Phi_{\rm min}^\pm$ be the matrix coefficient corresponding to the pair $(f^\pm_{\rm min},f^\pm_{\rm min})$, where $f^\pm_{\rm min}$ is the minimal vector defined in \eqref{fmindefeq}. Then
\begin{align}\label{Phiminfmineq}
 \Phi^\pm_{\rm min}(g)&=\langle\sigma_\chi^\pm(g) f^\pm_{\rm min},f^\pm_{\rm min}\rangle\nonumber\\
 &=\vol(Z \bs H')^{-1}\int\limits_{Z\backslash G}f^\pm_{\rm min}(xg)\overline{f^\pm_{\rm min}(x)}\,dx\nonumber\\
 &=\vol(Z \bs H')^{-1}\int\limits_{Z\backslash H'}f^\pm_{\rm min}(xg)\overline{\chi^\pm(x)}\,dx\nonumber\\
 &=\vol(Z \bs H')^{-1} \int\limits_{Z\backslash H'}f^\pm_{\rm min}(g)\,dx\nonumber\\
 &=f^\pm_{\rm min}(g).
\end{align}
Hence the minimal vector is its own matrix coefficient: $\Phi_{\rm min}^\pm=f_{\rm min}^\pm$. It is clear from the above and $G$-invariance that
\[
 \langle hf_{\rm min}^\pm, hf_{\rm min}^\pm\rangle=1 \qquad \text{for all } h \in G.
\]
For other vectors it is in general much more difficult to obtain explicit formulas for the matrix coefficient.

\begin{proposition}\label{p:formaldeg}Let the Haar measure $dx$ on $Z \bs G$ be normalized so that the volume of $Z \bs ZK$ equals 1. Then the formal degree of each of the representations $\sigma_\chi^+$ and $\sigma_\chi^{-}$ with respect to $dx$ equals $\frac{(q^4-1)(q^2-1)}{2}$. In other words, for any vector $v$ in the space of $\sigma_\chi^\pm$, we have $$\frac{(q^4-1)(q^2-1)}{2}\int_{Z \bs G}|\langle v, \sigma_\chi^\pm(x) v\rangle|^2 dx = \langle v, v \rangle^2.$$
\end{proposition}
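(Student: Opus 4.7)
The plan is to verify the identity
$$
d(\sigma_\chi^\pm)\int_{Z\backslash G}|\langle v,\sigma_\chi^\pm(x)v\rangle|^2\,dx=\langle v,v\rangle^2
$$
for a single well-chosen vector $v$. Since $\sigma_\chi^\pm$ is supercuspidal (Proposition \ref{supercuspidalprop}) with trivial (hence unitary) central character and carries the $G$-invariant inner product \eqref{innerproduct}, it is unitary square-integrable modulo center; the general Schur orthogonality relations for such representations then guarantee that the constant
$d(\sigma_\chi^\pm)$, defined by the above identity, is independent of $v \neq 0$. I would exploit this by taking $v = f^\pm_{\min}$, since by \eqref{Phiminfmineq} the minimal vector is its own matrix coefficient, $\Phi^\pm_{\min} = f^\pm_{\min}$.

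Setting $g = 1$ in \eqref{Phiminfmineq} gives $\langle f^\pm_{\min},f^\pm_{\min}\rangle = f^\pm_{\min}(1)=1$, so the right-hand side equals $1$. By the explicit formula \eqref{fmindefeq}, the function $g\mapsto|f^\pm_{\min}(g)|^2$ is the characteristic function of $H'$ modulo $Z$, so the left-hand integral evaluates to $\vol(Z\bs H')$. Hence it remains to show that $\vol(Z\bs H')=2/((q^4-1)(q^2-1))$ under the normalization $\vol(Z\bs ZK)=1$.

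By \eqref{Hprimedefeq}, $H' = ZK' \sqcup g_\chi ZK'$, so $\vol(Z\bs H') = 2\vol(Z\bs ZK') = 2[ZK:ZK']^{-1}$. I would identify $ZK/ZK'\cong K/(K\cap ZK')$ and observe that, via the multiplier $\mu$ (which forces any scalar factor to lie in $\OF^\times$), one has $K\cap ZK' = (\OF^\times I_4)K'$. Reducing mod $\p$, the image of $K'$ is the unipotent radical $U(\mathbf{k})$ of the standard Borel of $\GSp_4(\mathbf{k})$, so
$$
[K:K'] = \frac{|\GSp_4(\mathbf{k})|}{|U(\mathbf{k})|} = \frac{(q-1)q^4(q^2-1)(q^4-1)}{q^4} = (q-1)(q^2-1)(q^4-1).
$$
Since $[(\OF^\times I_4)K':K']=|\OF^\times/(1+\p)|=q-1$, dividing gives $[K:K\cap ZK'] = (q^2-1)(q^4-1)$, and therefore $\vol(Z\bs H') = 2/((q^2-1)(q^4-1))$, yielding the stated formal degree.

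The main obstacle is essentially bookkeeping: correctly handling the multiplier constraint in describing $K\cap ZK'$ and correctly counting $|\GSp_4(\mathbf{k})|$ and $|U(\mathbf{k})|$. The conceptual heart of the argument — that $\Phi^\pm_{\min} = f^\pm_{\min}$ is supported on $H'$ with unit absolute value — comes for free from \eqref{Phiminfmineq} and reduces the entire formal-degree computation to a finite-index calculation on the reductive quotient.
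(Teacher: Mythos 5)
Your proposal is correct and follows essentially the same route as the paper's proof: use Schur orthogonality to reduce to the single vector $v=f^\pm_{\rm min}$, invoke \eqref{Phiminfmineq} to identify the left side as $\vol(Z\backslash H')$, and compute that volume as an index by reducing modulo $\p$. The only difference is cosmetic: you spell out the group orders $|\GSp_4(\mathbf{k})|$ and $|U(\mathbf{k})|$ explicitly, whereas the paper simply says ``working modulo $\p$''; the intermediate steps and the final answer agree.
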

\begin{proof}Since the quantity $R = \frac{\int_{Z \bs G}|\langle v, \sigma_\chi^\pm(x) v\rangle|^2 dx}{|\langle v, v \rangle|^2}$ does not depend on the vector $v$, we evaluate it for the vector $v=f_{\rm min}^\pm$. The calculation \eqref{Phiminfmineq} shows that $R = \vol(Z \bs H')$. It is clear from the definition that $\vol(Z \bs H') = 2 \ \vol (Z \bs H) = 2 \ [K: H \cap K]^{-1}$. By reducing modulo \(\p\) we see that
$[K:H\cap K]
  =
  [\GSp_4(\mathbf{k}):Z_{\GSp_4}(\mathbf{k})U(\mathbf{k})].
$
Here \(Z_{\GSp_4}(\mathbf{k})\) is the scalar center. Now
\[
  |U(\mathbf{k})|=q^4,\qquad |Z_{\GSp_4}(\mathbf{k})|=q-1,\qquad
  Z_{\GSp_4}(\mathbf{k})\cap U(\mathbf{k})=\{1\},
\]
and by a standard formula (see, for example, Chapter 8 of \cite{Tay92})
\[
  |\GSp_4(\mathbf{k})|=(q-1)|\Sp_4(\mathbf{k})|
  =(q-1)q^4(q^2-1)(q^4-1).
\]
Therefore
$[K:H\cap K]
  =
  (q^4-1)(q^2-1).
$
\end{proof}

\subsection{The Whittaker model and associated local integral}
Let $$B= \left\{\begin{bsmallmatrix}t_1&*&*&*\\&t_2&*&*\\&&\mu t_2^{-1}&*\\&&&\mu t_1^{-1}\end{bsmallmatrix}: t_1, t_2, \mu \in F^\times\right\}$$ be the  Borel subgroup of $G$ and $U$ be the unipotent radical of $B$. Thus we have
\[
 U = \left\{\begin{bsmallmatrix}1&a\\&1\\&&1&-a\\&&&1\end{bsmallmatrix}\begin{bsmallmatrix}1&&b&c\\&1&e&b\\&&1\\&&&1\end{bsmallmatrix}: a,b,c,e \in F \right\}.
\]
Our choice of Haar measure $dx$ on $F$ gives a Haar measure $du$ on $U$. Fix $c_1, c_2 \in \OF^\times$ and consider the character $\psi_{c_1, c_2}$ of $U$ defined by
\[
 \psi_{c_1, c_2} \left(\begin{bsmallmatrix}1&a&*&*\\&1&e&*\\&&1&-a\\&&&1\end{bsmallmatrix} \right) = \psi(c_1a + c_2e).
\]
An irreducible, admissible representation $\pi$ of $G$ is called generic if $\Hom_{U}(\pi, \psi_{c_1, c_2}) \ne 0$. This definition does not depend on the choice of $c_1$ or $c_2$. Define
\[
 \mathcal{W}(\psi_{c_1, c_2}) = \left\{W: G \rightarrow \C, \quad  W(ug) = \psi_{c_1, c_2}(u)W(g)\text{ for all } g\in G, u\in U  \right\}.
\]If $\pi$ is generic, then there exists a (unique) subspace $\mathcal{W}(\pi, \psi_{c_1, c_2}) \subset \mathcal{W}(\psi_{c_1, c_2})$, known as the Whittaker model for $\pi$, such that $\mathcal{W}(\pi, \psi_{c_1, c_2})$ gives a model for $\pi$ under the action of $G$ given by right translations.

For an irreducible, admissible, unitary representation $\pi$ of $G$, we define the quantity $J_0(v_1, v_2)$ for any two vectors $v_1$ and $v_2$ in $\pi$ by
\begin{equation}\label{J0-denf}
J_0(v_1, v_2) := \int^{\st}\limits_U \Phi_{v_1, v_2}(u) \psi_{c_1, c_2}^{-1}(u) du,
\end{equation}
where the symbol $\int^{\st}\limits_U$ denotes the stable integral (a form of regularization of a potentially non-convergent integral) in the sense of Lapid--Mao \cite[(2.1)]{LM15}. Note here that if $\pi$ is \emph{square-integrable} (e.g., a supercuspidal representation with unitary central character), then the integral \eqref{J0-denf} converges absolutely, and so we can replace the stable integral by the usual integral. We also define the normalized quantity
\begin{equation}\label{e:normalizedJ0}
 J_0(v) := \frac{J_0(v, v)}{\langle v, v \rangle},
\end{equation}
whose definition does not depend on the choice of $\langle\,, \rangle$.

The pairing $(v_1, v_2) \mapsto J_0(v_1, v_2)$ gives a Hermitian form on the space of $\pi$ that is $(U, \psi_{c_1, c_2})$ equivariant in $v_1$ and $(U, \psi_{c_1, c_2}^{-1})$  equivariant in $v_2$. It follows that if the pairing $(v_1, v_2) \mapsto J_0(v_1, v_2)$ is not identically 0, then $\pi$ must be generic. Conversely, if $\pi$ is generic, then using \cite[Prop 2.3]{LM15} we see that the pairing $(v_1, v_2) \mapsto J_0(v_1, v_2)$  descends to a non-degenerate pairing on a one-dimensional quotient of $\pi$. Therefore, $\pi$ is generic if and only if there is a non-zero vector $v$ in the space of $\pi$ such that $J_0(v) \neq 0$.

Furthermore, if $\pi$ is generic and $W: V_\pi \rightarrow \C$ is a non-zero $(U, \psi_{c_1, c_2})$ equivariant functional (such a functional is unique up to multiples), then there is a non-zero constant $c$ (which depends on the choice of $\langle \ , \ \rangle$ and the choice of the functional) such that $J_0(v_1, v_2) = c W(v_1) \overline{W(v_2)}$ for any two vectors $v_1$, $v_2$ in the space of $\pi$. We say that a vector $v$ in the space of $\pi$ is a \emph{test vector for the Whittaker functional} if $J_0(v) \neq 0$. We will refer to $J_0(v)$ as the \emph{local Whittaker integral} for $v$.

Let $\pi$ be an irreducible, admissible, generic representation of $G$ of trivial central character. Let $\chi$ be a character of $F^\times$, and let \(a(\chi)\) denote its conductor exponent;
equivalently, \(a(\chi)\) is the least non-negative integer \(n\) such that
\(\chi\) is trivial on \(1+\p^n\), with the convention \(1+\p^0=\OF^\times\). For any $W$ in $\mathcal{W}(\pi, \psi_{c_1, c_2})$ define the Novodvorsky zeta integral by
\begin{equation}\label{zeta-int-defn}
 Z(s, W, \chi) := \int\limits_{F^\times} \int\limits_F W(\begin{bsmallmatrix}\gamma\\&\gamma\\&x&1\\&&&1\end{bsmallmatrix})|\gamma|^{s-\frac 32} \chi(\gamma)\,dx\,d^\times \gamma.
\end{equation}
Recall the paramodular group $\K{n}$ defined in \eqref{paradefeq}.
Let the conductor of $\pi$ be $q^{a(\pi)}$. It was shown in \cite{NF} that $\pi$ has a vector fixed by $\K{n}$ if and only if $n \ge a(\pi)$; moreover, the space of $\K{a(\pi)}$-fixed vectors in $V_\pi$ is one-dimensional. A non-zero vector in the space of $\pi$ is said to be a (paramodular) \emph{newvector} if it is fixed by $\K{a(\pi)}$. If $W_{\rm new} \in \mathcal{W}(\pi, \psi_{c_1, c_2})$ is a newvector (in the Whittaker model of $\pi$) normalized by $W_{\rm new}(1)=1$, and $\chi$ is unramified, then by \cite[Thm.~7.5.4]{NF} we know that
\begin{equation}\label{e:localspinunram}
 Z(s, W_{\rm new}, \chi) = L(s, \pi \times \chi),
\end{equation}
where $L(s,\pi \times \chi)$ denotes the spinor (degree 4) $L$-factor of $\pi \times \chi$.

\subsection{The local Whittaker integral for the minimal vector} \label{s:localwhittakerintegralminimal}

Recall the definition of the local integral $J_0$ from \eqref{J0-denf}. We will show that $J_0$ is non-vanishing on certain translates of the minimal vector by diagonal matrices. \begin{proposition}\label{J0-non-vanishing}
We have $J_0( d_{\alpha,\beta} f^\pm_{\rm min},  d_{\gamma,\delta} f^\pm_{\rm min}) = 0$ unless all of the following conditions are satisfied,
$$
 \frac{\alpha}{\gamma}, \frac{\beta}{\delta} \in 1+\p,\quad \gamma \in \varpi^{-1}c_1^{-1}(1+\p) \text{ and } \delta \in \varpi^{-1}c_2^{-1}(1+\p).
$$
In case all the conditions above are satisfied, then $J_0( d_{\alpha,\beta} f^\pm_{\rm min},  d_{\gamma,\delta} f^\pm_{\rm min}) = q^{7}$. In particular \[J_0( d_{\varpi^{-1}c_1^{-1},\varpi^{-1}c_2^{-1}} f^\pm_{\rm min}) = q^7.\]
\end{proposition}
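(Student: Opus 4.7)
The plan is to convert $J_0(d_{\alpha,\beta}f^\pm_{\rm min}, d_{\gamma,\delta}f^\pm_{\rm min})$ into an explicit compactly supported integral over $U$ and then evaluate directly. By $G$-invariance of the pairing and the identity $\Phi^\pm_{\rm min}=f^\pm_{\rm min}$ from \eqref{Phiminfmineq}, the matrix coefficient attached to the pair $(d_{\alpha,\beta}f^\pm_{\rm min}, d_{\gamma,\delta}f^\pm_{\rm min})$ is simply $g\mapsto f^\pm_{\rm min}(d_{\gamma,\delta}^{-1}g\,d_{\alpha,\beta})$. Since $f^\pm_{\rm min}$ is compactly supported modulo $Z$ and $U\cap Z=\{1\}$, the integral converges absolutely (no regularization is needed), yielding
\[
 J_0(d_{\alpha,\beta}f^\pm_{\rm min}, d_{\gamma,\delta}f^\pm_{\rm min})=\int_U f^\pm_{\rm min}(d_{\gamma,\delta}^{-1}u\,d_{\alpha,\beta})\,\psi_{c_1,c_2}^{-1}(u)\,du.
\]

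I would next delineate the support. By \eqref{fmindefeq} the integrand vanishes unless $d_{\gamma,\delta}^{-1}u\,d_{\alpha,\beta}\in H\sqcup g_\chi H$. The conjugate is upper triangular and hence has $(3,1)$-entry zero; but every element of $g_\chi ZK'$ has $(3,1)$-entry of valuation exactly $1$ (the third row of $g_\chi$ is $(-\varpi t_2/t_3,0,0,0)$ and $K'$ has $(1,1)$-entry in $1+\p$), so the $g_\chi H$-coset contributes nothing. For the $H$-coset, the diagonal entries of $d_{\gamma,\delta}^{-1}u\,d_{\alpha,\beta}$ are $(\alpha/\gamma)^2(\beta/\delta)$, $(\alpha/\gamma)(\beta/\delta)$, $\alpha/\gamma$, $1$; they must lie in a common coset $z(1+\p)$, and the $(4,4)$-entry forces $z\in 1+\p$, whence $\alpha/\gamma,\beta/\delta\in 1+\p$. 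This produces the first pair of support conditions in the proposition.

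Granted these, $d_{\gamma,\delta}^{-1}d_{\alpha,\beta}$ lies in the diagonal part of $K'$, on which $\chi$ is trivial, so $\chi(d_{\gamma,\delta}^{-1}u\,d_{\alpha,\beta})=\chi(d_{\alpha,\beta}^{-1}u\,d_{\alpha,\beta})$. Parametrizing $u\in U$ by $(a,b,c,e)$ as in Sect.~\ref{s:localprelims}, the upper-triangular entries of $d_{\alpha,\beta}^{-1}u\,d_{\alpha,\beta}$ are $a/\alpha$, $e/\beta$, $b/(\alpha\beta)$, $(b+ae)/(\alpha\beta)$, $(c+ab)/(\alpha^2\beta)$, so membership in $K'$ cuts out the box $a\in\alpha\OF$, $e\in\beta\OF$, $b\in\alpha\beta\OF$, $c\in\alpha^2\beta\OF$. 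On this box, using $\chi=\chi_{1,1,t}$ in \eqref{chidefeq} (the $r_3$-term dropping since the $(4,1)$-entry vanishes),
\[
 \chi(d_{\alpha,\beta}^{-1}u\,d_{\alpha,\beta})=\psi_0\!\left(\tfrac{a}{\alpha}+\tfrac{e}{\beta}\right)=\psi\!\left(\tfrac{a}{\varpi\alpha}\right)\psi\!\left(\tfrac{e}{\varpi\beta}\right).
\]

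Finally, the resulting integral factors over $a,b,c,e$. The $b$- and $c$-integrals are pure volumes equal to $|\alpha\beta|$ and $|\alpha^2\beta|$. The $a$-integral $\int_{\alpha\OF}\psi(a(\varpi^{-1}\alpha^{-1}-c_1))\,da$ vanishes unless $\varpi^{-1}\alpha^{-1}-c_1\in\OF$, i.e.\ $\alpha\in\varpi^{-1}c_1^{-1}(1+\p)$, in which case it equals $|\alpha|=q$; combined with $\alpha/\gamma\in 1+\p$ this yields the stated support condition on $\gamma$. The analogous argument on the $e$-integral handles $\beta$, $c_2$ and contributes another $q$. When all four conditions hold, $|\alpha|=|\beta|=q$, so the product is $q\cdot q\cdot q^2\cdot q^3=q^7$; otherwise $J_0=0$. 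Specializing $\alpha=\gamma=\varpi^{-1}c_1^{-1}$, $\beta=\delta=\varpi^{-1}c_2^{-1}$ gives the final identity. The only genuine subtlety is the support analysis — excluding the $g_\chi H$-coset via the $(3,1)$-entry and tracking the $1+\p$ ambiguity in the $ZK'$ decomposition when verifying that $\chi$ is trivial on the diagonal piece — after which everything reduces to routine Gauss-sum bookkeeping.
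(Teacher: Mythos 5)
Your proof is correct and follows essentially the same route as the paper: reduce $J_0$ to an integral of $f^\pm_{\rm min}$ over $U$ via \eqref{Phiminfmineq}, rule out the $g_\chi H$-coset, extract the support box from the entries of the conjugated unipotent, and evaluate the resulting volume and additive-character integrals. The only cosmetic differences are that the paper excludes $g_\chi ZK'$ by observing the top-left $2\times 2$ block of $g_\chi^{-1}d_{\gamma,\delta}^{-1}u d_{\alpha,\beta}$ vanishes (you use the $(3,1)$-entry) and that the paper keeps $\gamma,\delta$ in the box conditions while you factor off the diagonal $d_{\gamma,\delta}^{-1}d_{\alpha,\beta}$ and work with $\alpha,\beta$ — equivalent once $\alpha/\gamma,\beta/\delta\in 1+\p$.
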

\begin{proof}
By \eqref{Phiminfmineq},
\begin{align*}
J_0( d_{\alpha,\beta} f^\pm_{\rm min},  d_{\gamma,\delta} f^\pm_{\rm min}) &= \int\limits_U \Phi_{ d_{\alpha,\beta} f^\pm_{\rm min},  d_{\gamma,\delta} f^\pm_{\rm min}}(u) \psi_{c_1, c_2}^{-1}(u)\,du \\
&= \int\limits_U \Phi^{\pm}_{{\rm min}}( d_{\gamma,\delta}^{-1} u d_{\alpha,\beta}) \psi_{c_1, c_2}^{-1}(u)\,du \\
&= \int\limits_U f^{\pm}_{{\rm min}}( d_{\gamma,\delta}^{-1} u  d_{\alpha,\beta}) \psi_{c_1, c_2}^{-1}(u)\,du.
\end{align*}
For $a,b,c,e \in F$, set
$$u(a,b,c,e) = \begin{bsmallmatrix}1&a\\&1\\&&1&-a\\&&&1\end{bsmallmatrix}\begin{bsmallmatrix}1&&b&c\\&1&e&b\\&&1\\&&&1\end{bsmallmatrix}.$$
Then
$$ d_{\gamma,\delta}^{-1} u(a,b,c,e)  d_{\alpha,\beta} = \begin{bsmallmatrix}\frac{\alpha^2\beta}{\gamma^2\delta} & \frac{\alpha \beta a}{\gamma^2\delta} & \frac{\alpha (b+ae)}{\gamma^2\delta} & \frac{(c+ab)}{\gamma^2\delta} \\ & \frac{\alpha \beta}{\gamma \delta} & \frac{\alpha e}{\gamma \delta} & \frac{b}{\gamma \delta} \\ && \frac{\alpha}{\gamma} & \frac{-a}{\gamma} \\ &&&1\end{bsmallmatrix}.$$
We need to determine when the above matrix lies in the support $H' = ZK' \sqcup g_\chi ZK'$ of $f^{\pm}_{{\rm min}}$.
Note that the top left $2 \times 2$ block of $g_\chi^{-1} d_{\gamma,\delta}^{-1} u(a,b,c,e)  d_{\alpha,\beta}$ is zero, which implies that $ d_{\gamma,\delta}^{-1} u(a,b,c,e)  d_{\alpha,\beta}$ never belongs to $g_\chi ZK'$. Hence, we see that $ d_{\gamma,\delta}^{-1} u(a,b,c,e)  d_{\alpha,\beta}$ belongs to the support of $f^{\pm}_{{\rm min}}$ if and only if it belongs to $K'$, and this happens if and only if
$$
 \frac{\alpha}{\gamma}, \frac{\beta}{\delta} \in 1+\p\quad\text{and}\quad\frac a{\gamma}, \frac e{\delta}, \frac b{\gamma \delta}, \frac c{\gamma^2\delta} \in \OF.
$$
Assuming these conditions, and using \eqref{J0-denf} and the definition \eqref{fmindefeq} of $f^{\pm}_{{\rm min}}$, we get
\begin{align*}
 &J_0( d_{\alpha,\beta} f^\pm_{\rm min},  d_{\gamma,\delta} f^\pm_{\rm min}) = \int\limits_{\substack{a \in \gamma \OF, \,b
\in \gamma \delta \OF \\ c \in \gamma^2 \delta \OF, \,e \in \delta \OF}} \psi\Big(\varpi^{-1}\Big(\frac{\alpha \beta a}{\gamma^2\delta}+\frac{\alpha e}{\gamma \delta}\Big)\Big) \psi(-(c_1a+c_2e))\,da\,de\,db\,dc \\
&\qquad= {\rm vol}(\gamma \delta \OF) {\rm vol}(\gamma^2 \delta \OF)\!\!\int\limits_{a \in \gamma \OF, \,e \in \delta \OF}\!\!\psi\Big(\frac a{\varpi}\Big(\frac{\alpha \beta}{\gamma^2\delta} - c_1\varpi\Big)\Big) \psi\Big(\frac e{\varpi}\Big(\frac{\alpha}{\gamma \delta} - c_2\varpi\Big)\Big)\,da\,de \\
&\qquad= |\gamma^4 \delta^3| \int\limits_{a \in  \OF, \,e \in  \OF} \psi(a(\varpi^{-1}- c_1\gamma)) \psi(e(\varpi^{-1}- c_2\delta))\,da\,de.
\end{align*}
The integral is non-zero if and only if
$$
 \gamma \in c_1^{-1}\varpi^{-1}(1+\p),\;\delta \in c_2^{-1}\varpi^{-1}(1+\p).
$$
If these conditions are satisfied, then
$$
 J_0( d_{\alpha,\beta} f^\pm_{\rm min},  d_{\gamma,\delta} f^\pm_{\rm min})   = |\varpi^{-7}| = q^{7},
$$
as asserted. For the final assertion, observe \eqref{J0-denf} and $\langle f_{\rm min}^\pm,f_{\rm min}^\pm\rangle=1$ by our choice of Haar measure on $Z\backslash G$.
\end{proof}

The above proposition allows us to obtain an explicit description of the Whittaker model of $\sigma^\pm_\chi$. For any $\phi \in \sigma^\pm_\chi$, define the function $W_\phi$ on $G$ by
\begin{align}\label{Wphidefeq}
W_\phi(g) &:= J_0(g \cdot \phi,  d_{\varpi^{-1}c_1^{-1}, \varpi^{-1}c_2^{-1}} f^\pm_{\rm min}) \nonumber\\
&= \int\limits_U \langle u g \phi,  d_{\varpi^{-1}c_1^{-1},\varpi^{-1} c_2^{-1}} f^\pm_{\rm min} \rangle \psi^{-1}_{c_1, c_2}(u)\,du \nonumber\\
&= \int\limits_U \langle  d_{\varpi c_1, \varpi c_2} u g \phi,  f^\pm_{\rm min} \rangle \psi^{-1}_{c_1, c_2}(u)\,du.
\end{align}
\begin{corollary}\label{Whittaker-model-defn-cor}
 The map $\phi \mapsto W_\phi$ is a non-zero intertwining map from $\sigma^\pm_\chi$ to $\mathcal{W}(\sigma^\pm_\chi, \psi_{c_1, c_2})$. Moreover,
 \[
  W_{d_{\alpha, \beta}f^\pm_{\rm min}}(1) = W_{f^\pm_{\rm min}}(d_{\alpha, \beta}) = \begin{cases} q^{7} & \text{ if } \alpha \in \varpi^{-1}c_1^{-1}(1+\p), \beta\in \varpi^{-1}c_2^{-1}(1+\p) \\ 0 &\text{ otherwise.} \end{cases}
 \]
\end{corollary}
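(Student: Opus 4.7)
The plan is to deduce both assertions directly from Proposition~\ref{J0-non-vanishing} together with the standard $(U,\psi_{c_1,c_2})$-equivariance of $J_0$ in its first argument. I would organize the argument in three short steps.

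First, I would establish that $\phi \mapsto W_\phi$ takes values in $\mathcal{W}(\psi_{c_1,c_2})$ and intertwines $\sigma_\chi^\pm$ with right translation. For this, the key observation is that for any $u_0 \in U$ and any two vectors $v_1, v_2$, one has $\Phi_{u_0 v_1, v_2}(u) = \Phi_{v_1, v_2}(u u_0)$, so after the change of variables $u \mapsto u u_0^{-1}$ in the defining integral \eqref{J0-denf} (which is legitimate here because the representation is supercuspidal and hence the integral converges absolutely), we obtain $J_0(u_0 v_1, v_2) = \psi_{c_1, c_2}(u_0) J_0(v_1, v_2)$. Applying this with $v_1 = g\phi$ and $v_2 = d_{\varpi^{-1}c_1^{-1}, \varpi^{-1}c_2^{-1}} f^\pm_{\rm min}$ yields $W_\phi(u_0 g) = \psi_{c_1, c_2}(u_0) W_\phi(g)$. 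The $G$-equivariance $W_{h\phi}(g) = W_\phi(gh)$ is immediate from the definition~\eqref{Wphidefeq}.

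Second, I would prove the explicit formula. By definition, $W_{f^\pm_{\rm min}}(d_{\alpha,\beta}) = J_0(d_{\alpha,\beta} f^\pm_{\rm min},\, d_{\varpi^{-1}c_1^{-1}, \varpi^{-1}c_2^{-1}} f^\pm_{\rm min})$, which equals $W_{d_{\alpha,\beta}f^\pm_{\rm min}}(1)$ by setting $g = 1$ and using the intertwining property (or directly reading off the definition). Now applying Proposition~\ref{J0-non-vanishing} with $\gamma = \varpi^{-1}c_1^{-1}$ and $\delta = \varpi^{-1}c_2^{-1}$, the conditions $\gamma \in \varpi^{-1}c_1^{-1}(1+\p)$ and $\delta \in \varpi^{-1}c_2^{-1}(1+\p)$ are automatically satisfied, so the value is $q^7$ precisely when $\alpha/\gamma \in 1+\p$ and $\beta/\delta \in 1+\p$, i.e.\ when $\alpha \in \varpi^{-1}c_1^{-1}(1+\p)$ and $\beta \in \varpi^{-1}c_2^{-1}(1+\p)$, and vanishes otherwise.

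Finally, the explicit value $q^7 \neq 0$ obtained in Step~2 shows that $\phi \mapsto W_\phi$ is non-zero; combined with the intertwining property of Step~1 and the irreducibility of $\sigma^\pm_\chi$, this implies that its image is the Whittaker model $\mathcal{W}(\sigma^\pm_\chi, \psi_{c_1, c_2})$. No real obstacle arises here: the corollary is a direct packaging of Proposition~\ref{J0-non-vanishing}, and the only subtlety is verifying the $U$-equivariance of $J_0$, which is a routine change of variables made safe by the absolute convergence of the matrix-coefficient integral for supercuspidals.
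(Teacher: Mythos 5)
Your proposal is correct and follows essentially the same route as the paper: both deduce the $U$-equivariance of $W_\phi$ from the equivariance of $J_0$ in its first argument (what the paper calls ``a simple change of variables''), and both read off the explicit formula by specializing Proposition~\ref{J0-non-vanishing} to $\gamma = \varpi^{-1}c_1^{-1}$, $\delta = \varpi^{-1}c_2^{-1}$, which makes the conditions on $\gamma,\delta$ vacuous and reduces the remaining conditions to those stated. The extra details you supply (the absolute convergence of the $J_0$ integral for supercuspidals that justifies the substitution, and the appeal to irreducibility to identify the image with the Whittaker model) are implicit in the paper's shorter proof.
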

\begin{proof}
A simple change of variables shows that
$$
 W_\phi(ug) = \psi_{c_1, c_2}(u) W_\phi(g).
$$
Hence $\phi \mapsto W_\phi$ is an intertwining map from $\sigma^\pm_\chi$ to $\mathcal{W}(\sigma^\pm_\chi, \psi_{c_1, c_2})$. To see that this map is non-zero, note from Proposition \ref{J0-non-vanishing} that
$$W_{f^\pm_{\rm min}}( d_{\varpi^{-1}c_1^{-1}, \varpi^{-1}c_2^{-1}}) \neq 0.$$ The formula for $W_{f^\pm_{\rm min}}(d_{\alpha, \beta})$ also follows from Proposition \ref{J0-non-vanishing}.
\end{proof}
\subsection{The Novodvorsky zeta integral for the minimal vector}\label{s:localnov}
For any $W$ in $\mathcal{W}(\sigma^\pm_\chi, \psi_{c_1, c_2})$, recall the defining formula \eqref{zeta-int-defn} for the Novodvorsky zeta integral.

\begin{proposition}\label{zeta-int-value-prop}
 Let $\alpha,\beta\in F^\times$. Let $\chi$ be a character of $F^\times$ with  $a(\chi)\in\{ 0, 1\}$. With the definition \eqref{Wphidefeq} of the Whittaker function, we have
 $$
  Z(s,  W_{d_{\alpha, \beta}f^\pm_{\rm min}}, \chi) = \begin{cases} (1-q^{-1})^{-1}q^{s+7/2} |\beta|^{1/2 -s} \chi(\beta\varpi c_2)^{-1}& \text{ if } \alpha \in \varpi^{-1}c_1^{-1}(1 + \p), \\ 0 & \text{ otherwise.}\end{cases}
 $$
\end{proposition}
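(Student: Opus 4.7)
The plan is to use the $G$-intertwiner $\phi\mapsto W_\phi$, which gives $W_{d_{\alpha,\beta}f^\pm_{\rm min}}(g)=W_{f^\pm_{\rm min}}(g\,d_{\alpha,\beta})$, together with the elementary identity
\[
\begin{bsmallmatrix}\gamma&&&\\&\gamma&&\\&x&1&\\&&&1\end{bsmallmatrix}d_{\alpha,\beta}=d_{\alpha,\gamma\beta}\,\bar u(x\beta),\qquad\bar u(\mu):=\begin{bsmallmatrix}1&&&\\&1&&\\&\mu&1&\\&&&1\end{bsmallmatrix},
\]
to rewrite the Novodvorsky integral as an integral of $W_{f^\pm_{\rm min}}(d_{\alpha,\gamma\beta}\bar u(x\beta))$ over $(\gamma,x)$. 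The whole argument then reduces to understanding the $x$-dependence of this quantity.

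My central claim is that $W_{f^\pm_{\rm min}}(d_{\alpha,\gamma\beta}\bar u(\mu))$ vanishes unless $\mu\in\p$, in which case it equals $W_{f^\pm_{\rm min}}(d_{\alpha,\gamma\beta})$. When $\mu\in\p$, the element $\bar u(\mu)$ lies in $K'\cap\ker(\chi)$ by direct inspection of~\eqref{chidefeq}, and the right-equivariance $f^\pm_{\rm min}(gk)=\chi^\pm(k)f^\pm_{\rm min}(g)$ for $k\in H'$ (which is immediate from the definition of $f^\pm_{\rm min}$) yields the equality. When $\mu\notin\p$, I would apply the Bruhat-style factorization $\bar u(\mu)=u_2(1/\mu)\,t(\mu)\,s_2\,u_2(1/\mu)$ with $t(\mu)={\rm diag}(1,-1/\mu,-\mu,1)$ and $u_2(e):=u(0,0,0,e)$; after commuting $u_2(1/\mu)$ across $d_{\alpha,\gamma\beta}$ and noting $d_{\alpha,\gamma\beta}t(\mu)=d_{-\alpha\mu,\gamma\beta/\mu^2}$, this becomes
\[
d_{\alpha,\gamma\beta}\bar u(\mu)=u_2(\gamma\beta/\mu)\,d_{-\alpha\mu,\gamma\beta/\mu^2}\,s_2\,u_2(1/\mu).
\]
Since $v(1/\mu)\ge 0$ forces $u_2(1/\mu)\in H'$, the Whittaker transformation on the left and the $H'$-equivariance on the right then reduce the question to proving $W_{f^\pm_{\rm min}}(d\cdot s_2)=0$ for every diagonal $d$; this is the main obstacle.

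To establish this vanishing, I would show directly that the integrand in
\[
W_{f^\pm_{\rm min}}(d\,s_2)=\int_U f^\pm_{\rm min}(d_{\varpi c_1,\varpi c_2}\,u\,d\,s_2)\psi_{c_1,c_2}^{-1}(u)\,du
\]
vanishes for every $u\in U$. Indeed, $d_{\varpi c_1,\varpi c_2}\,u\,d\,s_2$ has $(3,3)$-entry identically zero, since $d\,s_2$ already has $(3,3)=0$ and upper-unipotent $u$ cannot change this entry; hence it has a zero on the diagonal, ruling out membership in $ZK'$. The same matrix has row~$4$ equal to $(0,0,0,1)$, and $g_\chi^{-1}$ moves row~$4$ into row~$2$ (scaled by $t/\varpi$), forcing the $(2,2)$-entry of $g_\chi^{-1}(d_{\varpi c_1,\varpi c_2}\,u\,d\,s_2)$ to vanish, ruling out membership in $g_\chi ZK'$. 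Combining everything, the zeta integral reduces---assuming $\alpha\in\varpi^{-1}c_1^{-1}(1+\p)$, else $Z=0$ via Corollary~\ref{Whittaker-model-defn-cor}---to
\[
Z=q^{7}\!\!\int\limits_{x\in\varpi\beta^{-1}\OF}\!\!dx\int\limits_{\gamma\in\varpi^{-1}c_2^{-1}\beta^{-1}(1+\p)}\!\!|\gamma|^{s-3/2}\chi(\gamma)\,d^\times\gamma,
\]
and the claimed formula follows from elementary measure computations using $a(\chi)\le 1$ (so $\chi$ is trivial on $1+\p$) and the normalization $d^\times\gamma=(1-q^{-1})^{-1}d\gamma/|\gamma|$.
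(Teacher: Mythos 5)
Your argument is correct and reaches the stated formula, but it does so by a genuinely different route than the paper. The paper directly unfolds the definition \eqref{Wphidefeq} of $W_{f^\pm_{\rm min}}$, pulls out $|\beta|^{1/2-s}\chi(\beta)^{-1}$ by substitution, and then determines in one pass the support conditions on all six integration variables $(a,b,c,e,x,\gamma)$ forcing $d_{\varpi c_1,\varpi c_2}\,u\,\begin{bsmallmatrix}\alpha^2\gamma\\&\alpha\gamma\\&\alpha x&\alpha\\&&&1\end{bsmallmatrix}$ into $H'=ZK'\sqcup g_\chi ZK'$, finishing with a direct evaluation. You instead factor $\begin{bsmallmatrix}\gamma\\&\gamma\\&x&1\\&&&1\end{bsmallmatrix}d_{\alpha,\beta}=d_{\alpha,\gamma\beta}\bar u(x\beta)$ and separate the $x$-dependence at the level of $W_{f^\pm_{\rm min}}$: for $\mu\in\p$ you absorb $\bar u(\mu)\in K'\cap\ker\chi$ by right-equivariance, and for $\mu\notin\p$ you deploy the $\GL_2$-Bruhat-in-$\SL_2$ identity $\bar u(\mu)=u_2(1/\mu)t(\mu)s_2u_2(1/\mu)$ (with $u_2(1/\mu)\in K'$ since $1/\mu\in\OF$) plus the left $(U,\psi_{c_1,c_2})$- and right $H'$-equivariances of $W_{f^\pm_{\rm min}}$ to reduce the vanishing claim to $W_{f^\pm_{\rm min}}(d\,s_2)=0$ for diagonal $d$. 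You then establish this vanishing by inspecting two matrix entries, and finally cite Corollary~\ref{Whittaker-model-defn-cor} to obtain the values $W_{f^\pm_{\rm min}}(d_{\alpha,\gamma\beta})$ rather than recomputing them. Your approach buys modularity: it isolates the reusable structural fact $W_{f^\pm_{\rm min}}(d\,s_2)=0$ and offloads the unipotent integration to the already-proved corollary, whereas the paper's calculation is self-contained but requires tracking all six variables at once. Two small imprecisions that do not affect correctness: in the vanishing argument the row $4$ of $d_{\varpi c_1,\varpi c_2}\,u\,d\,s_2$ is $(0,0,0,d_4)$ rather than $(0,0,0,1)$, though only $(4,2)$-vanishing is used; and the claim that upper-unipotent $u$ ``cannot change'' the $(3,3)$-entry needs the additional observation that the $(4,3)$-entry of $d\,s_2$ also vanishes (which it does, since column~$3$ of $d\,s_2$ is $(0,d_2,0,0)^T$).
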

\begin{proof}
Using \eqref{Wphidefeq}, we calculate that the integral $Z(s, W_{d_{\alpha, \beta}f^\pm_{\rm min}}, \chi)$ is equal to
\begin{align*}
& \int\limits_{F^\times} \int\limits_F W_{f^\pm_{\rm min}}(\begin{bsmallmatrix}\gamma\\&\gamma\\&x&1\\&&&1\end{bsmallmatrix}d_{\alpha, \beta})|\gamma|^{s-\frac 32} \chi(\gamma)\,dx\,d^\times \gamma \\
&= \int\limits_{F^\times} \int\limits_F \int\limits_U \langle  d_{\varpi c_1, \varpi  c_2} u \begin{bsmallmatrix}\alpha^2 \beta\gamma\\&\alpha\beta\gamma\\&\alpha\beta x&\alpha\\&&&1\end{bsmallmatrix} f^\pm_{\rm min},  f^\pm_{\rm min} \rangle \psi^{-1}_{c_1, c_2}(u)  |\gamma|^{s-\frac 32}\chi(\gamma)\,du\,dx\,d^\times \gamma \\
&= \int\limits_{F^\times} \int\limits_F \int\limits_U \Phi^{\pm}_{{\rm min}} \left( d_{\varpi c_1, \varpi  c_2} u \begin{bsmallmatrix}\alpha^2 \beta\gamma\\&\alpha\beta\gamma\\&\alpha\beta x&\alpha\\&&&1\end{bsmallmatrix}\right) \psi^{-1}_{c_1, c_2}(u)  |\gamma|^{s-\frac 32}\chi(\gamma)\,du\,dx\,d^\times \gamma \\
&= \int\limits_{F^\times} \int\limits_F \int\limits_U f^{\pm}_{{\rm min}} \left( d_{\varpi c_1, \varpi c_2} u \begin{bsmallmatrix}\alpha^2 \beta\gamma\\&\alpha\beta\gamma\\&\alpha\beta x&\alpha\\&&&1\end{bsmallmatrix}\right)\psi^{-1}_{c_1, c_2}(u)  |\gamma|^{s-\frac 32}\chi(\gamma)\,du\,dx\,d^\times \gamma \\
&= |\beta|^{1/2 - s}\chi(\beta)^{-1}\int\limits_{F^\times} \int\limits_F \int\limits_U f^{\pm}_{{\rm min}} \left( d_{\varpi c_1, \varpi c_2} u \begin{bsmallmatrix}\alpha^2 \gamma\\&\alpha \gamma\\&\alpha x&\alpha\\&&&1\end{bsmallmatrix}\right) \\
& \qquad \qquad \times \psi^{-1}_{c_1, c_2}(u)  |\gamma|^{s-\frac 32}\chi(\gamma)\,du\,dx\,d^\times \gamma,
\end{align*}
where in the last step we have made the substitutions $x \mapsto \beta^{-1}x$, $\gamma \mapsto \beta^{-1}\gamma$. We need to find when $ d_{\varpi c_1, \varpi c_2} u \begin{bsmallmatrix}\alpha^2 \gamma\\&\alpha\gamma\\&\alpha x&\alpha\\&&&1\end{bsmallmatrix} \in H' = ZK' \sqcup g_\chi ZK'$, the support of $f^{\pm}_{{\rm min}}$. Since the $(1,1)$ entry of $g_\chi^{-1} d_{\varpi c_1, \varpi c_2} u \begin{bsmallmatrix}\alpha^2 \gamma\\&\alpha\gamma\\&\alpha x&\alpha\\&&&1\end{bsmallmatrix}$ is equal to $0$, the matrices $d_{\varpi c_1, \varpi c_2} u \begin{bsmallmatrix}\alpha^2 \gamma\\&\alpha\gamma\\&\alpha x&\alpha\\&&&1\end{bsmallmatrix}$ are never in $g_\chi ZK'$. Writing out the matrix explicitly we see that the necessary and sufficient conditions for $ d_{\varpi c_1, \varpi c_2} u(a,b,c,e) \begin{bsmallmatrix}\alpha^2 \gamma\\&\alpha\gamma\\&\alpha x&\alpha\\&&&1\end{bsmallmatrix}$ to lie in  $ZK'$ are
$$
 \alpha \in \varpi^{-1}c_1^{-1}(1 + \p), \; x \in \p, \; \gamma \in \varpi^{-1}c_2^{-1}(1 + \p),\; a \in \p^{-1},\; b\in \p^{-2},\; c\in \p^{-3},\; e \in \p^{-1}.
$$
Hence, under the assumption that $\alpha \in \varpi^{-1}c_1^{-1}(1 + \p)$, we get that  $Z(s, W_{d_{\alpha, \beta}f^\pm_{\rm min}}, \chi)$ is equal to
\begin{align*}
 &|\beta|^{1/2 - s} \chi(\beta)^{-1}\int\limits_{\substack{\gamma \in \varpi^{-1}c_2^{-1}(1+\p) \\ x \in \p}} \:\int\limits_{\substack{a \in \p^{-1}, \ b\in \p^{-2} \\  c \in \p^{-3}, \ e \in \p^{-1} }} \psi(\varpi^2 c_1^2c_2 \alpha a \gamma + \varpi c_1c_2 \alpha e) \\
& \qquad \qquad  \psi(-(ac_1+ec_2)) |\gamma|^{s-\frac 32} \chi(\gamma)\, dx \,d^\times \,\gamma \,da \,db \,dc \,de \\
&= |\beta|^{1/2 - s} \chi(\beta)^{-1}q^{s-3/2} \chi(\varpi^{-1} c_2^{-1})\int\limits_{\substack{\gamma \in \varpi^{-1}c_2^{-1}(1+\p) \\ x \in \p}}\: \int\limits_{\substack{a \in \p^{-1}, \ b\in \p^{-2} \\  c \in \p^{-3}, \ e \in \p^{-1} }} \psi( c_1 a  +  c_2  e) \\
& \qquad \qquad  \psi(-(ac_1+ec_2))   dx \,d^\times \,\gamma \,da \,db \,dc \,de \\
&= |\beta|^{1/2 - s}\chi(\beta)^{-1} q^{s-3/2}\chi(\varpi^{-1} c_2^{-1})\int\limits_{\substack{\gamma \in \varpi^{-1}c_2^{-1}(1+\p) \\ x \in \p}} \:\int\limits_{\substack{a \in \p^{-1}, \ b\in \p^{-2} \\  c \in \p^{-3}, \ e \in \p^{-1} }}   \,dx \,d^\times \,\gamma \,da \,db \,dc \,de \\
&= (1-q^{-1})^{-1}q^{s+7/2} |\beta|^{1/2 -s}\chi(\beta\varpi c_2)^{-1}.
\end{align*}

This concludes the proof.
\end{proof}

\subsection{Matrix coefficient of the newvector evaluated on the unipotent radical}\label{MC-unipotent}
Recall from \eqref{fpmeq2} that the newvector $f^\pm_{\rm new}$ is supported on $H'd_{\varpi,\varpi}\K{5}$. We define the shifted newvector $\tilde f^\pm_{\rm new} := d_{\varpi, \varpi}f^\pm_{\rm new}$, i.e., for all $g\in G$ we have
\begin{equation}\label{shiftednewform}
 \tilde f^\pm_{\rm new}(g)=f^\pm_{\rm new}(gd_{\varpi, \varpi}).
\end{equation}
It is right-invariant under
\begin{equation}\label{tildeK5defeq}
 \dK{5}=d_{\varpi, \varpi}\,\K{5}d_{\varpi, \varpi}^{-1}=\{g\in G:\mu(g)=1\}\cap\begin{bsmallmatrix}\OF&\p&\p^2&\p^{-2}\\\p^4&\OF&\p&\p^2\\\p^3&\p^{-1}&\OF&\p\\\p^2&\p^3&\p^4&\OF\end{bsmallmatrix}
\end{equation}
and supported on $H'\dK{5}$. We will attempt to get some information about the matrix coefficient $\tilde\Phi_{\rm new}^\pm$ attached to the pair $(\tilde f^\pm_{\rm new},\tilde f^\pm_{\rm new})$. Let $S$ be the set of representatives from Lemma~\ref{Hcosetlemma3}, and let $\tilde S=Sd_{\varpi, \varpi}^{-1}$. The multipliers of the elements of $\tilde S$ are units.
We have $H'\dK{5}=\bigsqcup_{s\in\tilde S}H's$. Note that if $s,s'\in \tilde S$ are distinct, so that $H's$ and $H's'$ are disjoint, then the images of these sets in $Z\backslash G$ are also disjoint. Hence
\begin{align*}
 \tilde\Phi^\pm_{\rm new}(g)&=\vol(Z \bs H')^{-1}\int\limits_{Z\backslash G}\tilde f^\pm_{\rm new}(xg)\overline{\tilde f^\pm_{\rm new}(x)}\,dx\\
 &=\vol(Z \bs H')^{-1}\sum_{s\in \tilde S}\:\int\limits_{Z\backslash H's}\tilde f^\pm_{\rm new}(xg)\overline{\tilde f^\pm_{\rm new}(x)}\,dx\\
 &=\vol(Z \bs H')^{-1}\sum_{s\in \tilde S}\:\int\limits_{Z\backslash H'}\tilde f^\pm_{\rm new}(xsg)\overline{\tilde f^\pm_{\rm new}(xs)}\,dx\\
 &=\vol(Z \bs H')^{-1}\sum_{s\in \tilde S}\:\int\limits_{Z\backslash H'}\tilde f^\pm_{\rm new}(sg)\overline{\tilde f^\pm_{\rm new}(s)}\,dx\\
 &=\sum_{s\in \tilde S}\tilde f^\pm_{\rm new}(sg)\overline{\tilde f^\pm_{\rm new}(s)}\\
 &\stackrel{\eqref{fpmeq2}}{=}\sum_{s\in \tilde S}\tilde f^\pm_{\rm new}(sg)\\
 &=\sum_{s'\in\tilde S}\sum_{\substack{s\in \tilde S\\sg\in H's'}}\tilde f^\pm_{\rm new}(sg)\\
 &=\sum_{\substack{s,s'\in \tilde S\\sgs'{}^{-1}\in H'}}\chi^\pm(sgs'{}^{-1}).
\end{align*}
We see that, for $\tilde\Phi^\pm_{\rm new}(g)$ to be non-zero, there need to be $s,s'\in\tilde S$ such that $sgs'{}^{-1}\in H'$. For a given $s$, there can be at most one $s'$ satisfying this condition. Looking at multipliers, we see that
\begin{equation}\label{tildePhiformula2}
 \tilde\Phi^\pm_{\rm new}(g)=\begin{cases}
                              \displaystyle\sum_{\substack{s,s'\in \tilde S\\sgs'{}^{-1}\in H}}\chi(sgs'{}^{-1})&\text{if }v(\mu(g))\text{ is even},\\[6ex]
                              \displaystyle\pm\sum_{\substack{s,s'\in \tilde S\\sgs'{}^{-1}\in g_\chi H}}\chi(sgs'{}^{-1})&\text{if }v(\mu(g))\text{ is odd}.
                             \end{cases}
\end{equation}
These considerations show that
\begin{equation}\label{e:normnewvector}\langle \tilde f^\pm_{\rm new}, \tilde f^\pm_{\rm new} \rangle = \langle  f^\pm_{\rm new},  f^\pm_{\rm new} \rangle  = \tilde\Phi^\pm_{\rm new}(1) = |S| = (q-1)^2q^2(q+1)^2.
\end{equation}

In the following we use the notation $d = d_{\varpi, \varpi}$ for brevity. We require the values $\tilde\Phi^\pm_{\rm new}(g)$ for a unipotent matrix
\begin{equation}\label{tildePhicalceq1}
 g=\begin{bsmallmatrix}1&a\\&1\\&&1&-a\\&&&1\end{bsmallmatrix}\begin{bsmallmatrix}1&&b&c\\&1&e&b\\&&1\\&&&1\end{bsmallmatrix}
\end{equation}
with $a,b,c,e\in F$. Looking at Lemma~\ref{Hcosetlemma3}, we define
\begin{align}
 \label{tildePhicalceq4}s_1(u,v,x,y)&=d\begin{bsmallmatrix}uv\\&u\\&&v\\&&&1\end{bsmallmatrix}\begin{bsmallmatrix}1\\&1\\&x&1\\&&&1\end{bsmallmatrix}\begin{bsmallmatrix}1&&&y\varpi^{-5}\\&1\\&&1\\&&&1\end{bsmallmatrix}\begin{bsmallmatrix}1\\&&1\\&-1\\&&&1\end{bsmallmatrix}d^{-1}\\
 \label{tildePhicalceq5}s_2(u,v,x,y)&=d\begin{bsmallmatrix}uv\\&u\\&&v\\&&&1\end{bsmallmatrix}\begin{bsmallmatrix}1\\&1\\&x\varpi&1\\&&&1\end{bsmallmatrix}\begin{bsmallmatrix}1&&&y\varpi^{-5}\\&1\\&&1\\&&&1\end{bsmallmatrix}d^{-1}\\
 \label{tildePhicalceq6}s_3(u,v,x,y)&=d\begin{bsmallmatrix}uv\\&u\\&&v\\&&&1\end{bsmallmatrix}\begin{bsmallmatrix}1\\&1\\&x&1\\&&&1\end{bsmallmatrix}\begin{bsmallmatrix}1&&&y\varpi^{-4}\\&1\\&&1\\&&&1\end{bsmallmatrix}\begin{bsmallmatrix}&&&\varpi^{-5}\\&&1\\&-1\\-\varpi^5\end{bsmallmatrix}d^{-1}\\
 \label{tildePhicalceq7}s_4(u,v,x,y)&=d\begin{bsmallmatrix}uv\\&u\\&&v\\&&&1\end{bsmallmatrix}\begin{bsmallmatrix}1\\&1\\&x\varpi&1\\&&&1\end{bsmallmatrix}\begin{bsmallmatrix}1&&&y\varpi^{-4}\\&1\\&&1\\&&&1\end{bsmallmatrix}\begin{bsmallmatrix}&&&\varpi^{-5}\\&1\\&&1\\-\varpi^5\end{bsmallmatrix}d^{-1}
\end{align}
for $u,v\in\OF^\times$ and $x,y\in\OF$. If we set
\begin{align}
 \label{tildePhicalceq8}\tilde S_1&=\{s_1(u,v,x,y):u,v\in(\OF/\p)^\times,\:x,y\in\OF/\p^2\},\\
 \label{tildePhicalceq9}\tilde S_2&=\{s_2(u,v,x,y):u,v\in(\OF/\p)^\times,\:x\in\OF/\p,\:y\in\OF/\p^2\},\\
 \label{tildePhicalceq10}\tilde S_3&=\{s_3(u,v,x,y):u,v\in(\OF/\p)^\times,\:x\in\OF/\p^2,\:y\in\OF/\p\},\\
 \label{tildePhicalceq11}\tilde S_4&=\{s_4(u,v,x,y):u,v\in(\OF/\p)^\times,\:x,y\in\OF/\p\},
\end{align}
then $\tilde S=\bigsqcup_{i=1}^4\tilde S_i$. Since $g$ has the form (\ref{tildePhicalceq1}), one can verify by direct computation that
\begin{equation}\label{tildePhicalceq2}
 s\in\tilde S_i\text{ and }s'\in\tilde S_j\text{ with }i\neq j\;\Longrightarrow\;sgs'{}^{-1}\notin H.
\end{equation}
Hence, by \eqref{tildePhiformula2}, $\tilde\Phi^\pm_{\rm new}(g)=\sum_{i=1}^4\tilde\Phi^\pm_{{\rm new},i}(g)$ with
\begin{equation}\label{tildePhicalceq3}
 \tilde\Phi^\pm_{{\rm new},i}(g)=\sum_{\substack{s,s'\in \tilde S_i\\sgs'{}^{-1}\in H}}\chi(sgs'{}^{-1}).
\end{equation}
Consider the case $i=1$. The condition $s_1(u,v,x,y)gs_1(u',v',x',y')^{-1}\in H$ implies that
\begin{equation}\label{tildePhicalceq12}
 a\in\p^{-1},\quad b\in\OF,\quad c\in\p^{-2},\quad e\in\p,\quad a\varpi x+b\in\p.
\end{equation}
Conversely, if these conditions are satisfied, then we can set
\begin{equation}\label{tildePhicalceq13}
 u'=u,\quad v'=v,\quad x'=x-e\varpi^{-1},\quad y'=(ab+c)\varpi^{-2}+y,
\end{equation}
and find that
\begin{equation}\label{tildePhicalceq14}
 s_1(u,v,x,y)gs_1(u',v',x',y')^{-1}=\begin{bsmallmatrix}1&v(a\varpi x+b)\varpi^{-1}&-a\varpi u\\&1&&-a\varpi u\\&&1&-v(a\varpi x+b)\varpi^{-1}\\&&&1\end{bsmallmatrix}\in H.
\end{equation}
It follows that, assuming the first four conditions in \eqref{tildePhicalceq12} are satisfied,
\begin{align}\label{tildePhicalceq15}
 \tilde\Phi^\pm_{{\rm new},1}(g)&=\sum_{\substack{u,v\in(\OF/\p)^\times\\x,y\in\OF/\p^2\\a\varpi x+b\in\p}}\psi_0(v(a\varpi x+b)\varpi^{-1})\nonumber\\
 &=(q-1)q^2\sum_{\substack{v\in(\OF/\p)^\times\\x\in\OF/\p^2\\a\varpi x+b\in\p}}\psi(v(a\varpi x+b)\varpi^{-2}),
\end{align}
where we recall that $\psi_0(x)=\psi(x\varpi^{-1})$. While it is easy to calculate this further, we will refrain from doing so, because formula \eqref{tildePhicalceq15} is sufficient for the calculations in the following section.


Consider the case $i=2$. The condition $s_2(u,v,x,y)gs_2(u',v',x',y')^{-1}\in H$ implies that
\begin{equation}\label{tildePhicalceq17}
 a\in\OF,\quad b\in\OF,\quad c\in\p^{-2},\quad e\in\OF,\quad 1+ex\in\OF^\times.
\end{equation}
Conversely, if these conditions are satisfied, then we can set
\begin{equation}\label{tildePhicalceq18}
 u'=u(1+ex)^{-1},\quad v'=v(1+ex),\quad x'=x(1+ex)^{-1},\quad y'=(ab+c)\varpi^{2}+y,
\end{equation}
and find that
\begin{equation}\label{tildePhicalceq19}
 s_2(u,v,x,y)gs_2(u',v',x',y')^{-1}=\begin{bsmallmatrix}1&v(a-bx)&\frac{(b+ae)u}{1+ex}\\&1&\frac{eu}{v(1+ex)}&bu\\&&1&v(bx-a)\\&&&1\end{bsmallmatrix}\in H.
\end{equation}
It follows that, assuming the first four conditions in \eqref{tildePhicalceq17} are satisfied,
\begin{align}\label{tildePhicalceq20}
 \tilde\Phi^\pm_{{\rm new},2}(g)&=
 \sum_{\substack{u,v\in(\OF/\p)^\times\\x\in\OF/\p\\y\in\OF/\p^2\\1+ex\in\OF^\times}}\psi_0\Big(v(a-bx)+\frac{eu}{v(1+ex)}\Big)\nonumber\\
 &=q^2\sum_{\substack{u,v\in(\OF/\p)^\times\\x\in\OF/\p\\1+ex\in\OF^\times}}\psi\Big(v(a-bx)\varpi^{-1}+\frac{eu}{v(1+ex)}\varpi^{-1}\Big).
\end{align}
This can also be calculated further, but we will leave it at this stage and use it as input for the proof of Proposition \ref{J0-prop} in the following section.

Consider the case $i=3$. The condition $s_3(u,v,x,y)gs_3(u',v',x',y')^{-1}\in H$ implies that
\begin{equation}\label{tildePhicalceq21}
 a\in\p^{-2},\quad b\in\p^{-1},\quad c\in\p^{-3},\quad e\in\p,\quad b+a\varpi x\in\OF,\quad 1-(ab+c)\varpi^3y\in\OF^\times.
\end{equation}
Conversely, if these conditions are satisfied, then we can set
\begin{equation}\label{tildePhicalceq22}
 u'=u(1-(ab+c)\varpi^3y),\quad v'=v(1-(ab+c)\varpi^3y),\quad x'=x-e\varpi^{-1},\quad y'=\frac y{1-(ab+c)\varpi^3y},
\end{equation}
and find that
\begin{equation}\label{tildePhicalceq23}
 s_3(u,v,x,y)gs_3(u',v',x',y')^{-1}=\begin{bsmallmatrix} *&-\frac{v(b+a\varpi x)y}{1-(ab+c)\varpi^3y}&*&*\\ *&*&0&*\\ *&*&*&*\\-\frac{(ab+c)\varpi^4}{uv(1-(ab+c)\varpi^3y)^2}&*&*&*\end{bsmallmatrix}\in H.
\end{equation}
It follows that, assuming the first four conditions in \eqref{tildePhicalceq21} are satisfied,
\begin{align}\label{tildePhicalceq24}
 \tilde\Phi^\pm_{{\rm new},3}(g)&=\sum_{\substack{u,v\in(\OF/\p)^\times\\x\in\OF/\p^2\\y\in\OF/\p\\b+a\varpi x\in\OF\\1-(ab+c)\varpi^3y\in\OF^\times}}\psi_0\bigg(-\frac{v(b+a\varpi x)y}{1-(ab+c)\varpi^3y}-t\frac{(ab+c)\varpi^3}{uv(1-(ab+c)\varpi^3y)^2}\bigg)\nonumber\\
 &=\sum_{\substack{u,v\in(\OF/\p)^\times\\x\in\OF/\p^2\\y\in\OF/\p\\b+a\varpi x\in\OF\\1-(ab+c)\varpi^3y\in\OF^\times}}\psi\bigg(-\frac{v(b+a\varpi x)y\varpi^{-1}}{1-(ab+c)\varpi^3y}-u(ab+c)\varpi^2\bigg)\nonumber\\
 &=\sum_{\substack{u,v\in(\OF/\p)^\times\\x\in\OF/\p^2\\y\in\OF/\p\\b+a\varpi x\in\OF\\1-(ab+c)\varpi^3y\in\OF^\times}}\psi\Big(-v(b+a\varpi x)y\varpi^{-1}-u(ab+c)\varpi^2\Big).
\end{align}
This formula, which could be evaluated further, will serve as input in the proof of Proposition~\ref{J0-prop} below.

Consider the case $i=4$. The condition $s_4(u,v,x,y)gs_4(u',v',x',y')^{-1}\in H$ implies that
\begin{equation}\label{tildePhicalceq25}
 a\in\p^{-1},\quad b\in\p^{-1},\quad c\in\p^{-3},\quad e\in\OF,\quad1+ex\in\OF^\times,\quad 1-(ab+c)\varpi^3y\in\OF^\times.
\end{equation}
Conversely, if these conditions are satisfied, then we can set
\begin{alignat}{2}\label{tildePhicalceq26}
 &u'=u\frac{1-(ab+c)\varpi^3y}{1+ex},\qquad &&v'=v(1+ex)(1-(ab+c)\varpi^3y),\nonumber\\
 &x'=\frac x{1+ex},&&y'=\frac y{1-(ab+c)\varpi^3y},
\end{alignat}
and find that
\begin{equation}\label{tildePhicalceq27}
 s_4(u,v,x,y)gs_4(u',v',x',y')^{-1}=\begin{bsmallmatrix} *&-\frac{v(a-bx)y\varpi}{1-(ab+c)\varpi^3y}&*&*\\ *&*&\frac{eu}{v(1+ex)(1-(ab+c)\varpi^3y)}&*\\ *&*&*&*\\-\frac{(ab+c)\varpi^4}{uv(1-(ab+c)\varpi^3y)^2}&*&*&*\end{bsmallmatrix}\in H.
\end{equation}
It follows that, assuming the first four conditions in \eqref{tildePhicalceq25} are satisfied,
\begin{align}\label{tildePhicalceq28}
 \tilde\Phi^\pm_{{\rm new},4}(g)&=\sum_{\substack{u,v\in(\OF/\p)^\times\\x,y\in\OF/\p\\1+ex\in\OF^\times\\1-(ab+c)\varpi^3y\in\OF^\times}}
 \psi_0\bigg(-\frac{v(a-bx)y\varpi}{1-(ab+c)\varpi^3y}+\frac{eu}{v(1+ex)(1-(ab+c)\varpi^3y)}\nonumber\\
 &\hspace{45ex}-t\frac{(ab+c)\varpi^3}{uv(1-(ab+c)\varpi^3y)^2}\bigg)\nonumber\\
 &=\sum_{\substack{u,v\in(\OF/\p)^\times\\x,y\in\OF/\p\\1+ex\in\OF^\times\\1-(ab+c)\varpi^3y\in\OF^\times}}
 \psi\bigg(-\frac{v(a-bx)y}{1-(ab+c)\varpi^3y}+\frac{eu\varpi^{-1}}{v(1+ex)(1-(ab+c)\varpi^3y)}\nonumber\\
 &\hspace{45ex}-t\frac{(ab+c)\varpi^2}{uv(1-(ab+c)\varpi^3y)^2}\bigg).
\end{align}
This formula is difficult to evaluate explicitly, but the current form will serve as sufficient input in the proof of Proposition~\ref{J0-prop}.
\begin{remark}\label{rem:kloosterman-type}
Recall that \(\mathbf{k}=\OF/\p\), and let

  \[
  \psi_{\mathbf{k}}(\bar z)=\psi(\varpi^{-1}z), \qquad z\in \OF,
\]
be the induced non-trivial additive character of \(\mathbf{k}\). For \(A,B,C\in \mathbf{k}\), define  a two-dimensional hyper-Kloosterman sum
\[
  \mathcal K(A,B,C;\psi_\mathbf{k})
  =
  \sum_{u,v\in\mathbf{k}^\times}
  \psi_{\mathbf{k}}\left(Av+B\frac{u}{v}+C\frac{1}{uv}\right)= 
  \sum_{r,s\in \mathbf{k}^{\times}}
  \psi_{\mathbf{k}}\left(Ar+Bs+\frac{C}{r^2s}\right).
\]
Assume now that the first four conditions in \eqref{tildePhicalceq25} hold. For \(x,y\in \mathbf{k}\),
choose any lifts to~\(\OF\), and put
\[
  \Delta_y = 1-(ab+c)\varpi^3y, \qquad E_x = 1+ex.
\]
Whenever \(E_x,\Delta_y\in \OF^\times\), define
\[
  A_{x,y}
  =
  -\overline{\frac{\varpi(a-bx)y}{\Delta_y}},
  \qquad
  B_{x,y}
  =
  \overline{\frac{e}{E_x\Delta_y}},
  \qquad
  C_y
  =
  -\overline{\frac{t(ab+c)\varpi^3}{\Delta_y^2}},
\]
where the bar denotes reduction modulo \(\p\). These quantities are independent of
the chosen lifts of \(x\) and \(y\). With this notation, equation
\eqref{tildePhicalceq28} may be written in the cleaner form
\[
  \tilde\Phi^\pm_{{\rm new},4}(g)
  =
  \sum_{\substack{x,y\in \mathbf{k}\\ E_x\in\OF^\times,\ \Delta_y\in\OF^\times}}
  \mathcal K(A_{x,y},B_{x,y},C_y;\psi_{\mathbf{k}}).\]
\end{remark}

\subsection{The local Whittaker integral for the newvector}\label{s:localwhitnew}
Recall the definition \eqref{e:normalizedJ0} of the local Whittaker integral $J_0(v)$. Using the results of the previous section, we can evaluate $J_0( f^{\pm}_{{\rm new}})$ explicitly.

\begin{proposition}\label{J0-prop}
 We have
 $$
  J_0( f^{\pm}_{{\rm new}}) = q^5(1-q^{-2})^{-2}.
 $$
\end{proposition}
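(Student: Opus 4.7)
The plan is to compute $J_0(f^\pm_{\rm new},f^\pm_{\rm new})=\int_U\Phi_{f^\pm_{\rm new},f^\pm_{\rm new}}(u)\psi^{-1}_{c_1,c_2}(u)\,du$ by reducing to the formulas for $\tilde\Phi^\pm_{\rm new}$ from Section~\ref{MC-unipotent}. Since $\tilde f^\pm_{\rm new}=\sigma_\chi^\pm(d)f^\pm_{\rm new}$ with $d=d_{\varpi,\varpi}$, the $G$-invariance of $\langle\cdot,\cdot\rangle$ gives $\Phi_{f^\pm_{\rm new},f^\pm_{\rm new}}(u)=\tilde\Phi^\pm_{\rm new}(dud^{-1})$. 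Parametrizing $u=u(a,b,c,e)$ as in Section~\ref{MC-unipotent}, the change of variables $v=dud^{-1}$ scales the four coordinates by $\varpi,\varpi^2,\varpi^3,\varpi$ respectively; this multiplies Haar measure by $q^{-7}$ and converts $\psi(-c_1 a-c_2 e)$ into $\psi_0(-c_1 a-c_2 e)$. Hence
\[
  J_0(f^\pm_{\rm new},f^\pm_{\rm new})=q^{7}\int_U\tilde\Phi^\pm_{\rm new}(u)\,\psi_0(-c_1 a-c_2 e)\,du.
\]

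I then treat each piece of the decomposition $\tilde\Phi^\pm_{\rm new}=\sum_{i=1}^4\tilde\Phi^\pm_{{\rm new},i}$ separately, using \eqref{tildePhicalceq15}, \eqref{tildePhicalceq20}, \eqref{tildePhicalceq24}, \eqref{tildePhicalceq28}. For $i=1$, the substitution $b\mapsto b'-a\varpi x$ dictated by the constraint $a\varpi x+b\in\p$ reduces the $b$-integral to $\int_\p\psi(vb'/\varpi^2)\,db'=0$. For $i=3$, after the analogous substitution together with $c\mapsto c'-ab$ (simplifying both terms in the exponent of \eqref{tildePhicalceq24}), the entire integrand becomes independent of $a$ apart from the factor $\psi_0(-c_1 a)$, and $\int_{\p^{-2}}\psi_0(-c_1 a)\,da=0$ by standard orthogonality. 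For $i=4$, substituting $c\mapsto c'-ab$, rescaling $a'=a\varpi,b'=b\varpi\in\OF$, and writing $c''=c'\varpi^3\in\OF$, the $a'$-dependent part of the integrand equals $\psi_0(\alpha a')\psi(-c_1 a'/\varpi^2)$ with $\alpha=-vy/(1-c''y)\in\OF$; since $\alpha\varpi-c_1\in-c_1+\p\subset\OF^\times$, we have $(\alpha\varpi-c_1)/\varpi^2\notin\OF$ and the $a'$-integral vanishes.

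Only $i=2$ contributes. Since $c$ is absent from the summand in \eqref{tildePhicalceq20}, the $c$-integral over $\p^{-2}$ yields $q^2$, and the remaining $(a,b,e)$-integrand factors into three character integrals on $\OF$: the $a$-integral forces $v\equiv c_1\pmod\p$, the $b$-integral (with $v=c_1$) forces $x\equiv 0\pmod\p$, and the $e$-integral (with $x=0$, making the constraint $1+ex\in\OF^\times$ automatic) forces $u\equiv c_1 c_2\pmod\p$. Hence the $i=2$ contribution equals $q^2\cdot q^2\cdot 1=q^4$, and so $J_0(f^\pm_{\rm new},f^\pm_{\rm new})=q^{7}\cdot q^4=q^{11}$. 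Combining with $\langle f^\pm_{\rm new},f^\pm_{\rm new}\rangle=q^2(q^2-1)^2$ from \eqref{e:normnewvector} gives
\[
  J_0(f^\pm_{\rm new})=\frac{q^{11}}{q^2(q^2-1)^2}=\frac{q^9}{(q^2-1)^2}=q^5(1-q^{-2})^{-2}.
\]

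The main obstacle is the bookkeeping for case $i=4$: the non-linear $c$-dependence through $(1-(ab+c)\varpi^3 y)$ and the ``hyper-Kloosterman-like'' structure of \eqref{tildePhicalceq28} obscure the vanishing mechanism, but after the sequence of substitutions above the underlying orthogonality reduces to a one-line calculation. The more surprising outcome is the clean cancellation: among four \emph{a priori} non-trivial pieces, only the simplest (case $i=2$, of smallest support) contributes, and the final answer is essentially dictated by the fact that the ``Whittaker-compatible'' piece corresponds to those $s,s'\in\tilde S$ arising from the second summand in Lemma~\ref{Hcosetlemma3}.
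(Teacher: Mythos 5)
Your proof is correct and follows essentially the same strategy as the paper: reduce to the shifted matrix coefficient $\tilde\Phi^\pm_{\rm new}$ via the $d_{\varpi,\varpi}$-conjugation (picking up the Jacobian $q^7$), decompose into the four pieces from Sect.~\ref{MC-unipotent}, show $i=1,3,4$ vanish by a single orthogonality in one variable, and compute $i=2$. The only cosmetic difference is that in case $i=1$ the paper kills the integral via the $a$-variable ($\int_{\p^{-1}}\psi(-c_1a/\varpi)\,da=0$) rather than the $b'$-variable, and in case $i=4$ the paper keeps $a\in\p^{-1}$ rather than rescaling; both give the same vanishing.
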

\begin{proof}
By change of variables and using \eqref{e:normnewvector}, we obtain 
\begin{align*}J_0(f^{\pm}_{{\rm new}}) &= \frac{1}{\langle f^{\pm}_{{\rm new}},  f^{\pm}_{{\rm new}}\rangle} \int^{\st}\limits_U \langle u f^{\pm}_{{\rm new}},  f^{\pm}_{{\rm new}}\rangle  \psi_{c_1, c_2}^{-1}(u) du,\\
&=\frac{q^7}{\langle f^{\pm}_{{\rm new}},  f^{\pm}_{{\rm new}}\rangle} \int^{\st}\limits_U \langle d_{\varpi, \varpi}^{-1} u d_{\varpi, \varpi} f^{\pm}_{{\rm new}},  f^{\pm}_{{\rm new}}\rangle  \psi_{c_1, c_2}^{-1}(d_{\varpi, \varpi}^{-1}ud_{\varpi, \varpi}) du, \\
&=\frac{q^7}{\langle f^{\pm}_{{\rm new}},  f^{\pm}_{{\rm new}}\rangle} \int^{\st}\limits_U \langle  u \tilde f^{\pm}_{{\rm new}},  \tilde f^{\pm}_{{\rm new}}\rangle\psi_{\frac{c_1}{\varpi}, \frac{c_2}{\varpi}}^{-1}(u) du,\\
&=\frac{q^5}{(q-1)^2(q+1)^2} \int^{\st}\limits_U \tilde \Phi^\pm_{{\rm new}}(u)  \psi_{\frac{c_1}{\varpi}, \frac{c_2}{\varpi}}^{-1}(u) du.
\end{align*}
From Sect.~\ref{MC-unipotent}, it is clear that $J_0(f^{\pm}_{{\rm new}}) = \frac{q^5}{(q-1)^2(q+1)^2} (J_{0, 1} + J_{0, 2} + J_{0, 3} + J_{0, 4})$, where, for $i\in\{1,2,3,4\}$,
$$
 J_{0,i} = \int\limits_U  \tilde\Phi^\pm_{{\rm new},i}(u) \psi_{\frac{c_1}{\varpi}, \frac{c_2}{\varpi}}^{-1}(u)\,du.
$$
Let us first compute $J_{0, 1}$. From (\ref{tildePhicalceq12}) and (\ref{tildePhicalceq15}), we have
\begin{align*}
 J_{0,1} &= \int\limits_{\substack{a \in \p^{-1}, b \in \OF \\ c \in \p^{-2}, e \in \p}} (q-1)q^2\sum_{\substack{v\in(\OF/\p)^\times\\x\in\OF/\p^2\\a\varpi x+b\in\p}}\psi(v(a\varpi x+b)\varpi^{-2}) \psi\Big(-\frac{c_1a+c_2e}{\varpi}\Big)\,da\,db\,dc\,de \\
 &= q^3(q-1)\int\limits_{a \in \p^{-1}, b \in \OF} \sum_{\substack{v\in(\OF/\p)^\times\\x\in\OF/\p^2\\a\varpi x+b\in\p}}\psi(v(a\varpi x+b)\varpi^{-2}) \psi\Big(-\frac{c_1a}{\varpi}\Big)\,da\,db\\
 &= q^3(q-1)\int\limits_{a \in \p^{-1}, b \in \p} \sum_{\substack{v\in(\OF/\p)^\times\\x\in\OF/\p^2}}\psi(vb\varpi^{-2}) \psi\Big(-\frac{c_1a}{\varpi}\Big)\,da\,db.
\end{align*}
Since $c_1 \in \OF^\times$, integration over the $a$ variable shows that $J_{0,1} = 0$.

Next we compute $J_{0,2}$.  From \eqref{tildePhicalceq17} and \eqref{tildePhicalceq20},
$$J_{0,2} = q^2 \int\limits_{\substack{a,b,e \in \OF \\ c \in \p^{-2}}} \sum_{\substack{u,v\in(\OF/\p)^\times\\x\in\OF/\p\\1+ex\in\OF^\times}} \psi(\varpi^{-1}va) \psi(-\varpi^{-1}vbx) \psi\Big(\frac{\varpi^{-1}eu}{v(1+ex)}\Big) \psi\Big(-\frac{c_1a+c_2e}{\varpi}\Big)\,da\,db\,dc\,de.
$$
Note that
\begin{align*}
\int\limits_{a \in \OF} \sum\limits_{v \in (\OF/\p)^\times} \psi(\varpi^{-1}a(v-c_1))\,da &= \int\limits_{\OF} \psi(\varpi^{-1}a(c_1-c_1)) da + \int\limits_{\OF} \sum\limits_{\substack{v \in (\OF/\p)^\times\\v \neq c_1}} \psi(\varpi^{-1}a(a-c_1))\,da \\
&= 1 + 0 = 1.
\end{align*}
Hence,
$$
 J_{0,2} = q^4 \int\limits_{b, e \in \OF} \sum_{\substack{u\in(\OF/\p)^\times\\x\in\OF/\p\\1+ex\in\OF^\times}} \psi(-\varpi^{-1}c_1bx) \psi\Big(\frac{\varpi^{-1}eu}{c_1(1+ex)}\Big) \psi\Big(-\frac{c_2e}{\varpi}\Big)\,db\,de.
$$
Similar to above,
\begin{align*}
\int\limits_{b \in \OF} \sum\limits_{\substack{x \in \OF/\p\\1+ex \in \OF^\times}} \psi(-\varpi^{-1}c_1bx)\,db &= \int\limits_{\OF} \psi(0)\,db + \int\limits_{b \in \OF} \sum\limits_{\substack{x \in (\OF/\p)^\times\\1+ex \in \OF^\times}} \psi(-\varpi^{-1}c_1bx) \,db\\
& = 1 + 0 = 1,
\end{align*}
so that
\begin{align*}
J_{0,2} &= q^4 \int\limits_{\OF} \sum\limits_{u \in (\OF/\p)^\times} \psi(\varpi^{-1}e(u/c_1-c_2))\,de \\
&= q^4 \Big(\int\limits_\OF  \psi(\varpi^{-1}e((c_1c_2)/c_1-c_2))\,de + \int\limits_\OF  \sum\limits_{\substack{u \in (\OF/\p)^\times\\u \neq c_1c_2}} \psi(\varpi^{-1}e(u/c_1-c_2))\,de\Big) \\
&= q^4(1+0) = q^4.
\end{align*}

Next we compute $J_{0,3}$. From \eqref{tildePhicalceq21} and \eqref{tildePhicalceq24},
$$
 J_{0,3} =\int\limits_{\substack{a \in \p^{-2}, b \in \p^{-1} \\ c \in \p^{-3}, e \in \p}} \sum_{\substack{u,v\in(\OF/\p)^\times\\x\in\OF/\p^2\\y\in\OF/\p\\b+a\varpi x\in\OF\\1-(ab+c)\varpi^3y\in\OF^\times}}\!\!\!\psi\Big(-\frac{v(b+a\varpi x)y}{\varpi}-u(ab+c)\varpi^2\Big) \psi\Big(-\frac{c_1a+c_2e}{\varpi}\Big) \,da\,db\,dc\,de.
$$
The integral over the variable $e$ produces a factor $q^{-1}$. With a change of variable $c \to c-ab$, followed by $b \to b-a\varpi x$ we get
$$
 J_{0,3} = q^{-1} \int\limits_{\substack{a \in \p^{-2}, b \in \OF \\ c \in \p^{-3}}} \sum_{\substack{u,v\in(\OF/\p)^\times\\x\in\OF/\p^2\\y\in\OF/\p\\1-c\varpi^3y\in\OF^\times}}\psi(-vby\varpi^{-1}-uc\varpi^2) \psi\Big(-\frac{c_1a}{\varpi}\Big)\,da\,db\,dc.
$$
Integration over the $a$ variable shows that $J_{0,3} = 0$.

Finally, we will compute $J_{0,4}$. From \eqref{tildePhicalceq25} and \eqref{tildePhicalceq28},
\begin{align*}
J_{0,4} &= \int\limits_{\substack{a, b \in \p^{-1}\\e \in \OF, c \in \p^{-3}}} \sum_{\substack{u,v\in(\OF/\p)^\times\\x,y\in\OF/\p\\1+ex\in\OF^\times\\1-(ab+c)\varpi^3y\in\OF^\times}}\psi\bigg(-\frac{v(a-bx)y}{1-(ab+c)\varpi^3y}+\frac{eu\varpi^{-1}}{v(1+ex)(1-(ab+c)\varpi^3y)}\nonumber\\
 &\hspace{33ex}-t\frac{(ab+c)\varpi^2}{uv(1-(ab+c)\varpi^3y)^2}\bigg)\psi\Big(-\frac{c_1a+c_2e}{\varpi}\Big)\,da\,db\,dc\,de.
\end{align*}
A change of variable $c \to c-ab$ leads to
\begin{align*}
 J_{0,4} &= \int\limits_{\substack{a, b \in \p^{-1}\\e \in \OF, c \in \p^{-3}}} \sum_{\substack{u,v\in(\OF/\p)^\times\\x,y\in\OF/\p\\1+ex\in\OF^\times\\1-c\varpi^3y\in\OF^\times}}\psi\bigg(-\frac{v(a-bx)y}{1-c\varpi^3y}+\frac{eu\varpi^{-1}}{v(1+ex)(1-c\varpi^3y)}\nonumber\\
 &\hspace{35ex}-t\frac{c\varpi^2}{uv(1-c\varpi^3y)^2}\bigg)\psi\Big(-\frac{c_1a+c_2e}{\varpi}\Big)\,da\,db\,dc\,de.
\end{align*}
The integral contains the factor
$$
 \int\limits_{a \in \p^{-1}} \psi\Big(-\frac a{\varpi}\Big(\frac{vy\varpi}{1-c\varpi^3y}+c_1\Big)\Big)\,da = \int\limits_{a \in \p^{-1}} \psi\Big(-\frac a{\varpi}\Big)\,da = 0,
$$
so that $J_{0,4} = 0$. Putting all this together, we see that 
$$J_0(f^{\pm}_{{\rm new}}) = \frac{q^5}{(q-1)^2(q+1)^2} (0 + q^4 + 0 + 0),$$
and hence we get the proposition.
\end{proof}
\subsection{The Bessel model and associated local integral}
Let $S = \mat{a}{b/2}{b/2}{c}$ with $a,b,c \in F$ and $d = b^2-4ac \neq 0$. Set
\begin{align*}
 T_S &:= \{g \in \GL_2(F):\, {}^tg S g = {\rm det}(g) S\}\\
 &\phantom{;}= \left\{\mat{x+by/2}{cy}{-ay}{x-by/2} :\, x, y \in F, x^2 - dy^2/4 \neq 0\right\}.
\end{align*}
Let $L = F(\sqrt{d})$ if $d \not\in (F^\times)^2$ and $L = F \oplus F$ otherwise. The map $\phi_S:T_S \to L^\times$, given by
\begin{equation}\label{e:TSiso}T_S \ni \mat{x+by/2}{cy}{-ay}{x-by/2} \longmapsto \begin{cases} x+y\sqrt{d}/2 & \text{ if } L \text{ is a field,} \\ (x+y\sqrt{d}/2, x-y\sqrt{d}/2) & \text{ if } L = F \oplus F,
\end{cases}
\end{equation}
is a group isomorphism. We embed $T_S$ in $G$ via
\[
 T_S \ni g \longmapsto \mat{g}{}{}{{\rm det}(g) g'} \text{ where } g' := \mat{}{1}{1}{} {}^tg^{-1} \mat{}{1}{1}{}.
\]
Note that if $\tilde S=\lambda \,^t\!ASA$ for some $\lambda \in F^\times$ and $A \in \GL_2(F)$, then $T_{\tilde S} = A^{-1} T_S A \simeq L^\times$.
Let $N$ be the unipotent radical of the Siegel parabolic subgroup given by
\[
 N = \{\begin{bsmallmatrix}1&&u&z\\&1&w&u\\&&1\\&&&1\end{bsmallmatrix} :\, u, w, z \in F\}.
\]
Let $\theta_S$ be the character of $N$ given by
\begin{equation}\label{thetaSdef}
 \theta_S(\begin{bsmallmatrix}1&&u&z\\&1&w&u\\&&1\\&&&1\end{bsmallmatrix}) := \psi({\rm tr}(S  \mat{u}{z}{w}{u}\mat{}{1}{1}{})) = \psi(az+bu+cw).
\end{equation}
Let $\Lambda$ be any character of $L^\times$ such that $\Lambda |_{F^\times} = 1$. We identify $\Lambda$ with a character of $T_S$ using the isomorphism \eqref{e:TSiso}; more precisely, we let $\Lambda_S$ be the character of $T_S$ given by
\begin{equation}
 \Lambda_S(t) := \Lambda(\phi_S(t)),\qquad t\in T_S.
\end{equation}
Note that if $\tilde S=\lambda \,^t\!ASA$ for some $\lambda \in F^\times$ and $A \in \GL_2(F)$, then $\Lambda_{\tilde S}(A^{-1} t A) = \Lambda_S(t)$.

For an irreducible, admissible, unitary, tempered representation $(\pi,V_\pi)$ of trivial central character and vectors $v_1, v_2, v \in V_\pi$, with $v \neq 0$, define
\begin{equation}\label{Besseldefn}
 B_{\Lambda, \theta_S}(v_1, v_2) := \int\limits_{F^\times \backslash T_S} \int\limits_N^{\st} \Phi_{v_1,v_2}(tn) \Lambda_S^{-1}(t) \theta_S^{-1}(n)\,dn\,dt,
\end{equation}
\begin{equation}\label{Bessel-fnal-defn}
 B_{\Lambda, \theta_S}(v) := \frac{B_{\Lambda, \theta_S}(v,v)}{\langle v, v\rangle}.
\end{equation}
The representation $\pi$ is said to have a $(S, \Lambda)$-Bessel model if $\Hom_{T_SN}(\pi,\C_{\Lambda \otimes \theta_S}) \neq 0$, in which case the space is known to be one-dimensional. It follows from \cite[Prop.~5.7]{walds12} that $\pi$ has  a $(S, \Lambda)$-Bessel model if and only if there exists $v_1, v_2$ such that $B_{\Lambda, \theta_S}(v_1, v_2) \neq 0$, in which case the pairing $(v_1, v_2) \mapsto B_{\Lambda, \theta_S}(v_1, v_2)$  descends to a non-degenerate pairing on a one-dimensional quotient of $\pi$. Therefore, $\pi$ has  a $(S, \Lambda)$-Bessel model if and only if there is a non-zero vector $v$ in the space of $\pi$ such that $B_{\Lambda, \theta_S}(v) \neq 0$, in which case $v$ is said to be a $(S, \Lambda)$-test vector for $\pi$. We will refer to $B_{\Lambda, \theta_S}(v)$ as the \emph{local Bessel integral} of type $(S, \Lambda)$ for $v$.

Suppose that $\tilde S=\lambda \,^t\!ASA$ for some $\lambda \in F^\times$ and $A \in \GL_2(F)$. A straightforward calculation verifies that
\begin{equation}\label{Btrafoeq}
 B_{\Lambda, \theta_S}(v')  = |\lambda\det(A)|^3 \ B_{\Lambda, \theta_{\tilde S}}(v), \quad \text{ where } v'=\pi(\mat{\lambda A}{}{}{A'})v.
\end{equation}
Therefore, in order to compute the local Bessel integral, we may replace $S$ by $\tilde S$ (for a suitable $\lambda$ and $A$) at the cost of changing the vector $v$ by a translate. Clearly, $\pi$ has  a $(S, \Lambda)$-Bessel model if and only if it has a $(\tilde S, \Lambda)$-Bessel model. In particular, the question of whether $\pi$ has a $(S, \Lambda)$-Bessel model  depends only on $L$ and $\Lambda$ and not on the particular choice of the matrix $S$ such that $T_S \simeq L^\times$.

\subsection{The local Bessel integral for the minimal vector}\label{locbessec}

In this subsection we specialize the general setup of the previous subsection to the case
\[
  S=S_a:=\mat{a}{}{}{1},
  \qquad -a\in\OF \text{ a non-square}.
\]
Thus \(L=F(\sqrt{-a})\) is a field extension, and throughout this subsection
\(T_S,\theta_S,\Lambda_S\) refer to this fixed matrix \(S=S_a\). For elements $\alpha, \beta \in F^\times$ define
$$
 f^{\alpha, \beta}_{\rm min} := d_{\alpha, \beta}^{-1} \cdot f^{\pm}_{\rm min}.
$$
Given a character $\Lambda$ of $L^\times$ such that $\Lambda|_{F^\times} = 1$, we want to compute $B_{\Lambda, \theta_S}(f^{\alpha, \beta}_{\rm min})$  for suitable values of $\alpha, \beta$ such that $B_{\Lambda, \theta_S}(f^{\alpha, \beta}_{\rm min}) \neq 0$.

Define the non-negative integer
\begin{equation}\label{m0def}
 m_0 = m_0(\Lambda,a) := {\rm min}\{m\ge 0 : \Lambda |_{1+\varpi^m \OF[\sqrt{-a}]}= 1\}.
\end{equation}

Suppose that $m_0 \ge 2$. Then, using the fact that  $\Lambda|_{\OF^\times} = 1$ (since $\Lambda|_{F^\times} = 1$), it is easy to show that the map $y \mapsto \Lambda(1 + \varpi^{m_0 -1}y \sqrt{-a})$ is a \emph{non-trivial} additive character on $\OF$ that is trivial on $\p$. Hence there exists a unit $u_0=u_0(\Lambda, \psi, a) \in \OF^\times$ such that
\begin{equation}\label{u0def}
 \Lambda(1 + \varpi^{m_0 -1}y \sqrt{-a})= \psi(\varpi^{-1}u_0y), \quad y \in \OF
\end{equation}
Before stating the result, we fix a normalization of the quotient measure on \(F^\times\bs T_S\)
for this particular choice of \(S\). Since \(S=\mat{a}{}{}{1}\), the isomorphism
\eqref{e:TSiso} implies that
\[
  \phi_S^{-1}(1+y\sqrt{-a})=\mat{1}{y}{-ay}{1}.
\]
The map
\[
  y\longmapsto F^\times\phi_S^{-1}(1+y\sqrt{-a})
\]
identifies \(F\) with the open subset of
\(F^\times\bs T_S\simeq F^\times\bs L^\times\) represented by elements with
non-zero \(F\)-part whose complement has measure zero. We normalize the
Haar measure \(dt\) on \(F^\times\bs T_S\) by requiring that, for every locally
constant function \(\mathcal F\) on \(F^\times\bs T_S\),
\[
  \int_{F^\times\bs T_S}\mathcal F(t)\,dt
  =
  \int_F
  \mathcal F\!\left(\phi_S^{-1}(1+y\sqrt{-a})\right)
  |1+ay^2|^{-1}\,dy,
\]
where \(dy\) is the additive Haar measure on \(F\) normalized by
\(\vol(\OF)=1\). With this normalization, the image of
\(1+\OF\sqrt{-a}\) in \(F^\times\bs L^\times\) has volume \(1\), since
\(|1+ay^2|=1\) for \(y\in\OF\).

\begin{remark} If \(L/F\)
is an inert quadratic field extension and the residual characteristic is odd, then \(-a\)
is a non-square unit and with the choice of measure above we have
\[
  \vol(\OF^\times\bs\OF_L^\times)
  =
  \int_F |1+ay^2|^{-1}\,dy
  =
  1+\sum_{r\geq 1}\vol(\varpi^{-r}\OF^\times)q^{-2r}
  =
  1+q^{-1}
  =
  \frac{\zeta_F(1)}{\zeta_F(2)}.
\]
\end{remark}

We now state our result.
\begin{proposition}\label{Bessel-model-prop}
Assume that the residual characteristic of \(F\) is odd. Let
\(S=\mat{a}{}{}{1}\), where \(-a\in\OF\) is a non-square, let
\(L=F(\sqrt{-a})\), and let \(\Lambda\) be a character of \(L^\times\) such that
\(\Lambda|_{F^\times}=1\). Let the integer \(m_0\) be defined as in
\eqref{m0def} and suppose that \(m_0\geq 2\) and \(2m_0-3\geq v(a)\).
Let \(u_0\in\OF^\times\) be as in \eqref{u0def}. Then, with the above
normalization of the Haar measure on \(F^\times\bs T_S\), for
\(\alpha\in\varpi^{1-m_0}u_0(1+\p)\) and \(\beta\in\varpi(1+\p)\), we have
\[
  B_{\Lambda,\theta_S}(d_{\alpha,\beta}^{-1}\cdot f^\pm_{\rm min})
  =
  q^{-4m_0+7}.
\]
\end{proposition}
\begin{proof}
By the definitions \eqref{Besseldefn} and \eqref{Bessel-fnal-defn},
\begin{align*}
B_{\Lambda, \theta_S}(f^{\alpha, \beta}_{\rm min}) &= \int\limits_{F^\times \backslash T_S} \int\limits_N^{\st} \Phi_{f^{\alpha, \beta}_{\rm min}, f^{\alpha, \beta}_{\rm min}}(nt) \Lambda_S^{-1}(t) \theta_S^{-1}(n)\,dn\,dt \\
&= \int\limits_{F^\times \backslash T_S} \int\limits_N^{\st} f^{\pm}_{\rm min}(d_{\alpha, \beta} n t d_{\alpha, \beta}^{-1}) \Lambda_S^{-1}(t) \theta_S^{-1}(n)\,dn\,dt \\
&= \int\limits_{y \in F} \int\limits_{u,w,z \in F}^{\st}  f^{\pm}_{\rm min}(d_{\alpha, \beta} \begin{bsmallmatrix}1&&u&z\\&1&w&u\\&&1\\&&&1\end{bsmallmatrix}  \begin{bsmallmatrix}1&y\\-ya&1\\&&1&-y\\&&ya&1\end{bsmallmatrix}  d_{\alpha, \beta}^{-1}) \\
& \qquad \qquad \times \Lambda^{-1}(1+y\sqrt{-a}) \psi^{-1}(az+w) |1 + ay^2|^{-1}\,dy\,du\,dw\,dz.
\end{align*}

We need to check when $d_{\alpha, \beta} n t d_{\alpha, \beta}^{-1} \in H' = ZK' \sqcup g_\chi ZK'$, the support of $f^{\pm}_{\rm min}$. Since the $(1,1)$ entry of $g_\chi^{-1} d_{\alpha, \beta} n t d_{\alpha, \beta}^{-1}$ is $0$, we see that $d_{\alpha, \beta} n t d_{\alpha, \beta}^{-1}$ is never in $g_\chi ZK'$. One can check that $d_{\alpha, \beta} n t d_{\alpha, \beta}^{-1} \in ZK'$ if and only if
$$
 y \in \p^{m_0-1},\; u \in \p^{m_0-2},\; z \in \p^{2m_0-3},\;  w \in \p^{-1}.
$$
Hence
\begin{align*}
&B_{\Lambda, \theta_S}(f^{\alpha, \beta}_{\rm min})\\
&= \int\limits_{y \in \p^{m_0-1}} \int\limits_{\substack{u \in  \p^{m_0-2} \\  z \in \p^{2m_0-3} \\  w \in \p^{-1}}} \psi(\varpi^{-1}(\alpha y + \beta w + \beta a u y)) \psi^{-1}(az+w) \Lambda^{-1}(1+y\sqrt{-a})\,dy\,du\,dw\,dz \\
&= q^{3-2m_0}\int\limits_{y \in \p^{m_0-1}} \int\limits_{\substack{u \in  \p^{m_0-2} \\   w \in \p^{-1}}} \psi(\varpi^{-1}(\alpha y + \beta w)) \psi^{-1}(w) \Lambda^{-1}(1+y\sqrt{-a})\,dy\,du\,dw\\&= q^{5-3m_0}\int\limits_{y \in \p^{m_0-1}} \int\limits_{   w \in \p^{-1}} \psi(\varpi^{-1}(\alpha y + \beta w)) \psi^{-1}(w) \Lambda^{-1}(1+y\sqrt{-a})\,dy\,dw\,
\end{align*}
where we used $\beta a u y \in  \p$ and $\int\limits_{z \in \p^{2m_0-3}} \psi^{-1}(  z) dz = \mathrm{vol}(\p^{2m_0-3}) = q^{3-2m_0}.$ 

The integral over the $w$ variable gives $\int\limits_{ w \in \p^{-1}} \psi(w(\beta\varpi^{-1}- 1))\,dw =  q$, because $\beta\varpi^{-1}- 1 \in \p$ by hypothesis.
Finally, the integral in the $y$ variable is now equal to 
\begin{align*}
 &\int\limits_{ y \in \p^{m_0-1}} \psi(\varpi^{-1}\alpha y) \Lambda^{-1}(1+y\sqrt{-a})\,dy \\
 &\qquad= q^{1-m_0} \int\limits_{y \in \OF} \psi(\varpi^{-1}u_0y) \Lambda^{-1}(1+\varpi^{m_0-1}y\sqrt{-a})\,dy\\
 &\qquad=q^{1-m_0}.
\end{align*}
Putting it all together, we obtain $B_{\Lambda, \theta_S}(f^{\alpha, \beta}_{\rm min}) = q^{7-4m_0}$
 as required.
\end{proof}



\section{Explicit global period formulas and applications}\label{s:global}
In this section, we demonstrate how our local results enter into global applications.
\subsection{Basic global notations}\label{notations-sect}
For a commutative ring $R$, we let $G(R):=\GSp_4(R)$.  In this section, we will work in the setup of automorphic forms and representations over $G(\A)$, where $\A$ denotes the ring of adeles over~$\Q$.

Given an automorphic representation $\pi$ of $G(\A)$, the global $L$-functions denoted by $L(s, \pi)$ include the archimedean factors, so that  we have an Euler factor decomposition $L(s, \pi) = \prod_v L(s, \pi_v)$ with $v$ ranging over all the places of $\Q$. For finite set of places $S$ of $\Q$, we
use the notation $L^S(s, \pi):=\prod_{v \notin S}L(s, \pi_v)$ for
 the partial $L$-function obtained by omitting the factors corresponding to
the places in $S$.

We let $\psi$ denote the standard non-trivial additive character of $\Q \bs \A$ that is unramified at all finite places and equals  $e^{2 \pi i x}$ at $\R$. We define the character $\psi_U$ of $U(\Q)\bs U(\A)$ by
\[
 \psi_U \left(\begin{bsmallmatrix}1&a&*&*\\&1&e&*\\&&1&-a\\&&&1\end{bsmallmatrix} \right) = \psi(-a - e).
\]

We let $K_\infty$ be the maximal compact subgroup of $\Sp_4(\R)$ that fixes the point $iI_2$. For each finite prime $p$, put $K_p = G(\Z_p)$. We fix the measure on $\A^\times G(\Q)\bs G(\A)$ to be the Tamagawa measure (which gives it volume equal to 2). We take the usual Lebesgue measure on $\R$, which gives us a measure on $U(\R)$. We take the Haar measure on $U(\A)$ to be the product measure (recall that we fixed measures on $U(\Q_p)$ in Sect.~\ref{s:localprelims}). We obtain a Haar measure on $U(\Q) \bs U(\A)$ by giving $U(\Q)$ the counting measure; it can be checked that $U(\Q) \bs U(\A)$ has volume~$1$. We note that this choice of measures is compatible with the choices in \cite{chen-ichino} and \cite{LM15}.

Given measurable functions $\phi_i: \A^\times G(\Q)\bs G(\A) \rightarrow \C$ for $i=1,2$, we define the Petersson inner product
$$
 \langle \phi_1, \phi_2 \rangle = \int\limits_{\A^\times G(\Q)\bs G(\A)} \phi_1(g) \overline{\phi_2(g)} \,dg
$$
whenever this integral converges.

The notation
$A \ll_{x,y,z} B$ or $A = O_{x,y,z}(B)$ will mean there exists a positive
constant $C$ depending at most on $x, y, z$ such that $|A| \le C |B|$.  We use $A \asymp_{x,..y} B$ to mean that $A \ll_{x,..y} B$ and $B \ll_{x,..y} A$.
 The symbol $\epsilon$ will denote a small positive quantity whose value may change from line to line.

\subsection{An explicit relation between the Whittaker and \texorpdfstring{$L^2$}{}-normalizations}
Let $\pi = \otimes_v \pi_v$ be an irreducible, unitary, cuspidal automorphic representation of $G(\A)$ with trivial central character. We further assume that $\pi$ is globally generic, i.e.,  for each non-zero $\phi$ in the space of $\pi$ the function
\begin{equation}\label{global-whittaker-defn}
 W_\phi(g):= \int\limits_{U(\Q)\bs U(\A)}\phi(ug)\psi_U^{-1}(u)\,du
\end{equation}
is non-zero. It is then known (see Section 1.1 of \cite{chen-ichino} or Section 1.1 of \cite{Schmidt2016}) that $\pi$ is not CAP and that $\pi$ has a global functorial transfer to an automorphic representation $\Pi$ of $\GL_4(\A)$; we say that $\pi$ is of general type if $\Pi$ is cuspidal and we say that $\pi$ is endoscopic otherwise.

Given $\phi$ in the space of $\pi$ such that $W_\phi(1)$ is non-zero,  it is of considerable interest to understand the quantity $\frac{|W_\phi(1)|^2}{\langle \phi, \phi \rangle}$. This ratio quantifies  the difference between the arithmetic/Whittaker normalization of $\phi$ (the first Whittaker coefficient being made equal to 1) and the $L^2$-normalization of $\phi$ (the Petersson norm of $\phi$ being made equal to 1) which is crucial for various analytic and arithmetic applications. More generally, if $W_\phi(1) = 0$, one can choose some $g_0 \in G(\A)$ such that  $W_\phi(g_0)\neq 0$ and try to understand $\frac{|W_\phi(g_0)|^2}{\langle \phi, \phi \rangle}$. Lapid and Mao made the following remarkable conjecture in \cite{LM15}.

\begin{conjecture}\label{lapid-mao}
 Let $\pi\simeq\otimes_v \pi_v$ be an irreducible, unitary, cuspidal, generic automorphic representation of $G(\A)$ with trivial central character. Let $\phi$ be a vector in the space of $\pi$ corresponding to $\otimes_v \phi_v$. Let $g_0 = (g_{0,v})_v \in G(\A)$. Let $S$ be a set of places including the place at infinity such that for all $p \notin S$, $\pi_p$ and $\phi_p$ are unramified and $g_{0,p} \in K_p$. Then we have
 $$
   \frac{|W_\phi(g_0) |^2   }{\langle \phi, \phi\rangle} =  2^{-c}\frac{\zeta^{S} (2) \zeta^{(S)}(4) }{L^{S}(1,  \pi, \Ad)}  \prod_{v \in S}J_0(g_{0,v} \cdot \phi_v),
 $$
 where $J_0(g_{0,v} \cdot \phi_v)$ is defined in \eqref{e:normalizedJ0} for $v$ non-archimedean and as in \cite[Sect.~2.5]{LM15} for $v=\infty$, $L^{S}(1,  \pi, \Ad)$ denotes the adjoint (degree 10) $L$-function of $\pi$ with the factors in $S$ omitted and $$c = \begin{cases} 1 & \text{ if } \pi \text{ is of general type,} \\ 2 & \text{ if } \pi \text{ is endoscopic.}\end{cases}$$
 \end{conjecture}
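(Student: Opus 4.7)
The plan is to establish the identity by constructing a global Rankin-Selberg-style integral $I(\phi)$ that simultaneously represents the Petersson pairing $\langle \phi, \phi \rangle$ on one side and, after unfolding, factorizes into local integrals whose unramified computation produces $\zeta_v(2)\zeta_v(4)/L(1, \pi_v, \Ad)$ and whose ramified counterparts are precisely the (stabilized) Whittaker integrals $J_0(g_{0,v}\cdot \phi_v)$. First I would isolate the places in $S$ and show that the global integral splits as an Euler product after the unfolding, treating the potentially divergent archimedean and ramified contributions through the stable integral machinery of Lapid-Mao \cite{LM15}. Second, at each $v \notin S$ I would compute the local integral of the spherical matrix coefficient against $\psi_{U,v}^{-1}$ and match it against the ratio of zeta factors to the local adjoint $L$-factor; this is the computational heart and directly mirrors the $\GL_n$ calculation of Lapid-Mao.

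The main structural difficulty is that, unlike $\GL_n$, the group $\GSp_4$ admits no direct Rankin-Selberg integral for the adjoint $L$-function, so some indirect route is needed to produce $I(\phi)$. The most effective approach (and the one carried out by Furusawa-Morimoto \cite{FM22}) is to pass through the refined Gan-Gross-Prasad conjecture for $(\SO_5, \SO_2)$: one first establishes an explicit refined Bessel period formula involving $L(1/2, \pi \times \AI(\Lambda^{-1}))/L(1, \pi, \Ad)$, and then uses a limiting/degeneration procedure on the Bessel data, together with the Rallis inner product formula for the relevant theta correspondence between $\GSp_4$ and an orthogonal group, to convert the Bessel period identity into a Whittaker period identity. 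An alternative route would attempt a direct doubling pairing on $\GSp_4 \times \GSp_4$ in the spirit of the metaplectic treatment of \cite{LM17}, but the seesaw available through $\SO_5$ is considerably more tractable.

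The hard parts will be twofold. First, the regularization: one must verify that the stable integrals converge and can be interchanged with the global unfolding, which requires controlling the asymptotics of matrix coefficients along $U$ and checking that the ramified integrals $J_0(g_{0,v}\cdot \phi_v)$ depend only on the local data and not on the regularization chosen. Second, and more serious, the constant $2^{-c}$ with $c \in \{1,2\}$ is not visible from any integral representation alone; it arises from the multiplicity of $\pi$ in the discrete spectrum and from the structure of the global Arthur packet containing $\pi$. Pinning it down rigorously requires input from the endoscopic classification for $\GSp_4$, and this is precisely why the formula was only recently settled. Once the formula is established in this form, the local computations of Sect. \ref{s:localprelims} (notably Propositions \ref{J0-non-vanishing} and \ref{J0-prop}) can be substituted to yield the explicit identities needed for the applications in the rest of the paper.
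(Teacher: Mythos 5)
The statement you were asked to prove is labelled a \emph{Conjecture} in the paper, and the paper neither proves it nor attempts to. It is the Lapid--Mao conjecture \cite{LM15} specialized to $G=\GSp_4$, and immediately after stating it the paper records that Furusawa and Morimoto \cite[Theorem 6.3]{FM22} have proved it under the assumption that $\pi$ is tempered, and that temperedness is known when $\pi_\infty$ is a discrete series \cite[Corollary 8.1]{FM22}. For general $\pi$ the formula remains open. Accordingly, in Theorems \ref{t:lapidmaoexplicit} and \ref{t:largevalues} the paper explicitly \emph{assumes} Conjecture \ref{lapid-mao}, and Section~\ref{s:sup} closes by pointing out that the hypothesis holds unconditionally in the discrete-series case. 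There is therefore no ``paper's own proof'' to compare your attempt against.

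Taken on its own terms, your proposal is a research-program outline rather than a proof. Its broad contours are consistent with what is publicly known about the Furusawa--Morimoto work: their Lapid--Mao result does come as part of their resolution of the refined Gan--Gross--Prasad (Bessel period) conjecture for $(\SO_5,\SO_2)$; theta correspondences and the Rallis inner product formula do enter; and the constant $2^{-c}$ is indeed governed by the Arthur packet of $\pi$, so the endoscopic classification for $\GSp_4$ is a genuine prerequisite. But essentially every hard step in your sketch is asserted rather than carried out: the ``limiting/degeneration procedure'' from Bessel to Whittaker data is not described, the existence and unfolding of the global integral $I(\phi)$ is hypothesized without a construction, the convergence and interchange of stable integrals with the global unfolding is stated as a difficulty rather than resolved, and the treatment of the non-tempered case (which is what keeps the statement a conjecture) is not addressed. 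The correct move here was to recognize the statement as a cited conjecture with an external proof in the tempered case, and to defer to \cite{FM22} rather than attempt to reprove it.
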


Recently, Furusawa and Morimoto \cite[Theorem 6.3]{FM22} have proved the above conjecture assuming that $\pi$ is tempered. They have also proved \cite[Corollary 8.1]{FM22} that $\pi$ is tempered whenever $\pi_\infty$ is a discrete series representation. In particular, Conjecture \ref{lapid-mao} is now known for all $\pi$ which have the property that $\pi_\infty$ is a discrete series representation.

For applications, one often needs a more explicit version of Conjecture \ref{lapid-mao}, which requires us to compute or quantify the quantities $J_0(g_{0,v} \cdot \phi_v)$ for $v \in S$. Chen and Ichino \cite{chen-ichino} proved the Lapid--Mao conjecture in such an explicit form under the following assumptions: $g_0=1$, $\pi$ is of squarefree conductor, and $\phi$ is the paramodular newvector at finite places and the vector of minimal weight at infinity. It is noteworthy that they did not compute $J_0(\phi_v)$ directly, but instead reduced to the endoscopic case and used the Rallis inner product formula.

Our next result assumes Conjecture \ref{lapid-mao} and gives an explicit formula for $\frac{|W_\phi(g_0) |^2   }{\langle \phi, \phi\rangle}$ in new cases.

\begin{theorem}\label{t:lapidmaoexplicit}
 Let $S_1$, $S_2$ and $S_3$ be disjoint, finite (possibly empty) sets of non-archimede\-an places of $\Q$. Let $S=S_1 \cup S_2 \cup S_3 \cup\{\infty\}$. Let $\pi\simeq\otimes_v \pi_v$ be an irreducible, unitary, cuspidal, generic automorphic representation of $G(\A)$ with trivial central character and let $\phi$ be a cusp form in the space of $\pi$ corresponding to $\otimes_v \phi_v$. We assume that $\pi$ and $\phi$ satisfy the following conditions.
 \begin{itemize}
  \item For each $p \in S_1$, $\pi_p$ is a simple supercuspidal representation and $\phi_p$ is a minimal vector in the space of $\pi_p$.
  \item For each $p \in S_2$, $\pi_p$ is a simple supercuspidal representation and $\phi_p$ is a local (paramodular) newvector in the space of $\pi_p$.
  \item     For each $p \in S_3$, $a(\pi_p)=1$ and $\phi_p$ is a local (paramodular) newvector in the space of~$\pi_p$.
  \item The representation $\pi_\infty$ is one of the following types:
   \begin{enumerate}
    \item (Large discrete series)  $\pi_\infty|\Sp_4(\R) = D_{(\lambda_1, \lambda_2)} \oplus D_{(-\lambda_2, -\lambda_1)}$ where $D_{(\lambda_1, \lambda_2)}$ is the (limit of) discrete series representation of $\Sp_4(\R)$ with Blattner parameter\linebreak ${(\lambda_1, \lambda_2) \in \Z^2}$ such that $1-\lambda_1 \le \lambda_2 \le 0$. In this case, $\phi_\infty$ is a lowest weight vector in the minimal $K_\infty$-type of $D_{(-\lambda_2, -\lambda_1)}$.
    \item (Principal series) $\pi_\infty|\Sp_4(\R) = \Ind_{B(\R) \cap \Sp_4(\R)}^{\Sp_4(\R)} (|\cdot|^\lambda_1 \boxtimes |\cdot|^\lambda_2)$ for some $\lambda_1, \lambda_2 \in \C$. In this case, $\phi_\infty$ is a $K_\infty$-fixed vector in the space of $\pi_\infty$.
   \end{enumerate}
  \item For $p \notin S$, the representation $\pi_p$ is unramified, and $\phi_p$ is the unique, up to scalars, spherical vector in $\pi_p$.
 \end{itemize}
 Let $g_0=\prod_{p \in S_1} g_{0,p}$ such that for each $p\in S_1$, $g_{0,p} = \begin{bsmallmatrix}\alpha_p^2\beta_p\\&\alpha_p\beta_p\\&&\alpha_p\\&&&1\end{bsmallmatrix}$ with $\alpha_p \in -p^{-1} + \Z_p$, $\beta_p \in -p^{-1} + \Z_p$. Assume Conjecture \ref{lapid-mao}.
 Then
 $$
  \frac{|W_\phi(g_0) |^2   }{\langle \phi, \phi\rangle} =  2^{-c}\frac{\zeta^{S} (2) \zeta^{(S)}(4) }{L^{S}(1,  \pi, \Ad)}  \left(\prod_{p \in S_1}p^7 \right)\left(\prod_{p \in S_2}p^5 \zeta_p(2) \right)\left(\prod_{p \in S_3}\frac{p \ \zeta_p(2)^2}{L(1, \pi_p, \Ad)} \right)J_\infty,
 $$
 where $c$ is as in Conjecture \ref{lapid-mao}, and
 $$
  J_\infty = \frac{|W_\infty(1)|^2}{L(1, \pi_\infty, \Ad)}\cdot
   \begin{cases}
    2^{\lambda_2 - \lambda_1-5}\pi^{\lambda_2 - 3 \lambda_1 - 8} (1+\lambda_1 - \lambda_2) &\text{\!if $\pi_\infty$ is in the large discrete series}, \\
    2^4 \pi^{-3}    &\text{\!if $\pi_\infty$ is in the principal series},
   \end{cases}
 $$
 with the function $W_\infty(1)$ defined as in Sect.~1.1 of \cite{chen-ichino}.
\end{theorem}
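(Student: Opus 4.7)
The plan is to apply Conjecture \ref{lapid-mao} directly (which is now a theorem when $\pi_\infty$ is a discrete series by \cite{FM22}) and then evaluate each local factor $J_0(g_{0,v}\cdot\phi_v)$ for $v\in S$ using the computations already done in earlier sections of the paper together with results from \cite{chen-ichino}. Since $g_{0,p}\in K_p$ for $p\notin S_1$, we have $g_{0,v}\cdot\phi_v=\phi_v$ away from $S_1$, so only the diagonal translations at places in $S_1$ play a role.

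For each $p\in S_1$, observe that the global Whittaker character $\psi_U$ of the excerpt localizes at $p$ to the local character $\psi_{c_1,c_2}$ with $c_1=c_2=-1$, because $\psi_U$ equals $\psi(-a-e)$ on unipotent matrices and the global $\psi$ is unramified at finite primes. The hypotheses $\alpha_p\in -p^{-1}+\Z_p$, $\beta_p\in -p^{-1}+\Z_p$ therefore translate exactly into the condition $\alpha_p\in\varpi^{-1}c_1^{-1}(1+\p)$, $\beta_p\in\varpi^{-1}c_2^{-1}(1+\p)$ of Proposition~\ref{J0-non-vanishing}, which yields $J_0(g_{0,p}\cdot\phi_p)=p^7$. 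For each $p\in S_2$, Proposition~\ref{J0-prop} gives $J_0(\phi_p)=p^5(1-p^{-2})^{-2}$. For each $p\in S_3$, the required value of $J_0(\phi_p)$ on a paramodular newvector of conductor exponent~$1$ was computed by Chen and Ichino in \cite{chen-ichino}, and yields $p\,\zeta_p(2)^2/L(1,\pi_p,\Ad)$. At the archimedean place, \cite{chen-ichino} also gives $J_0(\phi_\infty)$ explicitly for both the large discrete series case (with $\phi_\infty$ a lowest-weight vector in the minimal $K_\infty$-type) and the spherical principal series case; these evaluations account for the displayed formula for $J_\infty$.

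With all local factors in hand, the proof reduces to substituting them into Conjecture~\ref{lapid-mao} and bookkeeping how each local $\zeta$- and $L$-factor gets absorbed into the global product $\zeta^S(2)\zeta^{(S)}(4)/L^S(1,\pi,\Ad)$ appearing on the right-hand side; in particular, the simple supercuspidals at $p\in S_1\cup S_2$ have trivial local adjoint $L$-factor at $s=1$, so no such factor needs to be removed from $L^S(1,\pi,\Ad)$ there. The main substantive inputs are the local integral identities in Propositions~\ref{J0-non-vanishing} and \ref{J0-prop}, both of which were already proved above; everything else is direct substitution. The only genuine obstacle would be if Conjecture~\ref{lapid-mao} itself were unavailable, but we assume it, and in the discrete-series case it is unconditional by \cite[Cor.~8.1]{FM22}.
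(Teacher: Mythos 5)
Your proposal is correct and follows essentially the same route as the paper's own proof: invoke Conjecture~\ref{lapid-mao}, note that $g_{0,v}=1$ off $S_1$, and then substitute the local evaluations from Propositions~\ref{J0-non-vanishing} and~\ref{J0-prop} (for $S_1$, $S_2$) together with the Chen--Ichino computations (for $S_3$ and $\infty$). Your added observation that $\psi_U$ localizes to $\psi_{-1,-1}$, so the hypotheses $\alpha_p,\beta_p\in -p^{-1}+\Z_p$ are exactly the non-vanishing conditions of Proposition~\ref{J0-non-vanishing}, is a helpful explicit check that the paper leaves implicit but is not a different method.
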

\begin{proof}
Since we are assuming Conjecture \ref{lapid-mao} we have $$\frac{|W_\phi(g_0) |^2   }{\langle \phi, \phi\rangle} =  2^{-c}\frac{\zeta^{S} (2) \zeta^{(S)}(4) }{L^{S}(1,  \pi, \Ad)}  \prod_{v \in S}J_0(g_{0,v} \cdot \phi_v).$$ Note that $g_{0,v} =1$ if $v \notin S_1$. Comparing Conjecture \ref{lapid-mao} and the main results of \cite{chen-ichino}  we see that
\begin{enumerate}
\item For each $p$ such that $a(\pi_p)=1$ and $\phi_p$ is a local newvector, we have $J_0(\phi_p) = \frac{p \ \zeta_p(2)^2}{L(1, \pi_p, \Ad)}$.

\item If $\pi_\infty$ is a large discrete series or principal series representation with parameters as in the theorem, and $\phi_\infty$ is a vector in $\pi_\infty$ as in the theorem, then $J_0(\phi_\infty) = J_\infty$.
\end{enumerate}
For $p \in S_1$ or $p \in S_2$, $J_0(g_{0,p} \cdot \phi_p)$ was computed in Propositions \ref{J0-non-vanishing} and \ref{J0-prop} respectively. Putting everything together, we obtain the desired result.
\end{proof}

\subsection{An application to lower bounds for sup-norms of newforms}\label{s:sup}

The results of the previous subsection allow us to provide a lower bound for sup-norms of global newforms with respect to the paramodular group. In fact, we show that newforms take ``large values" in the compact set $U(\Q)\bs U(\A)$.

More precisely, for each positive integer $N$, we define the compact subset $\F_N \subset U(\R)$ as follows:
$$
 \F_N = \left\{\begin{bsmallmatrix}1&a\\&1\\&&1&-a\\&&&1\end{bsmallmatrix}
 \begin{bsmallmatrix}1&&x&y\\&1&z&x\\&&1&\\&&&1\end{bsmallmatrix}
\in U(\R): 0 \le a,x,z \le 1, \  0 \le y \le 1/N \right\}.
$$
We have the following lemma.
\begin{lemma}Let $N= \prod_{p<\infty}p^{n_p}$ be a positive integer and let $u \in U(\A)$. Then we can write $u= u_\Q u_\R \prod_{p<\infty}u_p$ with $u_\Q  \in U(\Q)$, $u_\R \in \F_N$, $u_p \in K(p^{n_p})\cap U(\Q_p)$, where $K(p^{n_p}) \subset G(\Q_p)$ is the paramodular subgroup as defined in \eqref{paradefeq}.
\end{lemma}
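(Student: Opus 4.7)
My plan is to find $u_\Q := \delta = u_1(a_\Q)u_2(b_\Q, e_\Q, c_\Q) \in U(\Q)$ such that $\delta^{-1}u$ already lies in $\F_N \cdot \prod_p K_p$, where $K_p := K(p^{n_p}) \cap U(\Q_p)$; one can then read off $u_\R$ and the $u_p$'s as the archimedean and $p$-adic components of $\delta^{-1}u$, respectively. Here $u_1(a)$ and $u_2(b,e,c)$ denote the two factors in the parametrization of $U$ given in Sect.~\ref{notations-sect}. The non-abelianness of $U$ prevents us from choosing the four scalars $a_\Q, e_\Q, b_\Q, c_\Q$ independently, but as explained below they can be chosen sequentially, using at each stage strong approximation for $\mathbb{G}_a$.

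The engine of the argument is the commutation identity
\[
 u_2(b',e',c')\,u_1(a) \;=\; u_1(a)\,u_2\bigl(b' - a e',\ e',\ c' - 2 a b' + a^2 e'\bigr),
\]
obtained by a direct matrix multiplication from the definition of $U$. Writing $u = u_1(a)u_2(b, e, c)$ with $a, b, c, e \in \A$, this yields
\[
 \delta^{-1} u \;=\; u_1(a - a_\Q)\,u_2\bigl(b - b_\Q + (a-a_\Q)e_\Q,\ e - e_\Q,\ c - c_\Q + 2(a-a_\Q)b_\Q - (a-a_\Q)^2 e_\Q\bigr),
\]
which is \emph{triangular} in the parameters: the $a$- and $e$-components depend only on $a_\Q, e_\Q$; the $b$-component is linear in $b_\Q$ with offset fixed by $a_\Q, e_\Q$; the $c$-component is linear in $c_\Q$ with offset fixed by $a_\Q, e_\Q, b_\Q$.

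Accordingly, I first choose $a_\Q \in \Q$ so that $(a-a_\Q)_p \in \Z_p$ at every finite $p$ (strong approximation for $\mathbb{G}_a$) and, after shifting by an integer, also $(a-a_\Q)_\infty \in [0,1]$. The parameter $e_\Q \in \Q$ is chosen analogously. With $a_\Q, e_\Q$ then fixed, I choose $b_\Q \in \Q$ so that $(b - b_\Q + (a - a_\Q)e_\Q)_p \in \Z_p$ at every finite $p$ (the adele being approximated lies in $\Z_p$ for all but finitely many $p$, so strong approximation applies), and after an integer shift also has archimedean part in $[0,1]$. Finally, I choose $c_\Q \in \Q$ similarly but with $p^{-n_p}\Z_p$ in place of $\Z_p$, and shift by an element of $N^{-1}\Z = \Q \cap \prod_p p^{-n_p}\Z_p$ --- which has gap $1/N$ in $\R$ --- to place the $c$-component into $[0, 1/N]$. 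The only real subtlety is that the cross-term offsets $(a-a_\Q)e_\Q$ and $2(a-a_\Q)b_\Q - (a-a_\Q)^2 e_\Q$ are only determined once the previous parameters have been selected, so the four stages must be executed in the order above; but none of them involves more than one-dimensional strong approximation for the additive group, so the construction presents no conceptual difficulty.
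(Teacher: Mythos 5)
Your proof is correct and takes a genuinely different route from the paper. The paper's argument has three steps: (i) invoke strong approximation for the full unipotent group $U$ to reduce to a representative $u_1 \in U(\R)$; (ii) move $u_1$ into a fundamental domain for $U(\Z)$ acting on $U(\R)$ so that the four coordinates lie in $[0,1]$; (iii) refine the top-right entry using the central element $u_2(0,0,m/N)$, whose finite part lies in $\prod_p K(p^{n_p})\cap U(\Q_p)$. You instead avoid strong approximation for the non-abelian group $U$ entirely: you parametrize $U$ explicitly, derive the exact conjugation formula $u_2(b',e',c')u_1(a)=u_1(a)u_2(b'-ae',e',c'-2ab'+a^2e')$, observe the resulting triangular dependence of the four adelic coordinates on the four rational parameters $a_\Q,e_\Q,b_\Q,c_\Q$, and then carry out four successive one-dimensional approximations, each amounting to the elementary fact $\A = \Q + (\R \times \hat{\Z})$ plus an integer (or $N^{-1}\Z$) shift to control the archimedean slot. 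The payoff of your approach is that it is fully self-contained --- it needs no black-box about unipotent groups and makes completely explicit the sequential choice that is hidden in the paper's phrase ``a suitable element of $U(\Z)$'' --- and it folds the paper's steps (ii) and (iii) into a single uniform scheme by treating the $c$-coordinate with the lattice $N^{-1}\Z$ instead of $\Z$. The paper's version is shorter on the page but rests on a higher-powered input; yours is longer but elementary throughout. Both are valid.
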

\begin{proof} For elements $A, B \in U(\A)$, write $A \sim B$ if there exist $u_\Q  \in U(\Q)$ and $u_\f \in  \prod_{p<\infty}(K(p^{n_p})\cap U(\Q_p))$ such that $A = u_\Q B u_\f$. Let $u \in U(\A)$ be as in the statement. Then strong approximation for the unipotent group $U$ implies that $u \sim u_1$ for some $u_1 \in U(\R)$. We multiply $u_1$ on the left by a suitable element of $U(\Z)$ and on the right by the inverse of the finite part of the same element to conclude that $u_1 \sim u_2$ where $u_2 \in U(\R)$ is equal to $\begin{bsmallmatrix}1&a\\&1\\&&1&-a\\&&&1\end{bsmallmatrix}
 \begin{bsmallmatrix}1&&x&y\\&1&z&x\\&&1&\\&&&1\end{bsmallmatrix}$ for some $0 \le a,x,y,z \le 1$.
Finally, let $m \in \Z$ such that $0 \le y + \frac{m}{N} \le  \frac{1}{N}$. Put $k = \begin{bsmallmatrix}1&&&\frac{m}{N}\\&1&&\\&&1&\\&&&1\end{bsmallmatrix} \in U(\Q)$ and let $k_\f$ (resp.~$k_\infty$) be its image in $\prod_{p<\infty}U(\Q_p)$. Since $k_\f \in \prod_{p<\infty}(K(p^{n_p})\cap U(\Q_p))$, it follows that  $u\sim u_\R:=k_\infty u_2$ and we check that $u_\R$ has the required properties.
\end{proof}

The significance of the above proposition is that for any bounded automorphic form $\phi$ on $G(\Q)\bs G(\A)$ that is right invariant by $\prod_{p<\infty}K(p^{n_p})$, we have  $$\sup_{g \in U(\Q) \bs U(\A)}\frac{|\phi(g)|}{\langle \phi, \phi \rangle^{1/2}} =  \sup_{g_\infty \in \F_N}\frac{|\phi(g_\infty)|}{\langle \phi, \phi \rangle^{1/2}}.$$

\begin{theorem}\label{t:largevalues}
 Let $\pi\simeq\otimes_v \pi_v$ be an irreducible, unitary, cuspidal, generic automorphic representation of $G(\A)$ with trivial central character and conductor $N = \prod p^{a(\pi_p)}$. Let $\phi$ in the space of $\pi$, corresponding to $\otimes_v \phi_v$, be such that $\phi_p$ is a newvector with respect to the paramodular subgroup $K(p^{a(\pi_p)})$ at each prime $p$. Assume that at each prime $p|N$, $\pi_p$ is either a simple supercuspidal representation (so that $a(\pi_p)=5$), or is a representation satisfying $a(\pi_p)=1$ (so that $\pi_p$ is of type IIa in the notation of \cite{NF}). Assume also that Conjecture \ref{lapid-mao} is true and that $\pi_\infty$ and $\phi_\infty$ are among the types covered by Theorem \ref{t:lapidmaoexplicit}. Then
 $$
  \sup_{g \in U(\Q) \bs U(\A)}\frac{|\phi(g)|}{\langle \phi, \phi \rangle^{1/2}} =  \sup_{g_\infty \in \F_N}\frac{|\phi(g_\infty)|}{\langle \phi, \phi \rangle^{1/2}} \gg_{\pi_\infty, \eps} N^{1/2 - \eps}
 $$
\end{theorem}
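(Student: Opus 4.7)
The plan is to bound the sup-norm of $\phi$ from below by its Whittaker coefficient $|W_\phi(1)|$, and then invoke the explicit Lapid--Mao formula of Theorem~\ref{t:lapidmaoexplicit} to show that $|W_\phi(1)|^2/\langle\phi,\phi\rangle$ is of size $N^{1-\eps}$.

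For the first step, since $U(\Q)\bs U(\A)$ has volume~$1$ and the character $\psi_U$ is unitary, the triangle inequality applied to the defining integral \eqref{global-whittaker-defn} yields
\[
 |W_\phi(1)| \le \int_{U(\Q)\bs U(\A)}|\phi(u)|\,du \le \sup_{u \in U(\Q)\bs U(\A)} |\phi(u)|.
\]
Combined with the Lemma preceding the theorem, which identifies $\sup_{U(\Q)\bs U(\A)}|\phi|$ with $\sup_{\F_N}|\phi|$, it suffices to show that $|W_\phi(1)|^2/\langle\phi,\phi\rangle \gg_{\pi_\infty,\eps} N^{1-\eps}$.

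For this, I would apply Theorem~\ref{t:lapidmaoexplicit} with $g_0=1$, $S_1=\emptyset$, $S_2=\{p:a(\pi_p)=5\}$, and $S_3=\{p:a(\pi_p)=1\}$. All hypotheses hold: $\phi$ is a paramodular newvector at every ramified prime, $\pi_\infty$ is of one of the admissible types, and Conjecture~\ref{lapid-mao} is assumed. Since $N = \prod_{p\in S_2}p^5\prod_{p\in S_3}p$, and since each of the auxiliary factors $\zeta^S(2)$, $\zeta^{(S)}(4)$, $\zeta_p(2)$, $\zeta_p(2)^2$, and $L(1,\pi_p,\Ad)^{\pm1}$ (for $p\in S_3$ of type IIa, which under temperedness has bounded Satake parameters) is bounded uniformly in $N$ in terms of $\pi_\infty$, the resulting formula simplifies to
\[
 \frac{|W_\phi(1)|^2}{\langle\phi,\phi\rangle} \asymp_{\pi_\infty} \frac{N}{L^S(1,\pi,\Ad)}.
\]
Note in particular that this quantity is positive, as each local Whittaker integral $J_0(\phi_v)$ is nonzero (by Chen--Ichino at $\infty$ and $p\in S_3$, and by Proposition~\ref{J0-prop} at $p\in S_2$), so that $W_\phi(1)\neq 0$.

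The final ingredient is the upper bound $L^S(1,\pi,\Ad)\ll_{\pi_\infty,\eps} N^\eps$, which is the main technical point of the proof. This follows from standard convexity-type bounds on $L$-values at the edge of the critical strip --- for instance, via the factorization of the Rankin--Selberg $L$-function $L(s,\pi\times\pi^\vee)$ as $\zeta(s)$ times the adjoint, together with bounds on residues of Rankin--Selberg $L$-functions in the style of Li and Brumley (the analytic conductor of $\pi$ is polynomial in $N$). Substituting into the previous display gives $|W_\phi(1)|^2/\langle\phi,\phi\rangle\gg_{\pi_\infty,\eps}N^{1-\eps}$, which completes the proof. Aside from this $L$-value bound, the argument is essentially a packaging of the local computations developed in Sect.~\ref{s:localprelims}.
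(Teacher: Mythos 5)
Your proposal is correct and follows essentially the same route as the paper's proof: lower-bound $\sup_{U(\Q)\bs U(\A)}|\phi|$ by $|W_\phi(1)|$ using that $U(\Q)\bs U(\A)$ has volume $1$, apply Theorem~\ref{t:lapidmaoexplicit} with $S_1=\emptyset$, $S_2$ the supercuspidal primes, $S_3$ the conductor-one primes, and bound $L^S(1,\pi,\Ad)=L^S(1,\Pi,\mathrm{Sym}^2)$ by $N^\eps$ via the $\GL_4$ transfer and Li's theorem. The only small difference is that you spell out the uniform boundedness of the auxiliary local factors at $p\in S_3$ (in particular $L(1,\pi_p,\Ad)^{\pm 1}$ for tempered type~IIa), a point the paper's proof leaves implicit when it writes ``the result follows.''
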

\begin{proof}
Using the definition \eqref{global-whittaker-defn} of $W_\phi(g)$ we get
\begin{align*}
 \frac{\sup_{g\in G(\A)}|\phi(g)|}{\langle \phi, \phi \rangle^{1/2}} & \ge \vol(U(\Q) \bs U(\A))^{-1}\frac{|W_{\phi}(1)|}{\langle \phi, \phi \rangle^{1/2}} \\
 &\asymp  \left(\frac{1 }{L^{S}(1,  \pi, \Ad)} \right)^{1/2} \left(\prod_{p \in S_2}p^5 \right)^{1/2}\left(\prod_{p \in S_3}\frac{p \ }{L(1, \pi_p, \Ad)} \right)^{1/2}J_\infty^{1/2},
\end{align*}
where in the last step we have used Theorem~\ref{t:lapidmaoexplicit}.

We know that $\pi$ lifts to a unitary automorphic representation $\Pi$ of $\GL_4(\A)$ which is either cuspidal or an isobaric sum of two cuspidal representations of $\GL_2(\A)$; we have $L(s, \pi, \Ad) = L(s, \Pi, \rm{Sym}^2)$. Using the fact that $L(s, \Pi, \rm{Sym}^2)$ has no pole at $s=1$, it follows from the main result of \cite{Li} that $L^S(1, \pi, \Ad) \ll_{\eps, \pi_\infty} N^\eps$. Clearly, $J_\infty \gg_{\pi_\infty} 1$. The result follows.
\end{proof}

We remind the reader that if $\pi_\infty$ is in the discrete series, Conjecture 4.1 is known by recent work of Furusawa and Morimoto \cite[Theorem 6.3 and Corollary 8.1]{FM22}.

\subsection{Explicit global Novodvorsky integral}
Let $\pi\simeq\otimes_v \pi_v$ be an irreducible, unitary, cuspidal, globally generic automorphic representation of $G(\A)$ with trivial central character. Let $\chi = \otimes_v \chi_v$ be a unitary Hecke character of $\Q^\times \bs \A^\times$. Let $\psi$ be the character of $\Q \bs \A$ given in Sect.~\ref{notations-sect}. For $\phi\in\pi$, corresponding to $\otimes_v \phi_v$, define the global Novodvorsky integral by
\begin{equation}\label{global-novo-eqn}
Z(s, \phi, \chi) := \int\limits_{\Q^\times \bs \A^\times} \int\limits_{(\Q \bs \A)^3} \phi(\begin{bsmallmatrix}1&x_2&&x_4\\&1\\&z&1&-x_2\\&&&1\end{bsmallmatrix} \begin{bsmallmatrix}y\\&y\\&&1\\&&&1\end{bsmallmatrix})\chi(y)\psi(x_2)|y|^{s-\frac 12}\,dz\,dx_2\,dx_4\,d^\times y.
\end{equation}
Let $W_\phi$ be the global Whittaker function corresponding to $\phi$ as given in (\ref{global-whittaker-defn}). Then, a standard unfolding process (see Chapter 3 of \cite{Bump87}) gives
$$
 Z(s, \phi, \chi) = \int\limits_{\A^\times} \int\limits_\A W_\phi(\begin{bsmallmatrix}y\\&y\\&x&1\\&&&1\end{bsmallmatrix}) \chi(y) |y|^{s-\frac 32}\,dx\,d^\times y.
$$
Let  $W_v \in \mathcal{W}(\pi_v, (\psi_v)_{-1, -1})$ correspond to $\phi_v$ in the local Whittaker model. Then by uniqueness of Whittaker functionals we have the basic identity (see \cite{Bump87})
$$\frac{Z(s, \phi, \chi)}{W_\phi(g)} = \prod\limits_v \frac{Z_v(s, W_v, \chi_v)}{W_v(g_v)},$$
where $Z_v(s, W_v, \chi_v)$ is defined in (\ref{zeta-int-defn}) and $g = (g_v) \in G(\A)$ is any element such that ${W_\phi(g) \neq 0}$. We have the following theorem.
\begin{theorem}\label{global-novo-thm}
 Let $\pi\simeq\otimes_v \pi_v$ be an irreducible, unitary, cuspidal, globally generic automorphic representation of $G(\A)$ with trivial central character. Let $\chi = \otimes_v \chi_v$ be a unitary Hecke character on $\Q^\times \bs \A^\times$. Let $\phi\in\pi$ correspond to $\otimes_v \phi_v$. Let $S$ be a finite (possibly empty) set of prime numbers. We assume that $\pi$, $\chi$ and $\phi$ satisfy the following conditions.
\begin{itemize}
\item For each $p \in S$,  $a(\chi_p) \le 1$, $\pi_p$ is a simple supercuspidal representation of $G(\Q_p)$,  and $\phi_p$ is the translate of the minimal vector of $\pi_p$ by ${\rm diag}(-p^{-3}, p^{-2}, -p^{-1},1)$.
\item We have $\chi_\infty$ equal to the trivial character, and  $\pi_\infty|_{\Sp_4(\R)} = D_{(\lambda_1, \lambda_2)} \oplus D_{(-\lambda_2, -\lambda_1)}$ where $D_{(\lambda_1, \lambda_2)}$ is the (limit of) discrete series representation of $\Sp_4(\R)$ with Blattner parameter $(\lambda_1, \lambda_2) \in \Z^2$ such that $1-\lambda_1 \le \lambda_2 \le 0$. The vector $\phi_\infty$ is a lowest weight vector in the minimal $K_\infty$-type of $D_{(-\lambda_2, -\lambda_1)}$.

\item For $p \notin S$, the representation $\pi_p$ and the character $\chi_p$  are unramified, and $\phi_p$ is the unique (up to scalar multiples) spherical vector in $\pi_p$.

\end{itemize}
 Then we have
$$\frac{Z(s, \phi, \chi)}{W_\phi(1)} = L(s, \pi \times \chi) \frac{Z_\infty(s, W_\infty)}{L(s, \pi_\infty) W_\infty(1)}  \prod\limits_{p \in S} \frac{1}{(1-p^{-1})p^{3}},$$
where $W_\infty(1)$ and $Z_\infty(s, W_\infty)$ is as given in Proposition 7.1 ii) and Proposition 8 of \cite{Mor04}.
\end{theorem}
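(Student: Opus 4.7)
The plan is to combine the unfolding of the global Novodvorsky integral with the factorization of local Whittaker functions. Using the unfolding identity already displayed just before the theorem, one has
\[
Z(s,\phi,\chi) = \int\limits_{\A^\times}\int\limits_\A W_\phi\Big(\begin{bsmallmatrix}y\\&y\\&x&1\\&&&1\end{bsmallmatrix}\Big)\chi(y)|y|^{s-3/2}\,dx\,d^\times y.
\]
Fixing a local Whittaker embedding $\pi_v \hookrightarrow \mathcal{W}(\pi_v, \psi_{U,v})$ at each place yields a constant $C$ with $W_\phi = C\prod_v W_v$, hence $Z(s,\phi,\chi) = C\prod_v Z_v(s, W_v, \chi_v)$ and $W_\phi(1) = C\prod_v W_v(1)$; the constant cancels to give
\[
\frac{Z(s,\phi,\chi)}{W_\phi(1)} = \prod_v \frac{Z_v(s, W_v, \chi_v)}{W_v(1)}.
\]
It therefore suffices to evaluate each local ratio.

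At each finite prime $p\notin S$, everything is unramified and \eqref{e:localspinunram} with the standard spherical normalization $W_p(1)=1$ gives $Z_p(s, W_p, \chi_p) = L(s, \pi_p\times\chi_p)$. At the archimedean place, Proposition 7.1(ii) and Proposition 8 of \cite{Mor04} identify $Z_\infty(s,W_\infty)/W_\infty(1)$ as the correct archimedean factor. For a prime $p\in S$, the chosen vector is $\phi_p = d_{-p^{-1},-p^{-1}}\cdot f^\pm_{\rm min}$; since the global character $\psi_U$ restricts to the local character $\psi_{c_1,c_2}$ with $c_1=c_2=-1$ at $p$, we have $\varpi^{-1}c_i^{-1} = -p^{-1}$, and the non-vanishing conditions in both Corollary \ref{Whittaker-model-defn-cor} and Proposition \ref{zeta-int-value-prop} are satisfied at $\alpha=\beta=-p^{-1}$. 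Corollary \ref{Whittaker-model-defn-cor} gives $W_p(1)=p^7$, while a direct substitution in Proposition \ref{zeta-int-value-prop}, using $|\beta|^{1/2-s}=p^{1/2-s}$ and $\chi_p(\beta\varpi c_2)^{-1}=\chi_p(1)^{-1}=1$, yields
\[
Z_p(s, W_p, \chi_p) = (1-p^{-1})^{-1}p^{s+7/2}\cdot p^{1/2-s} = \frac{p^4}{1-p^{-1}},
\]
so that $Z_p(s, W_p, \chi_p)/W_p(1) = \frac{1}{(1-p^{-1})p^3}$.

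Assembling the local factors produces
\[
\frac{Z(s,\phi,\chi)}{W_\phi(1)} = \frac{Z_\infty(s, W_\infty)}{W_\infty(1)}\prod_{p\notin S,\,p\text{ finite}}L(s,\pi_p\times\chi_p)\prod_{p\in S}\frac{1}{(1-p^{-1})p^3},
\]
which matches the asserted identity upon multiplying and dividing by $L(s,\pi_\infty)$, provided $L(s,\pi_p\times\chi_p)=1$ at every $p\in S$. This last point is the main subtlety; it follows from the $s$-independence of the local zeta integral computed above together with the characterization of the local spin $L$-factor as the smallest ``denominator" among local zeta integrals as the Whittaker vector varies (alternatively, from the irreducibility of the four-dimensional Langlands parameter of a simple supercuspidal, which persists under twisting by characters of conductor at most $1$).
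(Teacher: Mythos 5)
Your proposal follows the same overall architecture as the paper's proof: unfold the global Novodvorsky integral via the basic identity stated before the theorem, reduce to a product of local ratios $Z_v(s,W_v,\chi_v)/W_v(1)$, evaluate the unramified places via \eqref{e:localspinunram}, the archimedean place via \cite{Mor04}, and the places $p \in S$ via Corollary~\ref{Whittaker-model-defn-cor} and Proposition~\ref{zeta-int-value-prop}. Your identification of the translate $d_{-p^{-1},-p^{-1}}f^\pm_{\rm min}$ (with $c_1=c_2=-1$ coming from $\psi_U$), the resulting $W_p(1)=p^7$, the simplification $\chi_p(\beta\varpi c_2)^{-1}=\chi_p(1)^{-1}=1$, and the final ratio $\frac{1}{(1-p^{-1})p^3}$ are all correct.

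The issue is in your final paragraph, justifying $L(s,\pi_p\times\chi_p)=1$ for $p\in S$. Your first argument does not work as stated: knowing that \emph{one} zeta integral $Z(s,W_p,\chi_p)$ is a nonzero constant is not enough to conclude that the GCD of all zeta integrals is trivial, because any nonzero constant automatically lies in the fractional ideal $L(s,\pi_p\times\chi_p)\cdot\C[q^s,q^{-s}]$ regardless of what $L$ is. What one actually needs is that \emph{every} local zeta integral is a Laurent polynomial in $q^{-s}$ (which holds for supercuspidal $\pi_p$ because its Whittaker functions are compactly supported modulo the center on the relevant subset of the torus); only then does the fractional ideal sit inside $\C[q^s,q^{-s}]$ and the normalization force $L=1$. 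Your alternative argument (irreducibility of the four-dimensional spin parameter of a simple supercuspidal, persisting under twists of conductor $\le 1$) would also suffice, but it is asserted without proof and is not an obvious fact. The paper sidesteps both of these by citing Proposition~3.9 of \cite{Ram-Tak}, which directly gives $L(s,\pi_p\times\chi_p)=1$ for supercuspidal $\pi_p$; that is the right reference and closes the gap cleanly.
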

\begin{proof}
By Proposition 3.9 of \cite{Ram-Tak}, we know that $L(s, \pi_p \times \chi_p) = 1$ for all $p \in S$. The result now follows from \eqref{e:localspinunram}, Corollary \ref{Whittaker-model-defn-cor} and Proposition \ref{zeta-int-value-prop}.
\end{proof}

Using Theorem \ref{t:lapidmaoexplicit}, we can write the above theorem in an equivalent form with the factor $W_\phi(1)$ replaced by the Petersson norm, which is more suited for analytic applications. We give a simplified version of this result in the next corollary.
\begin{corollary}\label{c:globalnov}
 Let $\pi$, $\phi$, $\chi$ and $S$ be as in Theorem \ref{global-novo-thm}. Then
$$\frac{|Z(s, \phi, \chi)|}{\langle \phi, \phi \rangle^{1/2}} = C_\infty(s) \frac{|L(s, \pi \times \chi)|}{\sqrt{L(1, \pi, \Ad)}} \  \left(\prod\limits_{p \in S}p^{1/2}\sqrt{ \frac{\zeta_p(1)^2 L(1, \pi_p, \Ad)}{\zeta_p(2)\zeta_p(4)}}\right),$$
where $C_\infty(s)$ depends only on $\pi_\infty$ and $s$.
\end{corollary}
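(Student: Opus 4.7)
The plan is to derive the corollary as a direct algebraic consequence of the two explicit global results already in hand: Theorem~\ref{global-novo-thm}, which expresses $Z(s,\phi,\chi)$ in terms of $W_\phi(1)$, and Theorem~\ref{t:lapidmaoexplicit}, which converts $|W_\phi(1)|$ into $\langle\phi,\phi\rangle^{1/2}$ up to explicit local factors.

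First I would take absolute values on both sides of the formula in Theorem~\ref{global-novo-thm} and divide through by $\langle\phi,\phi\rangle^{1/2}$, which produces
$$
\frac{|Z(s,\phi,\chi)|}{\langle\phi,\phi\rangle^{1/2}}
=\frac{|W_\phi(1)|}{\langle\phi,\phi\rangle^{1/2}}
\cdot |L(s,\pi\times\chi)|\cdot
\frac{|Z_\infty(s,W_\infty)|}{|L(s,\pi_\infty)|\,|W_\infty(1)|}
\cdot\prod_{p\in S}\frac{1}{(1-p^{-1})p^{3}}.
$$
The second step is to apply Theorem~\ref{t:lapidmaoexplicit} to the ratio $|W_\phi(1)|/\langle\phi,\phi\rangle^{1/2}$, taking $S_1=S$ and $S_2=S_3=\emptyset$. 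Here I would check that the input matches: writing $\phi$ as the action of $g_0=\prod_{p\in S} d_{-p^{-1},-p^{-1}}$ on the pure tensor whose $p$-component is the unshifted minimal vector, one has $d_{-p^{-1},-p^{-1}}=\mathrm{diag}(-p^{-3},p^{-2},-p^{-1},1)$, and the parameters $\alpha_p=\beta_p=-p^{-1}$ lie in $-p^{-1}+\Z_p$, so the hypotheses are met. Using $G(\A)$-invariance of the Petersson inner product and $W_\phi(1)=W_{\phi^{\min}}(g_0)$, Theorem~\ref{t:lapidmaoexplicit} yields
$$
\frac{|W_\phi(1)|^{2}}{\langle\phi,\phi\rangle}
= 2^{-c}\,\frac{\zeta^{S}(2)\zeta^{(S)}(4)}{L^{S}(1,\pi,\Ad)}
\Big(\prod_{p\in S} p^{7}\Big)\,J_\infty.
$$

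Finally I would take the square root, substitute back, and absorb all $S$-independent data (the archimedean Whittaker ratio $Z_\infty(s,W_\infty)/(L(s,\pi_\infty)W_\infty(1))$, the constant $2^{-c/2}$, the factor $J_\infty^{1/2}$, the archimedean adjoint $L$-value, and $\zeta(2)\zeta(4)$) into $C_\infty(s)$. Using $L^S(1,\pi,\Ad)=L(1,\pi,\Ad)/\bigl(L(1,\pi_\infty,\Ad)\prod_{p\in S} L(1,\pi_p,\Ad)\bigr)$ and the analogous identity for $\zeta$, the contribution at each $p\in S$ simplifies via $\zeta_p(1)=(1-p^{-1})^{-1}$ as
$$
\frac{1}{(1-p^{-1})p^{3}}\cdot \sqrt{\,p^{7}\,\frac{L(1,\pi_p,\Ad)}{\zeta_p(2)\zeta_p(4)}}
= p^{1/2}\sqrt{\frac{\zeta_p(1)^{2}\,L(1,\pi_p,\Ad)}{\zeta_p(2)\zeta_p(4)}},
$$
matching the product in the statement and leaving $|L(s,\pi\times\chi)|/\sqrt{L(1,\pi,\Ad)}$ as the remaining global factor.

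There is no genuine obstacle in this derivation; the proof is essentially bookkeeping of local Euler factors. The only item requiring attention is the verification that the diagonal shift prescribed in Theorem~\ref{global-novo-thm} falls into the admissible family of translates $d_{\alpha_p,\beta_p}$ in Theorem~\ref{t:lapidmaoexplicit}, which I addressed above.
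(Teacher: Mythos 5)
Your derivation is correct and is essentially the one the paper intends: you unwind Theorem~\ref{global-novo-thm} using Theorem~\ref{t:lapidmaoexplicit} with $S_1=S$, $S_2=S_3=\emptyset$, verify that the diagonal shift $\mathrm{diag}(-p^{-3},p^{-2},-p^{-1},1)=d_{-p^{-1},-p^{-1}}$ lies in the admissible family $\alpha_p,\beta_p\in -p^{-1}+\Z_p$, use $G(\A)$-invariance to move the shift from the vector into the Whittaker argument, and then do the bookkeeping on Euler factors and $\zeta_p(1)=(1-p^{-1})^{-1}$ exactly as required. The only caveat worth recording (a feature of the paper's statement, not of your argument) is that the constant you absorb into $C_\infty(s)$ includes $2^{-c/2}$, which depends on the Arthur type of $\pi$ rather than on $\pi_\infty$ alone.
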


\subsection{Explicit Gan-Gross-Prasad conjecture for \texorpdfstring{$(\SO(5), \SO(2))$}{}}
In this section, we will write down an explicit version of the Gan--Gross--Prasad conjecture for $(\SO(5), \SO(2))$ (which is now a theorem due to Furusawa and Morimoto) in new cases. Let $L$ be an imaginary quadratic extension of $\Q$ with discriminant $-D$, and let $S$ be defined by
$$
 S := \begin{cases} \mat{1}{}{}{D/4} & \text{ if } D \equiv 0 \pmod{4}, \\[2ex]
 \mat{1}{1/2}{1/2}{(1+D)/4} & \text{ if } D \equiv -1 \pmod{4}.\end{cases}
$$
Let $T_S := \{ g \in \GL_2 : {}^tg S g = \det(g) S\}$. We see that $T_S(\Q) \simeq L^\times$. Let $\Lambda$ be a character of $L^\times \bs \A_L^\times$ that is trivial on $\A^\times$, and consider it as a character on $T_S(\A)$. Embed $T_S$ in $G$ by
$$ T_S \ni g \mapsto \mat{g}{}{}{{\rm det}(g) g'} \text{ where } g' := \mat{}{1}{1}{} {}^tg^{-1} \mat{}{1}{1}{}.
$$
Let $N$ be the unipotent radical of the Siegel parabolic subgroup of $G$, and let $\theta_S$ be the character of $N(\A)$ given by
$$\theta_S(\mat{1}{X}{}{1}) := \psi({\rm tr}(SX\mat{}{1}{1}{})).$$
Here, $\psi$ is the character of $\Q \bs \A$ given in Sect.~\ref{notations-sect}. Now, let $\pi$ be an irreducible cuspidal automorphic representation of $\GSp_4(\A)$ with trivial central character. For any $\phi \in V_\pi$, define the global Bessel period by
\begin{equation}\label{defbessel}
  B(\phi, \Lambda) =
  \int\limits_{\A^\times T_S(\Q)\bs T_S(\A)}\;\int\limits_{N(\Q) \bs N(\A)}\phi(tn)\Lambda^{-1}(t) \theta_S^{-1}(n)\,dn\,dt
\end{equation}
where we use the Tamagawa measure. For each place $v$, fix a $G(\Q_v)$-invariant Hermitian inner product $\langle\, , \rangle_v$ on $\pi_v$. For $\phi_v \in V_{\pi_v}$, define
\begin{equation}\label{e:J_vdeffinal}
 J_v(\phi_v ) =\frac{L(1, \pi_v, \Ad)L(1, \chi_{d,v})\int_{\Q_v^\times \bs T(\Q_v)}\int_{N(\Q_v)}\frac{\langle \pi_v(t_vn_v) \phi_v , \phi_v \rangle}{\langle \phi_v , \phi_v\rangle} \Lambda_v^{-1}(t_v) \theta_S^{-1}(n_v)\,dn_v\,dt_v}{\zeta_{\Q_v}(2)\zeta_{\Q_v}(4)L(1/2, \pi_v \otimes \AI(\Lambda_v^{-1}))}.\end{equation}
Strictly speaking, the integral above may not converge absolutely, in which case one defines it via regularization (see~\cite[p.~6]{yifengliu}). It can be shown that $J_v(\phi_v )=1$ for almost all places. We now state the refined conjecture as phrased by Liu~\cite{yifengliu}.

\begin{conjecture}[Yifeng Liu]\label{c:liu}
 Let $\pi$, $\Lambda$ be as above. Suppose that for almost all places $v$ of $\Q$ the local representation $\pi_v$ is generic. Let $\phi$ be an automorphic form in the space of~$\pi$ corresponding to $\otimes_v \phi_v$. Then
 \begin{equation}\label{e:refggpnew}
  \frac{|B(\phi, \Lambda)|^2}{\langle \phi, \phi \rangle} = \frac{C_T}{S_\pi}\frac{\zeta_\Q(2)\zeta_\Q(4)L(1/2, \pi \times \AI(\Lambda^{-1}))}{L(1, \pi, \Ad)L(1, \chi_{-D})} \prod_v J_v(\phi_v ),
 \end{equation}
 where $\zeta_\Q(s) = \pi^{-s/2}\Gamma(s/2)\zeta(s)$ denotes the completed Riemann zeta function, $C_T$ is a constant relating our choice of local and global Haar measures, and $S_\pi$ denotes a certain integral power of 2, related to the Arthur parameter of $\pi$. In particular,
 $$
  S_\pi = \begin{cases} 4 &\text{ if } \pi \text{ is endoscopic,} \\ 2 &\text{ if }\pi \text{ is of general type.} \end{cases}
 $$
\end{conjecture}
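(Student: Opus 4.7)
The approach I would take follows the strategy developed by Furusawa and Morimoto, reducing \eqref{e:refggpnew} to two separately tractable identities. The first conceptual step is to exploit the exceptional isomorphism $\PGSp_4 \cong \SO(5)$, under which $B(\phi, \Lambda)$ becomes the Gan--Gross--Prasad period for the pair $(\SO(5), \SO(2))$, with the $\SO(2)$ factor identified with the norm-one subgroup of $L/\Q$ arising from $T_S/Z$. In this guise \eqref{e:refggpnew} is the refined GGP conjecture for this pair, for which a global strategy via theta correspondence is available.

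Concretely, I would set up a theta seesaw involving the dual pairs $(\SO(5), \Sp(2))$ and $(\SO(2), \Sp(4))$. Using a Fourier--Jacobi expansion of $\phi$ along the Siegel unipotent $N$ together with the seesaw identity, one rewrites $|B(\phi,\Lambda)|^2$ as an inner product of two theta lifts from $\SO(2)$ to the metaplectic cover $\widetilde{\Sp}(4)$, built from the character $\Lambda$. One then applies the Rallis inner product formula to this theta lift, which evaluates the square of the period as a product of local zeta integrals times the ratio
\[
 \frac{L(1/2, \pi \times \AI(\Lambda^{-1}))}{L(1, \pi, \Ad)\,L(1, \chi_{-D})},
\]
precisely matching the right-hand side of \eqref{e:refggpnew} up to the local factors $J_v(\phi_v)$ and the constant $S_\pi/C_T$. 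The local factors $J_v$ then emerge as the natural local components of the Rallis unfolding, after dividing by the normalization factor $\zeta_{\Q_v}(2)\zeta_{\Q_v}(4)L(1/2, \pi_v \times \AI(\Lambda_v^{-1}))$ appearing in \eqref{e:J_vdeffinal}.

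The second ingredient is an independent formula for $\langle \phi, \phi\rangle$ that produces the denominator $L(1,\pi,\Ad)$ with the correct local normalization. This is precisely the Lapid--Mao formula (Conjecture 4.1), whose proof by Rankin--Selberg unfolding on $\GSp_4 \times \GSp_4$ is the second pillar of the argument. Combining the Rallis formula output with the Lapid--Mao expression for the Petersson norm converts the formula to the normalization appearing on the left of \eqref{e:refggpnew}. The constant $S_\pi$ then arises from Arthur's multiplicity formula: for $\pi$ of general type the multiplicity inside its $L$-packet is one, giving $S_\pi = 2$, while for endoscopic $\pi$ the component group contributes an additional factor of two, yielding $S_\pi = 4$.

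The principal obstacles are threefold. First, one must justify convergence and regularization of the Bessel integral and of the stable integral appearing in \eqref{e:J_vdeffinal}; this typically requires tempered-ness of $\pi$ as an input, which in turn relies on the Ramanujan conjecture for the relevant class of representations (recently established for $\pi_\infty$ in the discrete series by Furusawa--Morimoto). Second, the precise matching of local Haar measures, which determines $C_T$, must be tracked consistently through the seesaw, the theta integration, and the Rallis unfolding. Finally, the archimedean local factor is the most delicate: the local theta correspondence and the matching of matrix coefficients of discrete series representations with Rallis-type local zeta integrals require explicit computations that are substantially harder than the $p$-adic case and constitute the technical heart of the argument.
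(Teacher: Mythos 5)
This statement is labeled as a \emph{conjecture} in the paper (due to Yifeng Liu), and the paper offers no proof of it: it simply records that Furusawa and Morimoto \cite{FM21, FM22} have established it for tempered $\pi$, and then uses that result as an input in Theorem \ref{Bessel-per-thm}. So there is no internal proof to compare against, and your text should not be read as supplying one either. What you have written is a roadmap, not a proof: every genuinely hard step is named and then deferred. The reduction of the Bessel period to a Whittaker-type period via theta correspondence (in Furusawa--Morimoto's actual argument, the Bessel period on $\SO(5)$ is related to the Whittaker period on the metaplectic group $\widetilde{\Sp}(4)$, and the metaplectic analogue of the Lapid--Mao formula \cite{LM17} is then invoked, rather than the $\GSp_4$ Lapid--Mao formula for $\langle\phi,\phi\rangle$ as you propose) requires precise local theta correspondence results, nonvanishing statements, and the identification of the doubling/standard $L$-function produced by the Rallis inner product formula with the factor $L(1/2,\pi\times\AI(\Lambda^{-1}))$; none of this is carried out. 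Likewise, asserting that $S_\pi$ ``arises from Arthur's multiplicity formula'' is a heuristic, not a derivation of the dichotomy $S_\pi\in\{2,4\}$.

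Concretely, the gaps are: (i) the seesaw you write down, with dual pairs $(\SO(5),\Sp(2))$ and $(\SO(2),\Sp(4))$, is not set up carefully enough to see that the unfolded period actually equals $|B(\phi,\Lambda)|^2$ with the stated measure constant $C_T$; (ii) the passage from the Rallis inner product formula (which yields a standard $L$-value for the theta lift) to the spin $L$-function of $\pi$ twisted by $\AI(\Lambda^{-1})$ is asserted, not proved; (iii) the regularization of the local integrals $J_v(\phi_v)$ and the archimedean computation are acknowledged as ``the technical heart'' and left open. Since the paper itself treats this as an external theorem of Furusawa--Morimoto in the tempered case, the correct stance here is to cite \cite{FM21, FM22} rather than to attempt a self-contained argument; if you do want to reconstruct their proof, you would need to supply all of (i)--(iii) in detail.
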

 Recently Furusawa and Morimoto proved Conjecture \ref{c:liu} for tempered $\pi$; see \cite{FM21}, \cite{FM22}.

For several applications it is important  to have an explicit formula for the right hand side of (\ref{e:refggp}), which amounts to computing the local integrals $J_v(\phi_v )$ for appropriate choices of $\phi_v$. In \cite{DPSS15}, we computed $J_v(\phi_v)$ if $v$ is non-archimedean and $\pi_v$ has a non-zero $P_1$-fixed vector $\phi_v$, and if $v = \infty$ and $\pi_\infty$ is a holomorphic discrete series representation with minimal scalar $K_\infty$ type. Here, the congruence subgroup $P_1$ is given by
\begin{equation}\label{P1-defn}
P_1 := \{ g = \mat{A}{B}{C}{D} \in G(\Z_p) : C \equiv 0 \pmod{p}\}.
\end{equation}
In \cite{FM22}, the authors extend the explicit computations to include general holomorphic discrete series representations $\pi_\infty$ with not necessarily minimal scalar $K_\infty$ type. Below, we use the local computation from Sect.~\ref{locbessec} to obtain the explicit formula when we allow $\pi_v$ to be a simple supercuspidal representation.
\begin{theorem}\label{Bessel-per-thm}
 Let $L$ be an imaginary quadratic extension of $\Q$ with discriminant $-D$, and let $\Lambda$ be a character of $L^\times \bs \A_L^\times$ that is trivial on $\A^\times$.   Furthermore, assume that $\Lambda_\infty$ is trivial. Let $\pi\simeq\otimes_v \pi_v$ be an irreducible, unitary, cuspidal, non-CAP automorphic representation of $G(\A)$ with trivial central character and let $\phi$ be a cusp form in the space of $\pi$ corresponding to $\otimes_v \phi_v$. Let $S$ be a finite (possibly empty) set of odd prime numbers.

 We assume that $\pi$ and $\phi$ satisfy the following conditions.
\begin{itemize}
\item For each $p \in S$, $L_p$ is an inert field extension of $\Q_p$,  $\pi_p$ is a simple supercuspidal representation of $G(\Q_p)$ and $\Lambda_p$ satisfies the property that $m_p := {\rm min}\{ m \geq 0 : \Lambda_p|_{1+p^m\Z_p[\sqrt{-D}]} = 1\} \geq 2$. Moreover, let $u_p$ be defined by (\ref{u0def}), $\alpha_p = p^{1-m_p}u_p$, $\beta_p = p$, $A_p = \mat{}{1}{1}{}$ if $4 | D$,  $A_p = \mat{-1/2}{1}{1}{}$ if $p \nmid D$ and set $g_p = \mat{A_p}{}{}{A_p'} d_{\alpha_p, \beta_p}^{-1}$. Then
 $\phi_p$ is the translate of the minimal vector $\phi_{\rm min}$ in the space of $\pi_p$ by the matrix $g_p$.
\item The representation $\pi_\infty$ has scalar minimal $K_\infty$ type $(k, k)$ with $k \geq 2$. The vector $\phi_\infty$ spans this one-dimensional $K_\infty$ type. If $k=2$, then $\pi$ is tempered.
\item For $p \notin S$, a finite prime, the representation $\pi_p$ is unramified, and $\phi_p$ is the unique, up to scalars, spherical vector in $\pi_p$.
\end{itemize}
Then
$$ \frac{|B(\phi, \Lambda)|^2}{\langle \phi, \phi \rangle}  = e^{-4\pi {\rm Tr}(S)} D^{k-2} 2^{2k-c} \frac{\zeta_\Q(2)\zeta_\Q(4)L^S(1/2, \pi \times \AI(\Lambda^{-1}))}{L^S(1, \pi, \Ad)L(1, \chi_{-D})^2} \prod\limits_{p \in S} J_p,$$
where $c = 1$ if $\pi$ is endoscopic, and $c=0$ if $\pi$ is of general type. For $p \in S$, we have
$$J_p = \frac{(1-p^{-1})(1-p^{-4})}{1+p^{-1}}p^{-4m_p+7}.$$
\end{theorem}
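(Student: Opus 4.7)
The strategy is to invoke the refined Gan--Gross--Prasad identity of Conjecture~\ref{c:liu} (now a theorem of Furusawa--Morimoto in the tempered case, see \cite{FM21, FM22}) and compute each local factor $J_v(\phi_v)$ explicitly. Under our hypotheses on $\pi_\infty$ (holomorphic discrete series of scalar minimal $K_\infty$-type $(k,k)$ with $k \geq 2$, together with the assumption of temperedness when $k=2$) and the non-CAP assumption, $\pi$ is globally tempered, so the Furusawa--Morimoto theorem applies. I then split the product over places into three regions and evaluate each one.

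At unramified finite places $v \notin S \cup \{\infty\}$, the standard unramified computation yields $J_v(\phi_v) = 1$. At the archimedean place, $J_\infty(\phi_\infty)$ for a holomorphic discrete series of scalar minimal $K_\infty$-type $(k,k)$ with $\phi_\infty$ the lowest weight vector was computed in \cite{FM22}, extending the scalar weight $k=2$ case of \cite{DPSS15}; this produces the factor $e^{-4\pi\,\mathrm{Tr}(S)} D^{k-2} 2^{2k}$, adjusted by the twist captured by~$c$.

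For each $p \in S$, I compute $J_p(\phi_p)$ using that $\pi_p$ is simple supercuspidal, whence $L(s, \pi_p, \Ad) = 1$. Since $L_p/\Q_p$ is inert at an odd prime and $\Lambda_p$ is trivial on $\Q_p^\times$, one has $\Lambda_p^\sigma = \Lambda_p^{-1}$, and since $1 + p \Z_p[\sqrt{-D}]$ is pro-$p$, the hypothesis $m_p \geq 2$ forces $\Lambda_p \neq \Lambda_p^{-1}$ and $\Lambda_p$ to be ramified on $\OF_{L_p}^\times$. Thus $\AI(\Lambda_p^{-1})$ is a supercuspidal representation of $\GL_2(\Q_p)$; combined with $\pi_p$ being simple supercuspidal, standard results give $L(s, \pi_p \times \AI(\Lambda_p^{-1})) = 1$. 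We also have $L(1, \chi_{-D,p}) = (1+p^{-1})^{-1}$ and $\zeta_{\Q_p}(2)\zeta_{\Q_p}(4) = (1-p^{-2})^{-1}(1-p^{-4})^{-1}$. To evaluate the Bessel integral in the numerator of $J_p(\phi_p)$, I apply the transformation formula~\eqref{Btrafoeq} with the matrix $A_p$; in both branches of the case analysis one checks $\,^t\!A_p S A_p = \mathrm{diag}(D/4, 1) =: \tilde S$ and $\det(A_p) = -1$, so the prefactor $|\lambda \det(A)|^3$ is trivial. Proposition~\ref{Bessel-model-prop} (applied with $a = D/4$ and odd residue characteristic; the condition $2m_p - 3 \geq v_p(D/4)$ follows from $m_p \geq 2$ and $p \nmid D/4$ if $4\mid D$, and from a direct check otherwise) then yields $B_{\Lambda_p, \theta_{\tilde S}}(d_{\alpha_p, \beta_p}^{-1} \cdot f^\pm_{\mathrm{min}}) = p^{-4m_p + 7}$.

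The main obstacle is the reconciliation of measure normalizations. Conjecture~\ref{c:liu} is stated using local Tamagawa measures on $F_v^\times \bs T_S(F_v)$, while Proposition~\ref{Bessel-model-prop} normalizes the measure so that the image of $1 + \OF_L$ has volume $1$; as noted in the remark following that proposition, the volume of $\OF^\times \bs \OF_L^\times$ in the latter normalization is $\zeta_F(1)/\zeta_F(2) = 1 + p^{-1}$. Comparing against the Tamagawa normalization produces precisely the $(1+p^{-1})^{-1}$ appearing in the denominator of the final expression for $J_p$. One must also verify that the global constant $C_T/S_\pi$ in Conjecture~\ref{c:liu} matches the expected shape, with $S_\pi = 2^{c+1}$ accounting for the exponent of~$2$ in the final identity. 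Assembling the archimedean factor, the unramified contribution, and the explicit $J_p$ at each $p \in S$ then yields the stated identity.
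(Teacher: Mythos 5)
Your proof follows the same strategy as the paper's: invoke Furusawa--Morimoto's theorem proving Conjecture~\ref{c:liu} in the tempered case (with temperedness secured by the non-CAP hypothesis and the weight assumption), insert the computed $J_\infty$, and evaluate each $J_p$ for $p \in S$ via Proposition~\ref{Bessel-model-prop} and the measure comparison. One small correction: the archimedean factor $J_\infty$ for scalar minimal $K_\infty$-type $(k,k)$ (arbitrary $k\geq 2$) was already computed in \cite[Sec.~3.5]{DPSS15}, which is what the paper actually cites; \cite{FM22} extended that computation to holomorphic discrete series with non-minimal scalar $K_\infty$-type, so your attribution is slightly off, though this does not affect the validity of the argument.
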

\begin{proof}
For the case that $\pi$ is tempered, the conjectural formula (\ref{e:refggp}) is proven in Theorem~1.2 of \cite{FM22}. For $k > 3$, it is known that a non-CAP $\pi$ is tempered by Proposition~8.1 of \cite{FM22}. The value of $J_\infty$ has been computed in \cite[Sec. 3.5]{DPSS15} and in particular we have $$C_TJ_\infty = \frac{2^{2k+1}D^{k-2} e^{-4\pi {\rm Tr}(S)}}{L(1, \chi_{-D})}.$$ The values of $J_p$ for $p \in S$ follow from Proposition \ref{Bessel-model-prop}, the remark about the volume of $\OF^\times \bs \OF_L^\times$ before that proposition, and \eqref{e:J_vdeffinal}. Now the theorem follows by substituting these quantities into (\ref{e:refggp}).
\end{proof}

\bibliography{simple_analytic}{}
\bibliographystyle{plain}

\end{document}